\newtheorem{theorem}{Theorem}[section]
\newtheorem*{maintheorem}{Main Theorem}
\newtheorem*{maintheorem2}{Main Theorem (version two)}
\newtheorem*{thm}{Theorem~\ref{thm:external}}
\newtheorem*{thm2}{Theorem~\ref{thm:ch}}
\newtheorem*{thm3}{Theorem~\ref{thm:MA}}
\newtheorem*{thm4}{Theorem~\ref{thm:BS}}
\newtheorem{lemma}[theorem]{Lemma}
\newtheorem{corollary}[theorem]{Corollary}
\newtheorem{proposition}[theorem]{Proposition}
\newtheorem{question}[theorem]{Question}
\theoremstyle{definition}
\newtheorem{example}[theorem]{Example}
\theoremstyle{remark}
\newtheorem*{sub-claim}{sub-claim}
\newcommand{\Q}{\mathbb{Q}}
\newcommand{\N}{\mathbb{N}}
\newcommand{\Z}{\mathbb{Z}}
\newcommand{\explicitSet}[1]{\left\lbrace #1 \right\rbrace}
\newcommand{\brackets}[1]{\left\langle #1 \right\rangle}
\newcommand{\set}[2]{\explicitSet{#1 \colon #2}}
\newcommand{\seq}[2]{\brackets{#1 \colon #2}}
\newcommand{\<}{\langle}
\renewcommand{\>}{\rangle}
\newcommand{\0}{\emptyset}
\renewcommand{\a}{\alpha}
\renewcommand{\b}{\beta}
\newcommand{\g}{\gamma}
\renewcommand{\k}{\kappa}
\newcommand{\s}{\sigma}
\newcommand{\e}{\varepsilon}
\newcommand{\dlt}{\delta}
\newcommand{\w}{\omega}
\newcommand{\sub}{\subseteq}
\newcommand{\rest}{\!\restriction\!}
\newcommand{\closure}[1]{\overline{#1}}
\newcommand{\U}{\mathcal{U}}
\newcommand{\V}{\mathcal{V}}
\newcommand{\plim}{p\mbox{-}\!\lim_{n \in \w}}
\newcommand{\splim}{\s(p)\mbox{-}\!\lim_{n \in \w}}
\newcommand{\p}{\mathbb{P}}
\newcommand{\pwmf}{\mathcal{P}(\w)/\mathrm{fin}}
\newcommand{\continuum}{\mathfrak{c}}
\newcommand{\pseudo}{\mathfrak{p}}
\newcommand{\ch}{\mathrm{CH}}
\newcommand{\zfc}{\mathrm{ZFC}}
\newcommand{\ocama}{\mathrm{OCA+MA}}
\newcommand{\mask}{\mathrm{MA}_\k(\s\text{-centered})}
\begin{document}

\title{Shift-preserving maps on $\w^*$}
\author{Will Brian}
\address {
William R. Brian\\
Department of Mathematics\\
Baylor University\\
One Bear Place \#97328\\
Waco, TX 76798-7328}
\email{wbrian.math@gmail.com}
\subjclass[2010]{Primary: 54H20. Secondary: 03C98, 03E35, 37B05, 54G05}
\keywords{Stone-\v{C}ech compactification, shift map, Parovi\v{c}enko's theorem, abstract omega-limit sets, weak incompressibility, Continuum Hypothesis, elementary submodels, Martin's Axiom}

\thanks{$*$ The author would like to thank Brian Raines and Alan Dow for helpful conversations, and Brian especially for patiently listening to several preliminary (and wrong) ideas for proving the main theorem of this paper.}

\begin{abstract}
The shift map $\s$ on $\w^*$ is the continuous self-map of $\w^*$ induced by the function $n \mapsto n+1$ on $\w$. Given a compact Hausdorff space $X$ and a continuous function $f: X \to X$, we say that $(X,f)$ is a quotient of $(\w^*,\s)$ whenever there is a continuous surjection $Q: \w^* \to X$ such that $Q \circ \s = \s \circ f$.

Our main theorem states that if the weight of $X$ is at most $\aleph_1$, then $(X,f)$ is a quotient of $(\w^*,\s)$ if and only if $f$ is weakly incompressible (which means that no nontrivial open $U \sub X$ has $f(\closure{U}) \sub U$). Under $\ch$, this gives a complete characterization of the quotients of $(\w^*,\s)$ and implies, for example, that $(\w^*,\s^{-1})$ is a quotient of $(\w^*,\s)$.

In the language of topological dynamics, our theorem states that a dynamical system of weight $\aleph_1$ is an abstract $\w$-limit set if and only if it is weakly incompressible.

We complement these results by proving $(1)$ our main theorem remains true when $\aleph_1$ is replaced by any $\k < \pseudo$, $(2)$ consistently, the theorem becomes false if we replace $\aleph_1$ by $\aleph_2$, and $(3)$ $\ocama$ implies that $(\w^*,\s^{-1})$ is not a quotient of $(\w^*,\s)$.

\end{abstract}

\maketitle

\section{Introduction}

In \cite{Par}, Parovi\v{c}enko proved that every compact Hausdorff space of weight $\aleph_1$ is a continuous image of $\w^* = \b\w - \w$. In this paper we prove an analogous result concerning the continuous maps on $\w^*$ that respect the shift map.

The \emph{shift map} $\s: \b\w \to \b\w$ sends an ultrafilter $p$ to the unique ultrafilter generated by $\set{A+1}{A \in p}$. Equivalently, $\s$ is the unique map on $\b\w$ that continuously extends the map $n \mapsto n+1$ on $\w$. The shift map restricts to an autohomeomorphism of $\w^*$.

If $X$ is a compact Hausdorff space and $f: X \to X$ is continuous, we say that $(X,f)$ is a \emph{quotient} of $(\w^*,\s)$ whenever there is a continuous surjection $Q: \w^* \to X$ such that $Q \circ \s = f \circ Q$. The main theorem of this paper characterizes the quotients of $(\w^*,\s)$ that have weight at most $\aleph_1$:

\begin{maintheorem}
Suppose $X$ is a compact Hausdorff space with weight at most $\aleph_1$, and $f: X \to X$ is continuous. Then $(X,f)$ is a quotient of $(\w^*,\s)$ if and only if $f$ is weakly incompressible.
\end{maintheorem}

Recall that $f: X \to X$ is \emph{weakly incompressible} if for any open $U \sub X$ with $\0 \neq U \neq X$, we have $f(\closure{U}) \not\sub U$. This theorem is the appropriate analogue of Parovi\v{c}enko's because $(\w^*,\s)$ is itself weakly incompressible, and this property is always preserved by taking quotients. In other words, our theorem isolates a property of the shift map that determines exactly when Parovi\v{c}enko's topological result extends to a result of dynamics.

\subsection*{Connection with topological dynamics}

A \emph{dynamical system} is a pair $(X,f)$, where $X$ is a compact Hausdorff space and $f: X \to X$ is continuous. For example, $(\w^*,\s)$ is a dynamical system, and our main theorem states that it is universal (in the ``mapping onto'' sense) for all weakly incompressible dynamical systems of weight $\leq \aleph_1$.

Given a point $x \in X$, the $\w$\emph{-limit set of} $x$ is the set of all limit points of the orbit of $x$:
$$\w_f(x) = \bigcap_{n \in \w}\closure{\set{f^m(x)}{m \geq n}}.$$
It is easy to see that $\w_f(x)$ is closed under $f$, so that $(\w_f(x),f)$ is itself a dynamical system.

Recall that two dynamical systems $(X,f)$ and $(Y,g)$ are \emph{isomorphic} (or, for some authors, \emph{conjugate}) if there is a homeomorphism $H: X \to Y$ with $H \circ f = g \circ H$. An \emph{abstract $\w$-limit set} is a dynamical system that is isomorphic to a dynamical system of the form $(\w_f(x),f)$.

For example, $(\w^*,\s)$ is an abstract $\w$-limit set because $\w^* = \w_\s(n)$ for any $n \in \w$ in the larger dynamical system $(\b\w,\s)$. Notice that $\w^*$ is not an $\w$-limit set ``internally''; that is, $\w^* \neq \w_\s(p)$ for any $p \in \w^*$ (indeed, $\w^*$ is not even separable). In order to realize $(\w^*,\s)$ as an $\w$-limit set, it is necessary to extend it to a larger dynamical system.

In the next section, we will prove the following characterization of abstract $\w$-limit sets:

\begin{thm}
$(X,f)$ is an abstract $\w$-limit set if and only if it is a quotient of $(\w^*,\s)$.
\end{thm}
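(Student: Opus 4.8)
The plan is to prove both directions by moving between a dynamical system and a suitable enlargement that exhibits it as an $\w$-limit set. For the easier direction, suppose $(X,f)$ is a quotient of $(\w^*,\s)$, witnessed by a continuous surjection $Q\colon \w^* \to X$ with $Q \circ \s = f \circ Q$. I would extend $Q$ to a map on $\b\w$: since $\b\w$ is the Stone--\v{C}ech compactification of $\w$, the restriction $Q \rest \w$ lands in $X$ (after identifying $\w$ with any countable subset of $\w^*$, e.g. $\set{\s^n(p)}{n \in \w}$ for a fixed $p \in \w^*$) and extends uniquely to $\widehat{Q}\colon \b\w \to X$. The point is to choose the copy of $\w$ inside $\b\w$ so that the shift on indices corresponds to $\s$; then $\widehat{Q}$ intertwines $\s$ on $\b\w$ with $f$ on $X$, and one checks that $X = \widehat{Q}(\w^*) = \widehat{Q}(\w_\s(0)) = \w_f(\widehat{Q}(0))$, using that $\widehat{Q}$ is a closed map and commutes with $\s$. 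Hence $(X,f)$ is an abstract $\w$-limit set.

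For the converse, suppose $(X,f)$ is an abstract $\w$-limit set; fix a dynamical system $(Y,g)$ with $X = \w_g(y)$ for some $y \in Y$, and $f = g \rest X$. The orbit $\set{g^n(y)}{n \in \w}$ is a countable (possibly not discrete) subset of $Y$ whose closure contains $X$. I would use the universal property of $\b\w$: the map $n \mapsto g^n(y)$ from $\w$ into $Y$ extends to a continuous map $e\colon \b\w \to \closure{\set{g^n(y)}{n \in \w}} \sub Y$, and by construction $e \circ \s = g \circ e$ on $\w$, hence on all of $\b\w$ by density and continuity. Now $e(\w^*)$ is a closed $\s$-invariant subset of $Y$; the key point is to identify it with $\w_g(y) = X$. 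One inclusion is immediate since $e(\w^*) \sub \closure{\set{g^n(y)}{n \geq m}}$ for every $m$ (as $\w \setminus m$ is cofinite, its closure in $\b\w$ contains $\w^*$), so $e(\w^*) \sub \w_g(y)$. For the reverse inclusion, any point of $\w_g(y)$ is a limit of a net drawn from tails of the orbit, hence lies in $e(\w^*)$ since $e$ is onto the closure of the orbit and $\w^*$ is exactly the set of limits of the index tails. Thus $e \rest \w^*\colon \w^* \to X$ is a continuous surjection intertwining $\s$ and $f$, so $(X,f)$ is a quotient of $(\w^*,\s)$.

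The main obstacle I anticipate is bookkeeping the two different ``copies of $\w$'' carefully. In the first direction, $\w^*$ is not internally an $\w$-limit set, so the enlargement to $\b\w$ must be arranged so that the chosen countable set has its index-shift matching $\s$; getting $\widehat{Q} \circ \s = f \circ \widehat{Q}$ on all of $\b\w$ (not just on the copy of $\w$) requires invoking density plus the fact that $\b\w \setminus \w^*$ is dense, or equivalently that the equation holds on a dense set and both sides are continuous. In the second direction, the subtlety is that the orbit of $y$ need not be discrete, so $e$ need not be injective on $\w$ and $\closure{\set{g^n(y)}{n}}$ need not be homeomorphic to $\b\w$; but this does not matter, since I only need $e$ to be a \emph{surjection} onto $X$ after restricting to $\w^*$, and that follows from the description of $\w_g(y)$ as the intersection of closures of tails together with the corresponding description of $\w^* = \bigcap_{m}\closure{\w \setminus m}$ inside $\b\w$. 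Once these identifications are pinned down, both directions are short.
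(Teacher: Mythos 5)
Your second direction (abstract $\w$-limit set $\Rightarrow$ quotient) is fine and is essentially the standard argument the paper cites: the Stone extension of $n \mapsto g^n(y)$ maps $\w^*$ onto $\bigcap_{m}\closure{\set{g^n(y)}{n \geq m}} = \w_g(y) = X$ and intertwines $\s$ with $g$, hence with $f$ on the image.

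The other direction, however, has a genuine gap, and it is the direction carrying the real content of the theorem. Your $\widehat{Q}$ is the Stone extension of $n \mapsto Q(\s^n(p))$ and takes values in $X$; its restriction to $\w^*$ has image exactly $\w_f(Q(p))$, computed inside $(X,f)$. So the claimed equality $X = \widehat{Q}(\w^*)$ asserts that $X$ is an $\w$-limit set of one of its own points, which is false in general: take $(X,f) = (\w^*,\s)$ with $Q$ the identity; then $\w_\s(p) \neq \w^*$ for every $p \in \w^*$ (as the paper notes, $\w^*$ is not even separable), yet $(\w^*,\s)$ is trivially a quotient of itself. You flag the right worry in your closing paragraph --- that $\w^*$ is not internally an $\w$-limit set --- but enlarging the domain from $\w^*$ to $\b\w$ does not address it; what must be enlarged is the \emph{target} space. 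The paper's proof does exactly this: by Lemma~\ref{lem:metricextensions} the quotient map $q: \w^* \to X$ extends to $Q: \b\w \to Y$ for a compact Hausdorff $Y \supseteq X$ in which $Q \rest \w$ is injective and $Q(\w)$ is open, relatively discrete, and disjoint from $X$; one then defines $g$ on $Y$ by $g \rest X = f$ and $g(Q(n)) = Q(n+1)$, checks continuity of $g$ via Lemma~\ref{lem:continuity}, and verifies $X = \w_g(Q(0))$, using $Q(\w^*) = X$ for one inclusion and the openness and discreteness of $Q(\w)$ for the other. Without adjoining those new points outside $X$ to serve as the accumulating orbit, there is no candidate point whose tails accumulate on all of $X$, and your argument cannot be repaired within $X$ itself.
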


In other words, $(\w^*,\s)$ is universal among all abstract $\w$-limit sets. Theorem~\ref{thm:external} is one of the motivations for our study of the quotients of $(\w^*,\s)$: it is the study of the internal structure of $\w$-limit sets.


Theorem~\ref{thm:external} allows us to rephrase our main theorem as follows:
\begin{maintheorem2}
Suppose $(X,f)$ is a dynamical system and the weight of $X$ is at most $\aleph_1$. $(X,f)$ is an abstract $\w$-limit set if and only if $f$ is weakly incompressible.
\end{maintheorem2}
This way of stating the main theorem reveals it as an extension of the following well-known result of Bowen and Sharkovsky:

\begin{thm4}
A metrizable dynamical system is an abstract $\w$-limit set if and only if it is weakly incompressible.
\end{thm4}

Sharkovsky proves the forward direction in \cite{Srk} and Bowen proves the converse in \cite{Bwn}. We will give a slightly different proof below, because we will require a mild strengthening of this theorem (Corollary~\ref{cor:BS}) to prove our main result. See \cite{BGOR} or \cite{M&R}, and the references therein, for further research on the connection between weak incompressibility and $\w$-limit sets.

\subsection*{Outline of the proof}

Of the various proofs of Parovi\v{c}enko's theorem, ours is closest in spirit to that of B{\l}aszczyk and Szyma\'nski in \cite{B&S}. Their proof begins by writing a given compact Hausdorff space $X$ as a length-$\omega_1$ inverse limit of compact metrizable spaces: $X = \varprojlim \seq{X_\a}{\a < \w_1}$. They then construct a coherent transfinite sequence of continuous surjections $Q_\a: \w^* \to X_\a$, and define $Q: \w^* \to X$ to be the inverse limit of this sequence. The $Q_\a$ are constructed recursively, using a variant of the following lifting lemma at successor stages:

\begin{lemma}\label{lem:easylift}
Let $Y$ and $Z$ be compact metrizable spaces, and let $Q_Z: \omega^* \to Z$ and $\pi: Y \to Z$ be continuous surjections. Then there is a continuous surjection $Q_Y: \w^* \to Y$ such that $Q_Z = \pi \circ Q_Y$.
\end{lemma}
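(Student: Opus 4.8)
The plan is to exploit the well-known fact that $\w^*$ is characterized among compact spaces by Parovi\v{c}enko-type properties — in particular, that every continuous surjection from $\w^*$ onto a compact metrizable space is ``as flexible as possible.'' Concretely, I would first reduce to the case where $Y \subseteq Z \times K$ is a closed subspace, with $K$ a fixed metrizable space (say the Hilbert cube, or just $2^\w$ if $Y$ is zero-dimensional) and $\pi$ the restriction of the first-coordinate projection; this is harmless since any compact metrizable $Y$ embeds in $Z \times K$ in a way compatible with a given map to $Z$, by embedding $Y$ in $K$ and taking the diagonal of $\pi$ with that embedding. So it suffices to lift $Q_Z$ through the projection $Z \times K \supseteq Y \to Z$.

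The heart of the argument is then the following: given the surjection $Q_Z : \w^* \to Z$, I want to find a continuous map $R : \w^* \to K$ so that the combined map $x \mapsto (Q_Z(x), R(x))$ lands inside $Y$ and is \emph{onto} $Y$. For the ``lands inside $Y$'' part, note that for each $x \in \w^*$ the fiber $Y_{Q_Z(x)} = \{k \in K : (Q_Z(x),k) \in Y\}$ is a nonempty closed subset of $K$, and these vary upper-semicontinuously; I can cover $\w^*$ by finitely many clopen sets on which $Q_Z$ has small oscillation and choose values of $R$ approximately, then refine. The cleanest implementation is to build $R$ as a limit of finer and finer ``clopen-valued'' approximations: since $\w^*$ is an F-space with no isolated points and every clopen partition of $\w^*$ can be refined arbitrarily (as $\w^*$ maps onto $2^\w$ fixing the $Q_Z$-structure — here is where one uses that clopen subsets of $\w^*$ are themselves copies of $\w^*$), I can carry out a standard back-and-forth / recursive construction producing a continuous $R$ whose graph over $Q_Z$ sits in $Y$. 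For surjectivity onto $Y$, I interleave into the recursion a bookkeeping that, for a fixed countable dense set $\{y_n : n \in \w\} \subseteq Y$, ensures each $y_n$ is hit: at stage $n$ one uses that $Q_Z^{-1}$ of a small neighborhood of $\pi(y_n)$ is a nonempty clopen piece of $\w^*$ (again a copy of $\w^*$), so there is room to steer $R$ on a sub-piece toward the second coordinate of $y_n$; density of $\{y_n\}$ together with compactness of $Y$ and $\w^*$ then upgrades ``hits a dense set'' to ``onto.''

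The step I expect to be the main obstacle is making the recursive construction of $R$ genuinely coherent — i.e., arranging simultaneously that (a) the partial approximations keep the graph inside $Y$, (b) they converge to a \emph{continuous} function on all of $\w^*$ (not merely on a dense set), and (c) the surjectivity bookkeeping never conflicts with (a). The tension is that forcing $R$ to take a prescribed value near some point pushes against the closed-fiber constraint from $Y$; resolving it cleanly requires choosing, at each stage, the refining clopen partition of $\w^*$ fine enough relative to a modulus of upper-semicontinuity of the fibers $Y_z$, and using the F-space property of $\w^*$ to guarantee that countably many such constraints can be met inside a single copy of $\w^*$. I would also remark that an alternative, more ``soft'' route is available: by Theorem~\ref{thm:external}-type reasoning or by a direct appeal to the projective-universality of $\w^*$ among compact spaces of weight $\leq \aleph_1$ (Parovi\v{c}enko), one can instead build $Q_Y$ directly as a surjection onto $Y$ and check post hoc that composing with $\pi$ yields $Q_Z$ up to an automorphism of $\w^*$ — but pinning down exactly $Q_Z$ (not merely something equivalent to it) is what forces the recursive approach above, so I would present that version.
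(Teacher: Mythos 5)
First, a point of reference: the paper never proves Lemma~\ref{lem:easylift}; it is quoted as the successor-step tool in B{\l}aszczyk and Szyma\'nski's proof of Parovi\v{c}enko's theorem, so your proposal has to be judged against the standard argument. Your global plan (embed $Y$ in $Z \times K$ so that $\pi$ becomes a coordinate projection, then produce a second coordinate $R$ with $Q_Y = (Q_Z,R)$, keeping a countable dense subset of $Y$ as surjectivity bookkeeping) is a reasonable frame, but the step you yourself flag as the main obstacle is a genuine gap, and the tools you invoke do not close it. Your recursion needs two things simultaneously: that on each piece of a fine clopen partition of $\w^*$ a value of $R$ can be chosen (approximately) compatible with the fibers $Y_z$ for all $z$ in the $Q_Z$-image of that piece, \emph{and} that refining the partition requires only small corrections, so the approximations converge uniformly to a continuous $R$ whose graph lies in $Y$ and can be steered toward the second coordinate of $y_n$ on a piece where $Q_Z$ is merely close to $\pi(y_n)$. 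Both demands require the fibers to vary \emph{lower} semicontinuously; upper semicontinuity (which is all you have, and which admits no useful ``modulus'' in the direction you need) does not prevent the fiber over a nearby point from being far from the value already chosen. Concretely, take $Z=K=[0,1]$ and $Y = (\{0\}\times[0,1]) \cup ([0,1]\times\{1\})$: a value chosen near $(0,\frac12)$ on a piece whose $Q_Z$-image contains points near $0$ must jump to height $1$ on any subpiece whose image misses $0$, so corrections are not small; worse, any genuine lift must take the value $(0,\frac12)$ on a clopen set $B^*$ where $B$ is a pseudo-intersection of the sets $\set{n}{g(n)<1/j}$ (for $g$ inducing $Q_Z$), and no finite-stage choice of ``a clopen piece mapping into a small neighborhood of $\pi(y_n)$'' can see this. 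The F-space property of $\w^*$ does no work here; what is needed is a diagonalization on $\w$.

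That diagonalization is exactly the standard proof, and it is the route the surrounding paper equips you for (Lemmas~\ref{lem:plimits} and~\ref{lem:metricextensions}). Embed $Z$ in the Hilbert cube and extend $Q_Z$ coordinatewise over $\b\w$ by Tietze to get $F:\b\w\to[0,1]^\w$; since $F(\w^*)=Z$, the points $F(n)$ accumulate only on $Z$, so one may choose $g(n)\in Z$ with $d(g(n),F(n))\to 0$, and then $Q_Z(p)=\plim g(n)$ for all $p\in\w^*$, while surjectivity of $Q_Z$ makes every tail of $\seq{g(n)}{n<\w}$ dense in $Z$. Fix a countable dense $\set{y_k}{k<\w}\sub Y$ and an enumeration $k(t)$ in which every $k$ recurs infinitely often; at step $t$ pick a fresh index $n_t$ larger than all previous ones with $d_Z(g(n_t),\pi(y_{k(t)}))<1/t$ and set $h(n_t)=y_{k(t)}$, and for every unassigned $n$ choose $h(n)\in\pi^{-1}(g(n))$ arbitrarily. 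Then $d_Z(\pi(h(n)),g(n))\to 0$, so $Q_Y(p):=\plim h(n)$ satisfies $\pi\circ Q_Y=Q_Z$; continuity is automatic for Stone extensions; and every tail of $\seq{h(n)}{n<\w}$ contains every $y_k$, so $Q_Y(\w^*)=\bigcap_{n}\closure{\set{h(m)}{m\ge n}}=Y$. This realizes precisely the flexibility your sketch was reaching for --- at the steering indices $h(n)$ need not lie over $g(n)$, only asymptotically so --- but it is obtained by exact pointwise choices on $\w$ rather than by a limit of clopen approximations on $\w^*$, which is where your write-up leaves the convergence and coherence issues unresolved.
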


In our situation, the first part of B{\l}aszczyk and Szyma\'nski's proof goes through: we prove in Corollary~\ref{cor:inverselimit} below that given a dynamical system $(X,f)$ of weight $\aleph_1$, one may always write $(X,f)$ as a length-$\w_1$ inverse limit of metrizable dynamical systems. However, we run into trouble with the analogue of Lemma~\ref{lem:easylift}: the analogous lemma for dynamical systems is false (see Example~\ref{ex:liftingproblems}).

To get around this problem, we modify B{\l}aszczyk and Szyma\'nski's approach by using sharper tools. Rather than beginning with $(X,f)$ and writing it as a topological inverse limit, we begin with a particular embedding of $X$ in $[0,1]^{\w_1}$ and use a much stronger form of inverse limit: a continuous chain of elementary submodels of a sufficiently large fragment of the set-theoretic universe. Each model in our chain naturally gives rise to a metrizable ``reflection'' of $(X,f)$, and the continuity requirement organizes these reflections into an inverse limit system with limit $(X,f)$. Elementarity gives this system strong structural properties, and ultimately is the key that unlocks a workable analogue of Lemma~\ref{lem:easylift}.

Our use of elementarity is inspired by the work of Dow and Hart in \cite{D&H}, where they prove that every continuum of weight $\aleph_1$ is a continuous image of $\mathbb{H}^*$, the Stone-\v{C}ech remainder of $\mathbb{H} = [0,\infty)$. They give three proofs of this fact, each of which relies on elementarity in some essential way. The proof of our main theorem is most similar to their third proof, found in Section 3 of \cite{D&H}.

In Section~\ref{sec:MA}, we will show that both Parovi\v{c}enko's theorem about continuous images of $\w^*$ and the Dow-Hart theorem about continuous images of $\mathbb H^*$ can be derived as relatively straightforward corollaries of our main theorem. In light of this, it is unsurprising that our proof uses some of the same ideas found in \cite{B&S} and \cite{D&H}.

\subsection*{Extensions and limitations}

Under the Continuum Hypothesis, our result gives a complete characterization of the quotients of $(\w^*,\s)$:

\begin{thm2}
Assuming $\ch$, the following are equivalent:
\begin{enumerate}
\item $(X,f)$ is a quotient of $(\w^*,\s)$.
\item $X$ has weight at most $\continuum$ and $f$ is weakly incompressible.
\item $X$ is a continuous image of $\w^*$ and $f$ is weakly incompressible.
\end{enumerate}
\end{thm2}

Every quotient of $(\w^*,\s)$ is weakly incompressible, so $(3)$ gives the most liberal possible characterization of quotients of $(\w^*,\s)$: they are simply the weakly incompressible dynamical systems for which the topology is not an obstruction.

In Section~\ref{sec:MA}, we show that the nontrivial conclusions of Theorem~\ref{thm:ch} are independent of $\zfc$. Specifically, we show that $(2)$ does not imply $(1)$ or $(3)$ in the Cohen model, and that $(3)$ does not imply $(1)$ under $\ocama$. In fact, we will show under $\ocama$ that $(\w^*,\s^{-1})$ is not a quotient of $(\w^*,\s)$, even though $\s^{-1}$ is weakly incompressible.

We also show in Section~\ref{sec:MA} that if $\k < \pseudo$ then our main theorem holds with $\k$ in the place of $\aleph_1$:
\begin{thm3}
If the weight of $X$ is less than $\pseudo$, then $(X,f)$ is a quotient of $(\w^*,\s)$ if and only if $f$ is weakly incompressible.
\end{thm3}
In the same way that our main theorem is the dynamical analogue of Parovi\v{c}enko's theorem, this result is the dynamical analogue of the following result of van Douwen and Przymusi\'nski from \cite{vDP}: \emph{If $X$ is a compact Hausdorff space with weight less than $\pseudo$, then $X$ is a continuous image of $\w^*$}.



\section{First steps}\label{sec:background}

\subsection*{Extending maps from $\w$ to $\b\w$}

If $X$ is a compact Hausdorff space and $f: \w \to X$ is any function, then there is a unique continuous function $\b f: \b\w \to X$ that extends $f$, the \emph{Stone extension} of $f$. For a sequence $\seq{x_n}{n \in \N}$ of points in $X$ and $p \in \b\w$, we will usually write $\plim x_n$ for the image of $p$ under the Stone extension of the function $n \mapsto x_n$. We will need the following facts about Stone extensions (proofs can be found in chapter 3 of \cite{H&S}):

\begin{lemma}\label{lem:plimits}
Let $X$ be a compact Hausdorff space and $\seq{x_n}{n < \w}$ a sequence of points in $X$.
\begin{enumerate}
\item $\plim x_n = y$ if and only if for every open $U \ni y$ we have $\set{n}{x_n \in U} \in p$
\item $p \mapsto \plim x_n$ is a continuous function $\b\w \to X$.
\item If $f: X \to X$ is continuous and $p \in \b\w$, then
$$\textstyle f \!\left( \plim x_n \right) = \plim f(x_n).$$
\item For each $p \in \b\w$, $\splim x_n = \plim x_{n+1}$.
\end{enumerate}
\end{lemma}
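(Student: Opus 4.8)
The plan is to derive all four items from the universal property of the Stone--\v{C}ech compactification: for a compact Hausdorff space $X$ and any function $g: \w \to X$ there is a \emph{unique} continuous $\b g: \b\w \to X$ extending $g$, and by definition $\plim x_n = \b g(p)$, where $g(n) = x_n$. Items (3) and (4) will then be instances of the slogan ``a continuous map composed with a Stone extension is again a Stone extension, by uniqueness''; item (1) is the basic topological description of what the point $\b g(p)$ actually is; and item (2) merely asserts that $p \mapsto \plim x_n$ \emph{is} the continuous map $\b g$, which holds by construction.

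I would prove (1) first. For the forward direction, assume $\b g(p) = y$ and let $U \ni y$ be open; then $(\b g)^{-1}(U)$ is open in $\b\w$ and contains $p$, and since the clopen sets $\widehat{A} = \set{q \in \b\w}{A \in q}$ with $A \in p$ form a neighborhood base at $p$, there is some $A \in p$ with $\widehat{A} \sub (\b g)^{-1}(U)$; in particular $A \sub \set{n}{x_n \in U}$, so $\set{n}{x_n \in U} \in p$. For the converse, suppose $\set{n}{x_n \in U} \in p$ for every open $U \ni y$, but $z := \b g(p) \neq y$. Using that $X$ is Hausdorff, pick disjoint open sets $U \ni y$ and $V \ni z$; the forward direction applied to $z$ gives $\set{n}{x_n \in V} \in p$, while the hypothesis gives $\set{n}{x_n \in U} \in p$, and these two sets are disjoint, contradicting that $p$ is a (proper) filter. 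Hence $\b g(p) = y$. Item (2) then needs no separate argument.

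Next I would handle (3) and (4) by uniqueness of the Stone extension. For (3): the composite $f \circ \b g: \b\w \to X$ is continuous and sends each $n \in \w$ to $f(x_n)$, so it coincides with the Stone extension of $n \mapsto f(x_n)$; evaluating at $p$ gives $f(\plim x_n) = \plim f(x_n)$. For (4): as noted in the introduction, $\s$ is by definition the Stone extension of the successor map on $\w$, so $\b g \circ \s: \b\w \to X$ is continuous and sends each $n$ to $x_{n+1}$, hence coincides with the Stone extension of $n \mapsto x_{n+1}$; evaluating at $p$ yields $\splim x_n = \plim x_{n+1}$. (Alternatively, (4) can be proved combinatorially from (1), using the description $\s(p) = \set{B \sub \w}{\set{n}{n+1 \in B} \in p}$ of the shifted ultrafilter.)

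I do not expect any serious obstacle: once the universal property of $\b\w$ is invoked, everything is routine. The only points demanding a little care are the two directions of (1) --- the converse genuinely uses Hausdorffness, to separate the two candidate limit points, together with the fact that $p$ is a proper filter --- and, in (3) and (4), remembering to appeal to the \emph{uniqueness} of the continuous extension rather than attempting a hands-on computation.
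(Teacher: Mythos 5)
Your proposal is correct: the paper itself gives no proof of Lemma~\ref{lem:plimits}, simply citing chapter 3 of \cite{H&S}, and your argument is exactly the standard one from that setting --- define $\plim x_n$ as $\b g(p)$ for the Stone extension $\b g$ of $n \mapsto x_n$, prove (1) from the clopen neighborhood base $\widehat{A}$, $A \in p$, together with Hausdorffness, note (2) is the continuity of $\b g$ by construction, and deduce (3) and (4) from the uniqueness of continuous extensions off the dense set $\w$ (using for (4) that $\s$ is itself the Stone extension of the successor map). No gaps; this matches what the cited reference provides.
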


\subsection*{Extending maps from $\w^*$ to $\b\w$}

The following folklore result is a fairly straightforward consequence of the Tietze Extension Theorem, or an alternative proof can be found in \cite{Eng}, Theorem 3.5.13.

\begin{lemma}\label{lem:metricextensions}
Suppose $X$ is a compact Hausdorff space and $f: \w^* \to X$ is continuous. There is a compact Hausdorff space $Y \supseteq X$, such that $f$ can be extended to a continuous function $F: \b\w \to Y$. Furthermore, we may assume that $F \rest \w$ is injective, and that $F(\w)$ is an open, relatively discrete subset of $Y$ with $F(\w) \cap X = \0$.
\end{lemma}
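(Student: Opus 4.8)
The plan is to realize $Y$ concretely, as a closed subspace of a product of two cubes, and to extend $f$ by hand. The one real idea is that we need \emph{two} extensions pulling in opposite directions: the Tietze Extension Theorem transports $f$ from the closed subset $\w^*$ to all of $\b\w$, which pins down where the images of the points of $\w$ should accumulate, while an auxiliary coordinate simultaneously forces those images to accumulate \emph{onto} $X$ (so that $Y$ is compact) and yet to stay isolated and off of $X$ (so that the ``furthermore'' clause holds).

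First I would embed $X$ into a Tychonoff cube $[0,1]^\k$ — possible since $X$ is compact Hausdorff — and write $f = \seq{f_i}{i < \k}$ for its coordinate functions $f_i \colon \w^* \to [0,1]$. Since each $n \in \w$ is isolated in $\b\w$, the set $\w^*$ is closed in the (normal) space $\b\w$, so Tietze gives continuous extensions $\hat f_i \colon \b\w \to [0,1]$ of the $f_i$; put $\hat f = \seq{\hat f_i}{i < \k} \colon \b\w \to [0,1]^\k$, a continuous extension of $f$. Separately, let $g \colon \b\w \to [0,1]$ be the Stone extension of the map $n \mapsto \tfrac{1}{n+1}$; by Lemma~\ref{lem:plimits}(1) and the fact that $\tfrac{1}{n+1} \to 0$, the function $g$ vanishes identically on $\w^*$, while plainly $g$ is injective on $\w$.

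Then I would set
$$ Y \ =\ \bigl(X \times \{0\}\bigr) \cup \set{\bigl(\hat f(n), \tfrac{1}{n+1}\bigr)}{n \in \w}\ \subseteq\ [0,1]^\k \times [0,1], $$
and define $F \colon \b\w \to [0,1]^\k \times [0,1]$ by $F(p) = (\hat f(p), g(p))$. The verifications are then routine: (i) $F$ maps $\b\w$ into $Y$, since $F(n) = (\hat f(n), \tfrac{1}{n+1})$ for $n \in \w$ and $F(p) = (f(p), 0) \in X \times \{0\}$ for $p \in \w^*$; (ii) every limit point of $\set{(\hat f(n), \tfrac{1}{n+1})}{n \in \w}$ equals $\plim(\hat f(n), \tfrac{1}{n+1}) = (\hat f(p), g(p))$ for some $p \in \b\w$ (by Lemma~\ref{lem:plimits}), and for $p \in \w^*$ this is $(f(p),0) \in X \times \{0\}$, so $Y$ is closed in the product and hence compact Hausdorff; (iii) identifying $X$ with $X \times \{0\}$, the map $F$ is a continuous extension of $f$ (it is the corestriction to $Y$ of a continuous map on $\b\w$ whose image lies in $Y$), it is injective on $\w$ because the last coordinate separates the points $F(n)$, and $F(\w) \cap X = \0$ because every point of $F(\w)$ has nonzero last coordinate; (iv) a small open box in the last coordinate isolates each $F(n)$ inside $Y$, so $F(\w)$ is an open, relatively discrete subset of $Y$.

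Almost everything above is bookkeeping; the only step I would expect to need a genuine (if short) argument is item (ii) — that $Y$ is closed in the product, equivalently that a net running off along an ultrafilter $p \in \w^*$ through the discrete points $\bigl(\hat f(n), \tfrac{1}{n+1}\bigr)$ converges to a point of $X \times \{0\}$. This is exactly where the two extensions cooperate: continuity of $\hat f$ sends the net's $[0,1]^\k$-coordinate to $\hat f(p) = f(p) \in X$, while $g(p) = 0$ drags the last coordinate down to $0$, so the limit lands in $X \times \{0\} \subseteq Y$. (This is, in effect, the construction underlying \cite{Eng}, 3.5.13.)
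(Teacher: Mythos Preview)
Your proof is correct and follows precisely the approach the paper indicates: the paper does not actually give a proof, merely remarking that the lemma ``is a fairly straightforward consequence of the Tietze Extension Theorem'' and pointing to Engelking, Theorem~3.5.13. You have supplied exactly those details --- coordinatewise Tietze extension into a cube plus an auxiliary coordinate $n \mapsto \tfrac{1}{n+1}$ to separate $F(\w)$ from $X$ --- and the verifications (i)--(iv) are all in order.
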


\begin{lemma}\label{lem:continuity}
Let $(X,f)$ be a dynamical system, and $Q: X \to Y$ a continuous surjection such that, for all $x_1,x_2 \in X$, if $Q(x_1) = Q(x_2)$ then $Q(f(x_1)) = Q(f(x_2))$. Then there is a unique continuous $g: Y \to Y$ such that $g \circ Q = Q \circ f$.
\end{lemma}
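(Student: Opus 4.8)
The plan is to define $g$ pointwise by factoring $Q \circ f$ through $Q$, check that the hypothesis makes this well-defined, and then obtain continuity from the fact that a continuous surjection between compact Hausdorff spaces is a closed map (hence a quotient map).

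First I would define $g$. Given $y \in Y$, choose any $x \in Q^{-1}(y)$, which is nonempty because $Q$ is surjective, and set $g(y) = Q(f(x))$. The hypothesis on $Q$ says exactly that this value is independent of the choice of $x \in Q^{-1}(y)$, so $g \colon Y \to Y$ is a well-defined function, and by construction $g(Q(x)) = Q(f(x))$ for every $x \in X$, i.e. $g \circ Q = Q \circ f$. Uniqueness is immediate: if $g'$ also satisfies $g' \circ Q = Q \circ f$, then $g$ and $g'$ agree on $Q(X) = Y$.

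The only real content is continuity of $g$. Here I would use that $Q$, being a continuous surjection from the compact space $X$ onto the Hausdorff space $Y$, is a closed map: if $C \sub X$ is closed then $C$ is compact, so $Q(C)$ is compact, hence closed in $Y$. Now fix a closed set $D \sub Y$. Since $Q$ is surjective and $g \circ Q = Q \circ f$, we have $g^{-1}(D) = Q((Q \circ f)^{-1}(D))$. The set $(Q \circ f)^{-1}(D)$ is closed in $X$ because $Q \circ f$ is continuous, so its image under the closed map $Q$ is closed in $Y$. Thus $g^{-1}(D)$ is closed for every closed $D \sub Y$, so $g$ is continuous.

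I do not anticipate any serious obstacle: the statement is essentially the universal property of $Q$ as a quotient map, specialized to the compact Hausdorff category, where every continuous surjection is automatically a quotient map. An alternative to the closed-map computation above is simply to observe that $Q$ is a quotient map and that $g \circ Q = Q \circ f$ is continuous, whence $g$ is continuous by the defining property of the quotient topology; the argument given is the same fact unwound, and avoids invoking quotient topologies explicitly.
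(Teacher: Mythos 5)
Your proposal is correct and follows essentially the same route as the paper: define $g$ by factoring through $Q$ (well-defined by the hypothesis, unique by surjectivity), then verify continuity via the identity $g^{-1}(D) = Q\bigl(f^{-1}(Q^{-1}(D))\bigr)$ together with the fact that $Q$, as a continuous surjection from a compact space onto a Hausdorff space, sends closed sets to closed sets. The paper phrases the last step as ``$f^{-1}(Q^{-1}(K))$ is compact, so its image is closed,'' which is the same closed-map argument you give.
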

\begin{proof}
The assumptions about $Q$ immediately imply that there is a unique function $g: Y \to Y$ such that $g \circ Q = Q \circ f$, namely $g(y) = Q(f(Q^{-1}(y)))$. We need to check that this function is continuous.

If $K$ is a closed subset of $Y$, then $f^{-1}(Q^{-1}(K))$ is closed in $X$. Because $X$ is compact, $f^{-1}(Q^{-1}(K))$ is compact, which implies $g^{-1}(K) = Q(f^{-1}(Q^{-1}(K)))$ is closed. Since $K$ was arbitrary, $g$ is continuous.
\end{proof}

In the same way that our main theorem can be seen as a dynamical version of Parovi\v{c}enko's theorem, the following result can be seen as a dynamical version of Lemma~\ref{lem:metricextensions}:

\begin{theorem}\label{thm:external}
$(X,f)$ is an abstract $\w$-limit set if and only if it is a quotient of $(\w^*,\s)$.
\end{theorem}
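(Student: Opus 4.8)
The plan is to prove both directions, treating the "quotient $\Rightarrow$ abstract $\w$-limit set" direction as the substantive one and the converse as relatively soft.

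For the converse, suppose $(X,f)$ is an abstract $\w$-limit set, so without loss of generality $X = \w_f(x)$ inside some dynamical system $(\widehat X, f)$ with $x \in \widehat X$. Consider the map $n \mapsto f^n(x)$ from $\w$ into $\widehat X$ and let $e : \b\w \to \widehat X$ be its Stone extension. By Lemma~\ref{lem:plimits}(4) we have $e \circ \s = f \circ e$ on all of $\b\w$, and by Lemma~\ref{lem:plimits}(1) the image $e(\w^*)$ is exactly $\bigcap_n \closure{\set{f^m(x)}{m \ge n}} = \w_f(x) = X$. So $Q := e \rest \w^*$ is a continuous surjection onto $X$ intertwining $\s$ and $f$, i.e. $(X,f)$ is a quotient of $(\w^*,\s)$. (One should check $e(\w^*) \sub X$ and $\supseteq X$: the first because $e(p) = \plim f^n(x)$ lies in the closure of any tail of the orbit; the second because every point of $\w_f(x)$ is a limit of a subnet of the orbit, hence of the form $\plim f^{n}(x)$ for a suitable ultrafilter $p$.)

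For the forward direction, suppose $Q : \w^* \to X$ is a continuous surjection with $Q \circ \s = f \circ Q$. I want to realize $(X,f)$ as $(\w_g(\text{pt}), g)$ in some extension. Apply Lemma~\ref{lem:metricextensions} to the continuous map $Q : \w^* \to X$: it yields a compact Hausdorff $Y \supseteq X$ and a continuous extension $F : \b\w \to Y$ with $F\rest\w$ injective onto an open relatively discrete set $F(\w)$ disjoint from $X$, and $F(\w^*) = Q(\w^*) = X$. The idea is to build a dynamical system on $Y$ whose "discrete part" $F(\w)$ is the forward orbit of a single point and whose limit set is $X$. Enumerate $F(\w) = \set{y_n}{n \in \w}$ with $y_n = F(n)$; define $g$ on $F(\w)$ by $g(y_n) = y_{n+1}$ (so $F \circ (\text{successor}) = g \circ F$ on $\w$), and define $g$ on $X$ to agree with $f$. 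This defines $g$ on all of $Y = X \cup F(\w)$. The point is that then $g \circ F = F \circ \s$ on all of $\b\w$: on $\w$ by construction, and on $\w^*$ because there $F = Q$ composed with the inclusion $X \hookrightarrow Y$, and $Q \circ \s = f \circ Q = g \circ Q$. Now $\w_g(y_0) = \bigcap_n \closure{\set{y_m}{m \ge n}}$; since $F(\w)$ is open and relatively discrete in $Y$ with $F(\w) \cap X = \0$, the closure of a tail of the orbit is (that tail) $\cup$ (its limit points in $X$), and $\bigcap_n$ of these is precisely $\bigcap_n \closure{F(\set{m}{m\ge n})} \cap X = F(\w^*) = X$. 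Hence $(X,g\rest X) = (X,f)$ is an abstract $\w$-limit set.

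The one genuine obstacle is \emph{continuity of $g$ at points of $X$}: we have defined $g$ piecewise on $X$ and on the (relatively discrete, open) set $F(\w)$, and continuity on $F(\w)$ and on the interior-disjoint closed set $X$ is automatic, but we must ensure $g$ is continuous when approaching a point of $X$ through the discrete points $y_n$. The clean way to handle this is to not define $g$ by hand on $Y$ at all, but instead to invoke Lemma~\ref{lem:continuity}: form the quotient of $\b\w$ that collapses nothing on $\w$ and acts as $Q$ on $\w^*$ — more precisely, note that the map $\w^* \to X \hookrightarrow Y$ together with $F\rest\w : \w \to F(\w)$ assemble (since $F$ is already the continuous function realizing this) into $F : \b\w \to Y$ itself, and that $F \circ \s$ and the prospective $g$ satisfy the hypothesis "$F(p_1) = F(p_2) \Rightarrow F(\s(p_1)) = F(\s(p_2))$" because $\s$ descends through $Q$ on $\w^*$ and through the successor map (injectively) on $\w$, and the two pieces never interact since $F(\w) \cap X = \0$. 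Lemma~\ref{lem:continuity} then delivers a \emph{continuous} $g : Y \to Y$ with $g \circ F = F \circ \s$ automatically, and restricting to $X = F(\w^*)$ gives $g \rest X = f$ (by surjectivity of $Q$ and $Q \circ \s = f \circ Q$). This sidesteps the hand-verification of continuity entirely; the remaining bookkeeping — that the orbit of $y_0$ under $g$ is exactly $\set{y_n}{n\in\w}$ and that its $\w$-limit set computes to $F(\w^*) = X$ — is then routine using the openness and relative discreteness of $F(\w)$ from Lemma~\ref{lem:metricextensions}.
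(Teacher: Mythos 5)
Your proposal is correct and follows essentially the same route as the paper: the easy direction via the Stone extension of $n \mapsto f^n(x)$, and the substantive direction by extending $Q$ to $\b\w$ via Lemma~\ref{lem:metricextensions}, defining $g$ on $Y$ to agree with $f$ on $X$ and with the shifted orbit on $F(\w)$, getting continuity from Lemma~\ref{lem:continuity}, and computing $\w_g(F(0)) = F(\w^*) = X$. The only bookkeeping point to make explicit is replacing $Y$ by $F(\b\w)$ so that $Y = X \cup F(\w)$ and $F$ is surjective (as Lemma~\ref{lem:continuity} requires), which the paper also does.
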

\begin{proof}
It is well known that if $(X,f)$ is an $\w$-limit set then it is a quotient of $(\w^*,\s)$. Indeed, the map $p \mapsto \plim f^n(x)$ gives a quotient mapping from $(\w^*,\s)$ to $(\w_f(x),f)$. For details and some discussion, see Section 2 of \cite{Bls}. Here we need to prove the converse.

Suppose $q: \w^* \to X$ is a quotient mapping from $(\w^*,\s)$ to $(X,f)$. Using Lemma~\ref{lem:metricextensions}, there is a compact Hausdorff space $Y$ containing $X$ such that $q$ extends to a continuous function $Q: \b\w \to Y$, where $Q \rest \w$ is injective, $Q(\w)$ is an open, relatively discrete subset of $Y$, and $Q(\w) \cap X = \0$. Replacing $Y$ with $Q(\b\w)$ if necessary, we may also assume that $Q$ is surjective.

Define $g: Y \to Y$ by
$$
g(y) = \begin{cases} 
f(y) & \textrm{ if $y \in X$} \\
Q(n+1) & \textrm{ if $y = Q(n)$, $n \in \w$}
\end{cases}
$$
This function is well-defined because $Q \rest \w$ is injective and $Q(\w) \cap X = \0$. By design, $Q \circ \s = g \circ Q$. By Lemma~\ref{lem:continuity}, $g$ is continuous.

To finish the proof, we will show that, in $(Y,g)$, $X$ is an $\w$-limit set. Letting $p = Q(0)$, we claim $X = \w_g(p)$. Notice that
$$\set{g^m(Q(0))}{m \geq n} = \set{Q(m)}{m \geq n}$$
for all $n$. Using the continuity of $Q$, we have
$$\closure{\set{Q(m)}{m \geq n}} \supseteq Q(\closure{\set{m}{m \geq n}}) \supseteq Q(\w^*) = X.$$
Thus $\w(Q(0)) \supseteq X$. The reverse inclusion follows from the fact that $Q(\w)$ is open and relatively discrete.
\end{proof}

\subsection*{Chain transitivity}

Suppose $(X,f)$ is a dynamical system and $d$ is a metric for $X$. An $\e$\emph{-chain} in $(X,f)$ is a sequence $\seq{x_i}{i \leq n}$ such that $d(f(x_i),x_{i+1}) < \e$ for all $i < n$. Roughly, an $\e$-chain is a piece of an orbit, but computed with a small error at each step. $(X,f)$ is called \emph{chain transitive} if for any $a,b \in X$ and any $\e > 0$, there is an $\e$-chain beginning at $a$ and ending at $b$.

Using open covers in the place of $\e$-balls, we can reformulate the definition of chain transitivity so that it applies to non-metrizable dynamical systems. Given $(X,f)$ and an open cover $\U$ of $X$, we say that $\seq{x_i}{i \leq n}$ is a $\U$\emph{-chain} if, for every $i < n$, there is some $U \in \U$ such that $f(x_i) \in U$ and $x_{i+1} \in U$. A dynamical system $(X,f)$ is \emph{chain transitive} if for any $a,b \in X$ and any open cover $\U$ of $X$, there is a $\U$-chain beginning at $a$ and ending at $b$.

\begin{lemma}\label{lem:ct}$\ $
\begin{enumerate}
\item A dynamical system is chain transitive if and only if it is weakly incompressible.
\item Every quotient of $(\w^*,\s)$ is weakly incompressible.
\end{enumerate}
\end{lemma}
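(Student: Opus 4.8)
The plan is to prove part (1) by establishing both implications, and then derive part (2) as a quick corollary using the fact that $(\w^*,\s)$ is chain transitive and that chain transitivity passes to quotients.

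\medskip

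\textit{Part (1), the contrapositive of ``weakly incompressible $\Rightarrow$ chain transitive.''} Suppose $(X,f)$ is not weakly incompressible, witnessed by an open set $U$ with $\0 \neq U \neq X$ and $f(\closure{U}) \sub U$. I would like to produce an open cover $\U$ and points $a,b$ admitting no $\U$-chain from $a$ to $b$. Since $f(\closure U) \sub U$, compactness of $\closure U$ gives an open $V$ with $f(\closure U) \sub V \sub \closure V \sub U$; iterating, one gets a ``funnel'' showing that any chain that starts inside $U$ (with small enough error relative to the cover refining $\{V, X \setminus \closure{U'}\}$ for a suitable intermediate $U'$) must stay inside $\closure U$. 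Concretely: let $\U = \{U, X \setminus \closure U\}$ — wait, this need not be a cover, so instead take $\U$ to be an open cover refining the pair $\{\,U,\ X\setminus f(\closure U)\,\}$; then for $a \in U$ and $b \notin \closure U$ (both nonempty since $\0\ne U\ne X$ and $f(\closure U)\sub U$ forces $f(\closure U)\ne X$), no $\U$-chain can escape $\closure U$, because each step $f(x_i)\in W\ni x_{i+1}$ with $W\in\U$: if $x_i\in\closure U$ then $f(x_i)\in f(\closure U)\sub U$, so the member $W$ containing $f(x_i)$ is forced to be $U$ (it meets $f(\closure U)$, so it cannot be the other set), hence $x_{i+1}\in U\sub\closure U$. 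By induction the whole chain stays in $\closure U$, so it cannot end at $b$. Thus chain transitivity fails.

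\medskip

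\textit{Part (1), ``weakly incompressible $\Rightarrow$ chain transitive.''} Suppose $(X,f)$ is not chain transitive: fix $a,b$ and an open cover $\U$ with no $\U$-chain from $a$ to $b$. Let $A$ be the set of all points reachable from $a$ by a $\U$-chain (including $a$ itself). The key claim is that $A$ is open: if $x\in A$, pick $U\in\U$ with $f(x)\in U$; then for every $y$ with $f(y)\in U$ we have $x\rightsquigarrow\dots\rightsquigarrow x, y$ extending a chain to $x$ (since $f(x)\in U\ni y$ requires $y\in U$ — more carefully, $A$ together with the one-step relation shows any $y\in f^{-1}[U]$ lying appropriately is reachable; the cleanest route is to show $\Int(X,{\bigcup\{W\in\U: W\cap A\ne\0 \text{ via } f\}})$ absorbs $A$). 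Then one checks $f(\closure A)\sub A$: if $z\in\closure A$ and $z\in U\in\U$, then $U$ meets $A$, so picking $x\in A\cap U$ we can form the chain to $x$ and append $z$ (using $f(x)$ — this needs care), showing $f(z)$ or rather points near it are in $A$; pushing this through gives $f(\closure A)\sub A$. Since $a\in A$ and $b\notin A$, $A$ is a nontrivial open set with $f(\closure A)\sub A$, so $(X,f)$ is not weakly incompressible. I expect \textbf{this direction, and specifically verifying $f(\closure A)\sub A$ cleanly}, to be the main obstacle — the definitions must be set up so that ``reachable by a $\U$-chain'' is genuinely open and $f$-forward-closed, which requires being slightly careful about whether endpoints of chains are counted and about the one-step relation $f(x_i)\in W\ni x_{i+1}$.

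\medskip

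\textit{Part (2).} Lemma~\ref{lem:ct}(1) reduces this to: every quotient of $(\w^*,\s)$ is chain transitive, and $(\w^*,\s)$ is itself chain transitive. For the latter, note $(\w^*,\s)$ is minimal in a strong enough sense — actually it suffices that for any $p,q\in\w^*$ and any open cover, the orbit of $p$ is dense (indeed $\closure{\{\s^n(p):n\ge k\}}=\w^*$ for all $k$, since $\w^*$ has no isolated points and the $\s^n(p)$ accumulate everywhere; alternatively use that $\w^*=\w_\s(0)$), so a genuine orbit segment from $p$ gets arbitrarily close to $q$, giving a $\U$-chain. For preservation under quotients: if $Q:(\w^*,\s)\to(X,f)$ is a quotient map, $a,b\in X$, and $\U$ is an open cover of $X$, pull $\U$ back to an open cover $Q^{-1}[\U]$ of $\w^*$, pick preimages $a',b'$, form a $Q^{-1}[\U]$-chain $\seq{x_i}{i\le n}$ from $a'$ to $b'$ in $\w^*$ (here one uses $\s\circ$ and $Q\circ\s=f\circ Q$), and observe that $\seq{Q(x_i)}{i\le n}$ is a $\U$-chain from $a$ to $b$: if $\s(x_i),x_{i+1}\in Q^{-1}[U]$ then $f(Q(x_i))=Q(\s(x_i))\in U$ and $Q(x_{i+1})\in U$. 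Hence the quotient is chain transitive, and by part (1) weakly incompressible.
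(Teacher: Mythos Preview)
The paper does not prove this lemma; it simply cites \cite{Akn} for (1) and \cite{WRB} for both parts. So I am evaluating your argument against the standard proofs rather than against anything in the paper.

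Your outline for part (1) is the standard one and is essentially correct. For the step you flagged as the main obstacle --- verifying $f(\closure{A}) \sub A$ where $A$ is the set of points reachable from $a$ by a $\U$-chain of length $\geq 1$ --- the missing ingredient is the continuity of $f$. Given $z \in \closure{A}$, pick $W \in \U$ with $f(z) \in W$. Then $f^{-1}(W)$ is an open neighborhood of $z$, so it meets $A$; take $y \in A \cap f^{-1}(W)$ with a $\U$-chain $a = x_0, \ldots, x_n = y$. Since $f(y) \in W$ and $f(z) \in W$, appending $f(z)$ yields a legal $\U$-chain ending at $f(z)$, so $f(z) \in A$. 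Openness of $A$ is immediate once you insist on chains of length $\geq 1$: any $y \in A$ lies in the open set $\U_\star(f(x_{n-1})) \sub A$. In the other direction your cover refining $\{U, X \setminus f(\closure{U})\}$ works, but note that your induction actually traps the chain in $U$ itself, not merely in $\closure{U}$; so take $b \in X \setminus U$, which exists because $U \neq X$ --- you do not need $\closure{U} \neq X$.

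Part (2) contains a genuine error. You claim that for $p \in \w^*$ the forward orbit $\{\s^n(p) : n \geq 0\}$ is dense in $\w^*$, so that an actual orbit segment furnishes the required $\U$-chain. This is false: $\w^*$ is not separable (its weight is $\continuum$), so no countable set --- in particular no orbit --- is dense. Your alternative remark ``$\w^* = \w_\s(0)$'' is a correct statement about $(\b\w,\s)$, but to use it you would need the general (non-metric) fact that $\w$-limit sets are weakly incompressible, which you have not supplied and which is essentially the content of the lemma. One direct fix: given a finite clopen cover $\{A_1^*, \ldots, A_k^*\}$ of $\w^*$ with each $A_i$ infinite, form the directed graph on $\{1,\ldots,k\}$ with an edge $i \to j$ whenever $\{n \in A_i : n+1 \in A_j\}$ is infinite. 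If some nonempty proper $S \sub \{1,\ldots,k\}$ were closed under $\to$, then $\bigcup_{i \in S} A_i$ would be (up to a finite set) closed under successor and hence cofinite, contradicting $S \neq \{1,\ldots,k\}$. So the graph is strongly connected, and following its edges lets you thread a $\U$-chain from any $p$ to any $q$. Your argument that chain transitivity passes to quotients is correct as written.
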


The proof of $(1)$ is essentially the same as the proof for metrizable dynamical systems (see, e.g., Theorem 4.12 in \cite{Akn}). Both $(1)$ and $(2)$ can be found (with proofs) in Section 5 of \cite{WRB}.

\subsection*{The Bowen-Sharkovsky theorem}

We now give a proof of the theorem of Bowen and Sharkovsky mentioned in the introduction.

\begin{theorem}[Bowen-Sharkovsky]\label{thm:BS}
A metrizable dynamical system is an abstract $\w$-limit set if and only if it is weakly incompressible.
\end{theorem}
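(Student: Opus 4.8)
The plan is to prove both directions via chain transitivity, using Lemma~\ref{lem:ct}(1) to replace ``weakly incompressible'' by ``chain transitive'' throughout. The forward direction (Sharkovsky) is the easy half: if $(X,f)$ is an abstract $\w$-limit set, then by Theorem~\ref{thm:external} it is a quotient of $(\w^*,\s)$, hence weakly incompressible by Lemma~\ref{lem:ct}(2). (Alternatively, one gives the direct argument that an $\w$-limit set must be weakly incompressible, since an invariant open set capturing its closure would trap the tail of the orbit and prevent the outside points from being limit points.)

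For the converse (Bowen), suppose $(X,f)$ is metrizable and chain transitive, with metric $d$. First I would fix a countable dense set $\set{a_i}{i \in \w}$ in $X$, arrange an enumeration $\seq{(b_j, \e_j)}{j \in \w}$ of all pairs where $b_j$ is one of the $a_i$ and $\e_j$ ranges over $\set{2^{-k}}{k \in \w}$ in such a way that each pair recurs infinitely often, and then use chain transitivity to splice together finite $\e_j$-chains into a single infinite ``pseudo-orbit'' $\seq{z_m}{m \in \w}$ that visits every $a_i$ to within every tolerance infinitely often, while the error at step $m$ shrinks to $0$. Concretely, having built a finite chain ending at some point, I append an $\e_j$-chain from that endpoint to $b_j$ and then from $b_j$ onward, always using tolerances that go to $0$ along the construction. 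The point of the recurrence bookkeeping is that $\w_f$ of this pseudo-orbit --- if it were a genuine orbit --- would contain every $a_i$, hence be dense, hence be all of $X$.

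The main obstacle, and the heart of Bowen's argument, is that $\seq{z_m}{m\in\w}$ is only a pseudo-orbit, not an actual orbit, so $\w_f(x)$ for a single point $x \in X$ need not equal $X$. The standard fix is to pass to a larger space: take $x = \plim z_m$ for a suitable free ultrafilter $p$ (or work in $\b\w$), using the Stone extension machinery of Lemma~\ref{lem:plimits}, so that $f^m(x)$ shadows $z_m$ closely enough. More precisely, since $d(f(z_m), z_{m+1}) \to 0$, the sequence $\seq{z_m}{m\in\w}$ is asymptotic to a true orbit in a metric sense only after we move to the shift system on $X^{\w}$ or to $\b\w$; in the $\b\w$ formulation one shows that in the dynamical system $(Y,g)$ obtained by adjoining the pseudo-orbit as a discrete set of isolated points converging appropriately (exactly as in the proof of Theorem~\ref{thm:external}), the point corresponding to $z_0$ has $\w_g = X$. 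Because the construction forces the error to $0$, every limit point of $\seq{z_m}{m \geq n}$ lies in $X$, giving $\w_g(z_0) \sub X$; density of the $a_i$ among the visited points gives the reverse inclusion. This realizes $(X,f)$ as an abstract $\w$-limit set, completing the converse.

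I expect to extract from this a slightly stronger statement --- flagged in the excerpt as Corollary~\ref{cor:BS} --- namely that the ambient system $(Y,g)$ and the point can be chosen with some extra control (e.g.\ $Y$ metrizable of the same weight, or the orbit closure matching exactly), which will be needed later when these metrizable pieces are assembled via elementary submodels into the weight-$\aleph_1$ case.
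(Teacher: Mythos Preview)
Your argument for Theorem~\ref{thm:BS} is correct and follows the same template as the paper: build a pseudo-orbit $\seq{z_m}{m \in \w}$ in $X$ with errors tending to zero and with dense range, then realize $X$ as an $\w$-limit set in a larger system. The paper carries this out slightly differently: rather than adjoining isolated points by hand, it takes the Stone extension $Q: \w^* \to X$ of the map $m \mapsto z_m$, checks directly that $Q \circ \s = f \circ Q$ (this is exactly where $d(f(z_m),z_{m+1}) \to 0$ is used, via Lemma~\ref{lem:plimits}), and then invokes Theorem~\ref{thm:external}. This is cleaner than your $(Y,g)$ construction, and your detour through ``take $x = \plim z_m$'' and ``$f^m(x)$ shadows $z_m$'' is a red herring --- no single point of $X$ need have $\w_f(x) = X$, and $p$-limits do not produce shadowing orbits.

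The one substantive difference is in how the pseudo-orbit is organized. You visit a fixed countable dense set with shrinking tolerances; the paper instead concatenates $\frac{1}{m}$-chains that each begin and end at a \emph{fixed} basepoint $x_0$ and each cover $X$ by $\frac{1}{m}$-balls. For Theorem~\ref{thm:BS} alone either bookkeeping works, but the loop structure is precisely the content of Corollary~\ref{cor:BS}: the sequence is ``eventually decomposable into $\mathcal U$-compliant $x$-loops'' for every nice cover $\mathcal U$. Your guess that the extra control concerns the weight of $Y$ or an exact orbit closure is off --- it is this loop decomposition, anchored at a fixed point, that the recursion in the main theorem needs in order to splice in new coordinates coherently at successor stages.
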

\begin{proof}
The forward direction is a consequence of Theorem~\ref{thm:external} and Lemma~\ref{lem:ct}. To prove the reverse direction, we will use chain transitivity instead of weak incompressibility.

Let $(X,f)$ be a chain transitive dynamical system, and let $d$ be a metric for $X$. Pick $x_0 \in X$ arbitrarily. Using chain transitivity and the compactness of $X$, define $x_1, x_2, \dots, x_{n_1}$ so that 
\begin{enumerate}
\item $\seq{x_i}{0 \leq i \leq n_1}$ is a $1$-chain
\item $\bigcup_{0 \leq i \leq n_1} B_1(x_i) = X$, and 
\item $x_{n_1} = x_0$.
\end{enumerate}
Now assuming that $\seq{x_i}{i \leq n_m}$ have already been defined, define $x_{n_m+1}, x_{n_m+2}, \dots, x_{n_{m+1}}$ so that
\begin{enumerate}
\item $\seq{x_i}{n_m \leq i \leq n_{m+1}}$ is a $\frac{1}{m}$-chain,
\item $\bigcup_{n_m \leq i \leq n_{m+1}} B_{\frac{1}{m}}(x_i) = X$, and 
\item $x_{n_{m+1}} = x_0$.
\end{enumerate}
It is not difficult to see that chain transitivity and compactness together allow us to build such a sequence of points.

Define $Q: \w^* \to X$ to be the Stone extension of the map $n \mapsto x_n$. This function is automatically continuous. It follows from $(2)$ above that $\set{x_m}{m \geq n}$ is dense in $X$ for every $n$, which implies that $Q$ is surjective. It remains to show that $Q \circ \s = f \circ Q$.

Fix $p \in \w^*$ and $\e > 0$. Let $m$ be sufficiently large (precisely, we will need $m > n_k$ where $\frac{1}{k} < \e$). Notice that
$$\textstyle Q(\s(p)) = \splim x_n = \plim x_{n+1}, \ \text{ and}$$
$$f(Q(p)) = f(\plim x_n) = \plim f(x_n).$$
Using the fact that $p$ is non-principal and that $d(f(x_n),x_{n+1}) < \e$ for every $n \geq m$, we have
$$\textstyle d(f(Q(p)),Q(\s(p))) = d(\plim f(x_n),\plim x_{n+1}) \leq \e.$$
Since $\e$ was arbitrary, $f(Q(p)) = Q(\s(p))$. Since $p$ was also arbitrary, $Q \circ \s = f \circ Q$ as desired.
\end{proof}

After developing a few more definitions in the next section, we will state a slightly stronger version of this result (which already follows from the given proof). This stronger version will be the base step in our recursive proof of the main theorem.

\section{A few lemmas}\label{sec:lemmas}

In this section we begin the proof of our main theorem in the form of several lemmas. The heart of the proof -- a transfinite recursion driven by a chain of elementary submodels -- will be in the next section.

Given an ordinal $\dlt$, the \emph{standard basis} for $[0,1]^\dlt$ is the basis generated by sets of the form $\pi_\a^{-1}(p,q)$, where $p,q \in [0,1] \cap \Q$ and $\pi_\a$ is the projection mapping a point of $[0,1]^\dlt$ to its $\a^{\mathrm{th}}$ coordinate. Whenever we mention basic open subsets of $[0,1]^\dlt$, this is the basis we mean. Notice that every basic open subset of $[0,1]^\dlt$ can be defined using finitely many ordinals less than $\dlt$ and finitely many rational numbers.

Suppose $X$ is a closed subset of $[0,1]^\dlt$. By an \emph{open cover} of $X$, we will mean a set $\mathcal U$ of open subsets of $[0,1]^\dlt$ with $X \sub \bigcup \mathcal U$. A \emph{nice open cover} of $X$ is a finite open cover $\U$ of $X$ consisting of basic open subsets of $[0,1]^\dlt$, such that $U \cap X \neq \0$ for all $U \in \U$.

If $\mathcal U$ is a collection of subsets of $[0,1]^\dlt$ and $A \sub [0,1]^\dlt$,
$$\mathcal U_\star(A) = \bigcup \set{U \in \mathcal U}{U \cap A \neq \0}.$$
For convenience, if $A = \{a\}$ we write $\mathcal U_\star(a)$ instead of $\mathcal U_\star(\{a\})$.

If $\mathcal U$ and $\mathcal V$ are collections of open sets, recall that $\mathcal U$ \emph{refines} $\mathcal V$ if for every $U \in \mathcal U$ there is some $V \in \mathcal V$ with $U \sub V$. $\mathcal U$ is a \emph{star refinement} of $\mathcal V$ if for every $U \in \mathcal U$ there is some $V \in \mathcal V$ such that $\mathcal U_\star(U) \sub V$. It is known (see, e.g., Theorem 5.1.12 in \cite{Eng}) that every open cover of a compact Hausdorff space has a star refinement.

\begin{lemma}\label{lem:stars}
Let $X$ be a closed subset of $[0,1]^\dlt$. A function $f: X \to X$ is continuous if and only if for every open cover $\mathcal U$ of $X$ there is a nice open cover $\mathcal V$ of $X$ such that
$$\set{\mathcal V_\star(f(\mathcal V_\star(x) \cap X))}{x \in X}$$
is an open cover of $X$ that refines $\mathcal U$.
\end{lemma}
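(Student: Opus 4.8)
The plan is to prove the two directions separately. The converse (the covering condition implies continuity) is a short localization argument that uses only the ``refines $\mathcal U$'' clause, not the ``covers $X$'' clause. Fix $x \in X$ and a basic open set $O$ with $f(x) \in O$; since $[0,1]^\dlt$ is Hausdorff we can build a nice open cover $\mathcal U$ of $X$ whose only member containing $f(x)$ is $O$, by picking for each $z \in X \setminus \{f(x)\}$ a basic open $O_z \ni z$ with $f(x) \notin O_z$, adjoining $O$, and passing to a finite subcover. Apply the hypothesis to this $\mathcal U$ to get a nice cover $\mathcal V$. The member of $\set{\mathcal V_\star(f(\mathcal V_\star(y) \cap X))}{y \in X}$ indexed by $y = x$ contains $f(x)$ --- because $x \in \mathcal V_\star(x) \cap X$, so $f(x) \in f(\mathcal V_\star(x) \cap X)$, and $f(x)$ lies in a member of $\mathcal V$ meeting that set --- and by the refinement clause it is contained in a member of $\mathcal U$, necessarily $O$. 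Hence $f(\mathcal V_\star(x) \cap X) \sub O$; since $\mathcal V_\star(x) \cap X$ is a neighbourhood of $x$ in $X$, this shows $f$ is continuous at $x$. As $x$ was arbitrary, $f$ is continuous.

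For the forward direction I would assemble $\mathcal V$ from a short tower of refinements, using three facts: (i) every open cover of a compact Hausdorff space has a star refinement (the fact recalled just above); (ii) every open cover of $X$ has a nice refinement (compactness plus shrinking to basic open sets and discarding cells that miss $X$), and any refinement of a nice cover that star-refines some $\mathcal W$ again star-refines $\mathcal W$; and (iii) the ``uniform'' form of continuity: for every nice cover $\mathcal A$ of $X$ there is a nice cover $\mathcal B$ of $X$ such that $\set{f(B \cap X)}{B \in \mathcal B}$ refines $\mathcal A$ --- which follows by using continuity of $f$ at each $x$ to pick a basic open $B_x \ni x$ with $f(B_x \cap X)$ inside a member of $\mathcal A$, and then taking a finite subcover. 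Given an open cover $\mathcal U$ of $X$, let $\mathcal U_1$ be a nice cover that star-refines $\mathcal U$; let $\mathcal B$ be as in (iii) for $\mathcal A = \mathcal U_1$; let $\mathcal B' = \set{B \cap A}{B \in \mathcal B,\, A \in \mathcal U_1,\, B \cap A \cap X \neq \0}$, a nice cover refining both $\mathcal B$ and $\mathcal U_1$; and let $\mathcal V$ be a nice cover that star-refines $\mathcal B'$.

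To check that this $\mathcal V$ works, fix $x \in X$ and a member $V_0 \in \mathcal V$ with $x \in V_0$. Every member of $\mathcal V$ containing $x$ meets $V_0$, so $\mathcal V_\star(x) \sub \mathcal V_\star(V_0) \sub B'$ for some $B' \in \mathcal B'$, hence $\mathcal V_\star(x) \sub B$ for some $B \in \mathcal B$, and therefore $f(\mathcal V_\star(x) \cap X) \sub f(B \cap X) \sub A$ for some $A \in \mathcal U_1$. Then, by monotonicity of $\mathcal V_\star$, $\mathcal V_\star(f(\mathcal V_\star(x) \cap X)) \sub \mathcal V_\star(A) \sub (\mathcal U_1)_\star(A) \sub U$ for some $U \in \mathcal U$: the middle containment holds because $\mathcal V$ refines $\mathcal U_1$ (a star refinement is a refinement, and $\mathcal B'$ refines $\mathcal U_1$), and the last because $\mathcal U_1$ star-refines $\mathcal U$. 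So the family refines $\mathcal U$. It covers $X$ because $\mathcal V_\star$ and $f$ both commute with unions and $\bigcup_{x \in X}(\mathcal V_\star(x) \cap X) = X$, so the union of the family is $\mathcal V_\star(f(X))$, which contains $X$ since $f$ is onto (as it is for every quotient of $(\w^*,\s)$ and every abstract $\w$-limit set).

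I expect the only real difficulty to be bookkeeping: keeping straight which cover must \emph{star}-refine which. The inner term $\mathcal V_\star(x)$ forces $\mathcal V$ to \emph{star}-refine the cover $\mathcal B'$ encoding uniform continuity, whereas the outer application $\mathcal V_\star(\,\cdot\,)$ only needs $\mathcal V$ to \emph{refine} $\mathcal U_1$; and the continuity of $f$ has to be fed in through the uniform version (iii), not pointwise. Once these roles are disentangled, nothing deeper is involved.
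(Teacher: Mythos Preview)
Your converse argument and your verification of the \emph{refinement} clause in the forward direction are correct and follow essentially the paper's route: the paper builds the same tower of refinements, encoding continuity via the preimage cover $\mathcal W^\leftarrow = \set{W^\leftarrow}{W^\leftarrow \cap X = f^{-1}(W \cap X)}$ in place of your ``uniform continuity'' cover $\mathcal B$, but these play the same role and the bookkeeping is identical.

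The one genuine gap is in your verification of the ``covers $X$'' clause. You reduce it to $X \sub \mathcal V_\star(f(X))$ and then invoke surjectivity of $f$, but the lemma does not assume $f$ is onto; the parenthetical (``as it is for every quotient of $(\w^*,\s)$\dots'') imports a hypothesis the lemma does not have. In fact the covering clause is \emph{false} without surjectivity: take $X=\{0,1\}\sub[0,1]$, $f\equiv 0$, and $\mathcal U=\{[0,\tfrac13),(\tfrac23,1]\}$; for any nice $\mathcal V$ the family collapses to the single set $\mathcal V_\star(0)$, and the refinement requirement forces $\mathcal V_\star(0)\sub[0,\tfrac13)$, which misses $1$. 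So no $\mathcal V$ can make the family both refine $\mathcal U$ and cover $X$. The paper's own proof does not verify the covering clause either, and a glance at the two places where Lemma~\ref{lem:stars} is invoked (the proofs of Lemmas~\ref{lem:projections} and~\ref{lem:mainlemma}) shows that only the refinement conclusion is ever used. The defect is therefore in the statement of the lemma rather than in your argument; your instinct that something extra (surjectivity) is required for coverage was exactly right.
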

\begin{proof}
Suppose that $f$ is continuous and let $\mathcal U$ be an open cover of $X$. Let $\mathcal W$ be a star refinement of $\U$. By continuity, $f^{-1}(W \cap X)$ is a relatively open subset of $X$ for every $W \in \mathcal W$. For each $W \in \mathcal W$ pick some open subset $W^\leftarrow$ of $[0,1]^\dlt$ such that $W^\leftarrow \cap X = f^{-1}(W \cap X)$. Let $\mathcal W^\leftarrow = \set{W^\leftarrow}{W \in \mathcal W}$, and observe that $\mathcal W^\leftarrow$ is an open cover of $X$. Let $\mathcal Y$ be a star refinement of $\mathcal W^\leftarrow$, and let $\mathcal V$ be a common refinement of $\mathcal Y$ and $\mathcal W$, for example $\set{Y \cap W}{Y \in \mathcal Y \text{ and } W \in \mathcal W}$. By refining $\V$ further we may assume it consists of basic open sets; by throwing some sets away we may assume $\V$ is finite and every element of $\V$ meets $X$. In other words, we may take $\V$ to be a nice open cover.

If $x \in X$, then $\V_\star(x) \sub \mathcal Y_\star(x) \sub W^\leftarrow$ for some $W^\leftarrow \in \mathcal W^\leftarrow$. By the definition of $\mathcal W^\leftarrow$, $f(\V_\star(x) \cap X) \sub W$ for some $W \in \mathcal W$. Because $\V$ refines $\mathcal W$ and $\mathcal W$ star refines $\mathcal U$, $\V_\star(f(\V_\star(x) \cap X)) \sub \mathcal W_\star(W) \sub U$ for some $U \in \U$.

For the other direction, suppose that $f$ satisfies the conclusion of the lemma. Fix $x \in X$ and let $U,W$ be open sets containing $f(x)$ such that $f(x) \in W \sub \closure{W} \sub U$. Let $\mathcal U = \{U,[0,1]^\k - W\}$, and let $\V$ be a nice open cover of $X$ satisfying the conclusion of the lemma. Setting $V = \V_\star(f(\V_\star(x) \cap X))$, we must have either $V \sub U$ or $V \cap U = \0$. The latter is impossible because $f(x) \in V$, so $V \sub U$. Thus we have found a neighborhood of $x$ in $X$, namely $\V_\star(x) \cap X$, whose image under $f$ is contained in $U \cap X$. Since $U$ and $x$ were arbitrary, $f$ is continuous.
\end{proof}

Given a countable ordinal $\dlt$, define $\Pi_\dlt: [0,1]^{\w_1} \to [0,1]^\dlt$ to be the natural projection onto the first $\dlt$ coordinates, namely $\Pi_\dlt = \Delta_{\a < \dlt}\pi_\a$.

\begin{lemma}\label{lem:projections}
Let $X$ be a closed subset of $[0,1]^{\w_1}$ and let $f: X \to X$ be continuous. There is a closed unbounded $C \sub \w_1$ such that for every $\dlt \in C$ and $x,y \in X$, if $\Pi_\dlt(x) = \Pi_\dlt(y)$ then $\Pi_\dlt(f(x)) = \Pi_\dlt(f(y))$.
\end{lemma}
\begin{proof}
For each $\a < \w_1$, let $\mathcal N_\a$ denote the set of all nice open covers of $X$ that are defined using only ordinals less than $\a$.

For each open cover $\U$ of $X$ there is a nice open cover $\V$ satisfying the conclusion of Lemma~\ref{lem:stars}, and $\V \in \mathcal N_\a$ for some $\a < \w_1$. For each $\a < \w_1$ define $\phi(\a)$ to be the least ordinal with the property that if $\U \in \mathcal N_\a$, then some $\V \in \mathcal N_{\phi(\a)}$ satisfies the conclusion of Lemma~\ref{lem:stars}. Because each $\mathcal N_\a$ is countable, $\phi$ maps countable ordinals to countable ordinals.

Let $C$ be the set of closure points of $\phi$:
$$C = \set{\dlt < \w_1}{\text{if } \a < \dlt \text{ then } \phi(\a) < \dlt}.$$
We claim that $C$ satisfies the conclusions of the lemma.

Suppose $\dlt \in C$ and $\Pi_\dlt(f(y)) \neq \Pi_\dlt(f(z))$. We may find some $\U \in \mathcal N_\dlt$ such that $\U$ separates $f(y)$ from $f(z)$, in the sense that there is no $U \in \U$ containing both $f(y)$ and $f(z)$. Because $\dlt$ is a closure point of $\phi$, there is some $\V \in \mathcal N_\dlt$ satisfying the conclusion of Lemma~\ref{lem:stars}.

For every $x \in X$, there is some $U \in \U$ such that$\V_\star(f(\V_\star(x) \cap X)) \sub U$. Because $f(y) \in f(\V_\star(y) \cap X)$ and $f(z) \in (\V_\star(z) \cap X)$, our choice of $\U$ guarantees $f(\V_\star(y) \cap X) \cap f(\V_\star(y) \cap X) = \0$, which implies $\V_\star(y) \cap \V_\star(z) \cap X = \0$. Since $\V \in \mathcal N_\dlt$, this implies $\Pi_\dlt(y) \neq \Pi_\dlt(z)$.
\end{proof}

\begin{corollary}\label{cor:inverselimit}
Every dynamical system of weight $\aleph_1$ can be written as an inverse limit of metrizable dynamical systems.
\end{corollary}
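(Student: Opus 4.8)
The plan is to leverage Lemma~\ref{lem:projections} directly. Start with a dynamical system $(X,f)$ of weight $\aleph_1$. Since $X$ is compact Hausdorff of weight $\aleph_1$, we may embed $X$ as a closed subset of $[0,1]^{\w_1}$; identify $X$ with its image. Apply Lemma~\ref{lem:projections} to obtain a closed unbounded set $C \sub \w_1$ such that for every $\dlt \in C$, the projection $\Pi_\dlt \rest X$ satisfies the hypothesis of Lemma~\ref{lem:continuity}: whenever $\Pi_\dlt(x) = \Pi_\dlt(y)$ we have $\Pi_\dlt(f(x)) = \Pi_\dlt(f(y))$.

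Next, for each $\dlt \in C$ let $X_\dlt = \Pi_\dlt(X) \sub [0,1]^\dlt$, a closed subset of a metrizable space, hence metrizable; and let $Q_\dlt = \Pi_\dlt \rest X : X \to X_\dlt$, a continuous surjection. By Lemma~\ref{lem:continuity} there is a unique continuous $f_\dlt : X_\dlt \to X_\dlt$ with $f_\dlt \circ Q_\dlt = Q_\dlt \circ f$. Thus each $(X_\dlt, f_\dlt)$ is a metrizable dynamical system and $Q_\dlt$ is a morphism of dynamical systems. For $\dlt \leq \dlt'$ in $C$, the further projection $[0,1]^{\dlt'} \to [0,1]^\dlt$ restricts to a continuous surjection $\pi_{\dlt'\dlt} : X_{\dlt'} \to X_\dlt$ with $\pi_{\dlt'\dlt} \circ Q_{\dlt'} = Q_\dlt$; applying Lemma~\ref{lem:continuity} once more (the relevant identification hypothesis for $\pi_{\dlt'\dlt}$ follows from the one for $Q_\dlt$ and $Q_{\dlt'}$, using surjectivity of $Q_{\dlt'}$), these bonding maps commute with $f_\dlt$ and $f_{\dlt'}$, so $\seq{(X_\dlt,f_\dlt), \pi_{\dlt'\dlt} : \dlt \leq \dlt' \in C}$ is an inverse system of metrizable dynamical systems.

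Finally, I would check that $(X,f)$ is the inverse limit of this system. The maps $Q_\dlt$ induce a continuous map $Q : X \to \varprojlim_{\dlt \in C} X_\dlt$; it is surjective because each $Q_\dlt$ is and $X$ is compact, and it is injective because $C$ is cofinal in $\w_1$, so the coordinates indexed by $\bigcup_{\dlt \in C}\dlt = \w_1$ separate points of $X \sub [0,1]^{\w_1}$. Hence $Q$ is a homeomorphism, and since each $Q_\dlt$ intertwines $f$ with $f_\dlt$, the induced map intertwines $f$ with the inverse limit of the $f_\dlt$'s. Therefore $(X,f) \iso \varprojlim (X_\dlt, f_\dlt)$, as desired.

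The one point requiring genuine care — the main obstacle — is verifying that the bonding maps $\pi_{\dlt'\dlt}$ are themselves dynamical-system morphisms, i.e.\ that they respect the maps $f_\dlt$; this is where one must unwind the definitions and invoke Lemma~\ref{lem:continuity} for $\pi_{\dlt'\dlt}$, checking its identification hypothesis from those already established for $Q_\dlt$ and $Q_{\dlt'}$. Everything else is routine: the metrizability of $X_\dlt$, the surjectivity and injectivity of the induced map to the inverse limit, and the compatibility of the whole diagram all follow from standard facts about projections, inverse limits of compacta, and the uniqueness clause in Lemma~\ref{lem:continuity}.
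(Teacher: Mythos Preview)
Your proposal is correct and follows essentially the same approach as the paper: embed $X$ in $[0,1]^{\w_1}$, use Lemma~\ref{lem:projections} to get the club $C$, define $X_\dlt = \Pi_\dlt(X)$ and $f_\dlt$ via Lemma~\ref{lem:continuity}, and take the inverse limit along the natural projections. You have simply filled in more of the routine verifications (bonding maps are morphisms, the induced map to the limit is a homeomorphism) than the paper's terse proof does.
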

\begin{proof}
Let $(X,f)$ be a dynamical system of weight $\aleph_1$. Embed $X$ in $[0,1]^{\w_1}$, and let $C$ be the closed unbounded set of ordinals described in the previous lemma. For each $\dlt \in C$, let $X_\dlt = \Pi_\dlt(X)$ and define $f_\dlt: X_\dlt \to X_\dlt$ by $f_\dlt(\Pi_\dlt(x)) = \Pi_\dlt(f(x))$, which is continuous by Lemma~\ref{lem:continuity}. Then $\seq{(\Pi_\dlt(X),f_\dlt)}{\dlt \in C}$ is an inverse limit system, having the natural projections as bonding maps, and the limit of this system is $(X,f)$.
\end{proof}

Before moving on to our next lemma, we take a moment to justify the use of elementary submodels in the next section. Na\"{i}vely, one might wonder why we cannot simply prove our main theorem in the style of B{\l}aszczyk and Szyma\'nski, using Corollary~\ref{cor:inverselimit} and the appropriate analogue of Lemma~\ref{lem:easylift}:
\begin{itemize}
\item[($*$)] \emph{Let $(Y,g)$ and $(Z,h)$ be metrizable dynamical systems, and let $Q_Z: \omega^* \to Z$ and $\pi: Y \to Z$ be quotient mappings. Then there is a quotient mapping $Q_Y: \w^* \to Y$ such that $Q_Z = \pi \circ Q_Y$.}
\end{itemize}
The following example shows that $(*)$ is not true, so that we need more than a simple topological inverse limit structure in order to make B{\l}aszczyk and Szyma\'nski's proof go through. We will simply sketch the example and leave detailed proofs to the reader.

\begin{example}\label{ex:liftingproblems}
$([0,1],\mathrm{id})$ is a weakly incompressible dynamical system, and for our example it will play the role of $(Y,g)$ and $(Z,h)$ in $(*)$. Define $\pi: [0,1] \to [0,1]$ by setting $\pi(0) = 0$, $\pi(\frac{2}{3}) = 1$, and $\pi(1) = \frac{1}{2}$, and then extending $\pi$ linearly on the rest of $[0,1]$. We will define a quotient mapping $\pi_Z$ from $(\w^*,\s)$ to $([0,1],\mathrm{id})$ that does not lift through $\pi$.

Define $p_Z: \w \to [0,1]$ so that $p_Z(n)$ is the distance from $s(n) = \sum_{m \leq n}\frac{1}{m}$ to the nearest even integer ($s$ could be replaced with any increasing unbounded sequence of reals where the distance between successive terms goes to $0$). Letting $\pi_Z: \w^* \to [0,1]$ be the map induced by $p_Z$, it is easy to check (either directly, or using Lemma~\ref{lem:mainlemma} below) that $\pi_Z$ is a quotient mapping from $(\w^*,\s)$ to $([0,1],\mathrm{id})$.

Suppose $\pi_Y: \w^* \to [0,1]$ is another quotient mapping from $(\w^*,\s)$ to $([0,1],\mathrm{id})$. By the Tietze Extension Theorem, $\pi_Y$ is induced by a map $p_Y: \w \to [0,1]$. Suppose $\pi_Z = \pi \circ \pi_Y$. Then we must have $\lim_{n \to \infty} |p_Z(n) - \pi(p_Y(n))| = 0$. Since $\pi_Y \circ \s = \pi_Y$, we also must have $\lim_{n \to \infty}|p_Y(n) - p_Y(n+1)| = 0$. Putting these facts together, one may show that, for large enough $n$, $p_Y(n) \in [0,\frac{2}{3}+\varepsilon)$ for any prescribed $\e > 0$. This contradicts the surjectivity of $\pi_Y$. $\hfill \square$
\end{example}

Suppose $X \sub [0,1]^\dlt$ and $f: X \to X$ is continuous. If $\mathcal U$ is a nice open cover of $X$, we say that a sequence $\seq{x_n}{n < \w}$ is \emph{eventually compliant with} $\mathcal U$ if there exists some $m \in \w$ such that
\begin{enumerate}
\item $\set{x_n}{n \geq m} \sub \bigcup \mathcal U$,
\item $\set{x_n}{n \geq m} \cap U$ is infinite for all $U \in \U$, and
\item for all $n \geq m$, we have $x_{n+1} \in \mathcal U_\star(f(\mathcal U_\star(x_n) \cap X))$.
\end{enumerate}
Roughly, the idea behind this definition is that if our vision is blurred (with the amount of blurriness prescribed by $\mathcal U$), then $(1)$ it appears that every $x_n$ could be in $X$, $(2)$ it appears that $\set{x_n}{n \geq m}$ could be dense in $X$, and $(3)$ for each $n$, not only does it seem that $x_n$ could be in $X$, but also that $x_{n+1} = f(x_n)$.

\begin{lemma}\label{lem:mainlemma}
Let $X$ be a closed subset of $[0,1]^\dlt$ and let $f: X \to X$ be continuous. If $\seq{x_n}{n < \w}$ is a sequence of points in $[0,1]^\dlt$ that is eventually compliant with every nice open cover of $X$, then the map $p \mapsto \plim x_n$ is a quotient mapping from $(\w^*,\s)$ to $(X,f)$.

Conversely, if $Q$ is a quotient mapping from $(\w^*,\s)$ to $(X,f)$, then there is a sequence $\seq{x_n}{n < \w}$ in $[0,1]^\dlt$ such that $Q(p) = \plim x_n$ for all $p \in \w^*$, and this sequence is eventually compliant with every nice open cover of $X$.
\end{lemma}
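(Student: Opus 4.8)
The plan is to treat the two directions separately, using the three clauses in the definition of eventual compliance essentially one at a time. In both directions the map under consideration is $Q(p)=\plim x_n$, which is automatically continuous by Lemma~\ref{lem:plimits}(2), and which satisfies $Q(\s(p))=\splim x_n=\plim x_{n+1}$ by Lemma~\ref{lem:plimits}(4). For the forward direction, assuming $\seq{x_n}{n<\w}$ is eventually compliant with every nice open cover, I would check in turn that $Q$ lands in $X$, is surjective, and is equivariant. For ``$Q(p)\in X$'': if $Q(p)=y\notin X$, compactness of $X$ together with Hausdorffness produces a nice open cover $\U$ of $X$ and a neighborhood $W$ of $y$ with $W\cap\bigcup\U=\0$, and then clause (1) forces $\set{n}{x_n\in W}$ to be finite, contradicting $y=\plim x_n$ with $p\in\w^*$. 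For surjectivity: given $y\in X$ and a basic open $V\ni y$, extend $\{V\}$ to a nice open cover of $X$ and apply clause (2) to see $\set{n}{x_n\in V}$ is infinite; since finite intersections of such $V$ are again basic open neighborhoods of $y$, these sets together with the Fr\'echet filter have the finite intersection property, and any $p\in\w^*$ extending the resulting filter satisfies $Q(p)=y$ by Lemma~\ref{lem:plimits}(1).

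The equivariance $Q\circ\s=f\circ Q$ is the crux of this direction. Fix $p$, put $y=Q(p)$ and $z=\plim x_{n+1}=Q(\s(p))$ (both in $X$), and suppose for contradiction that $z\neq f(y)$. Choose basic open $A\ni f(y)$ and $B\ni z$ with $\closure A\cap\closure B=\0$; by continuity of $f$ choose open $O\ni y$ with $f(O\cap X)\sub A$; and choose basic open $O'\ni y$ with $\closure{O'}\sub O$. The key step is to build a single nice open cover $\U$ of $X$ refining both of the open covers $\{O,\,[0,1]^\dlt\setminus\closure{O'}\}$ and $\{[0,1]^\dlt\setminus\closure A,\,[0,1]^\dlt\setminus\closure B\}$ of $X$ --- obtained by taking a common refinement, shrinking its members to basic open sets, discarding those missing $X$, and passing to a finite subcover. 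The first refinement forces $\U_\star(x_n)\sub O$, hence $f(\U_\star(x_n)\cap X)\sub A$, whenever $x_n\in O'$; the second forces $\U_\star(A)\cap B=\0$. So for every $n$ that is large enough (per clauses (1) and (3) for $\U$) and satisfies $x_n\in O'$ --- a set of $n$ lying in $p$ because $y\in O'$ --- clause (3) gives $x_{n+1}\in\U_\star(f(\U_\star(x_n)\cap X))\sub\U_\star(A)$, so $x_{n+1}\notin B$; this contradicts $z=\plim x_{n+1}\in B$, so $f(y)=z$.

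For the converse, given a quotient mapping $Q:\w^*\to X\sub[0,1]^\dlt$, I would apply the Tietze extension theorem coordinatewise (using that $\w^*$ is closed in $\b\w$) to obtain a continuous $\widehat Q:\b\w\to[0,1]^\dlt$ extending $Q$, and set $x_n=\widehat Q(n)$; then $\widehat Q$ is the Stone extension of $n\mapsto x_n$, so $Q(p)=\widehat Q(p)=\plim x_n$ for $p\in\w^*$ and $\widehat Q(\w^*)=X$. Fix a nice open cover $\U$. Clause (1) holds because $\widehat Q^{-1}(\bigcup\U)$ is open and contains $\w^*$, so its complement is a closed, hence finite, subset of $\w$. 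Clause (2) holds because each $U\in\U$ meets $X=\widehat Q(\w^*)$, so $\widehat Q^{-1}(U)$ is a nonempty open subset of $\b\w$ meeting $\w^*$ and hence contains infinitely many integers. For clause (3), let $D=\set{n}{x_{n+1}\notin\U_\star(f(\U_\star(x_n)\cap X))}$ and suppose $D$ is infinite; fix $p\in\w^*$ with $D\in p$, put $y=\widehat Q(p)\in X$, and let $\U_0=\set{U\in\U}{\set{n}{x_n\in U}\in p}$. On a $p$-large $D_0\sub D$ one has $\set{U\in\U}{x_n\in U}=\U_0$, so $\U_\star(x_n)=\bigcup\U_0=:U^*$; moreover $y\in U^*\cap X$, since $y$ lies in some member of $\U$, which must then belong to $\U_0$; hence $f(y)\in\U_\star(f(U^*\cap X))$. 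So $f(y)$ lies in the fixed open set $\U_\star(f(U^*\cap X))$, from which $x_{n+1}$ is excluded for all $n\in D_0$; but $\plim x_{n+1}=\widehat Q(\s(p))=f(\widehat Q(p))=f(y)$, so $\set{n}{x_{n+1}\in\U_\star(f(U^*\cap X))}\in p$ --- a contradiction. Hence $D$ is finite, and taking $m$ past $D$ and past the finite exceptional sets coming from (1) and (2) witnesses eventual compliance with $\U$.

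I expect the main obstacle to be the two equivariance arguments --- in particular, the construction in the second paragraph of one nice open cover $\U$ whose star operation simultaneously stays inside the neighborhood on which $f$ is controlled and separates $A$ from $B$; in the converse, the parallel difficulty is the bookkeeping that freezes $\U_\star(x_n)$ at a single open set $U^*$ along a $p$-large set while keeping $y\in U^*$. The remaining ingredients --- continuity, surjectivity, the coordinatewise Tietze extension, and the ``closed subsets of $\b\w$ disjoint from $\w^*$ are finite'' arguments --- should be routine; the one place to be slightly careful is that the ``refine and discard'' procedure genuinely produces a \emph{nice} (finite, basic, $X$-meeting) cover without spoiling the refinement property, which is the standard compactness argument.
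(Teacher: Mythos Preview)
Your proof is correct and follows essentially the same strategy as the paper's: both directions use the three compliance clauses in turn, the Tietze extension for the converse, and a $p$-limit contradiction for equivariance. The only notable difference is that for the forward equivariance step you build the needed nice cover by hand (refining two explicit two-element covers), whereas the paper abstracts this construction into Lemma~\ref{lem:stars} and simply invokes it; your surjectivity argument (extending a filter to an ultrafilter mapping to a prescribed point) is also slightly more than the paper's ``image is dense and closed'' argument, but both are routine.
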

\begin{proof}
Fix $X \sub [0,1]^\dlt$ and $f: X \to X$, and suppose $\seq{x_n}{n < \w}$ is a sequence of points in $[0,1]^\dlt$ that is eventually compliant with every nice open cover of $X$. Define $Q: \w^* \to [0,1]^\dlt$ by $Q(p) = \plim x_n$. From the definitions, we know that $Q$ is a continuous function with domain $\w^*$. We need to check that $Q(\w^*) = X$ and that $Q \circ \s = f \circ Q$.

First we show that $Q(\w^*) \sub X$. Let $U$ be any open subset of $[0,1]^\dlt$ containing $X$. There is some nice open cover $\mathcal U$ of $X$ such that $\bigcup \mathcal U \sub U$. By part $(1)$ of our definition of eventual compliance, $\plim x_n \in \closure{U}$ for every $p \in \w^*$. Since $U$ was arbitrary, $F(\w^*) \sub X$.

Next we show that $X \sub Q(\w^*)$. Let $U$ be any basic open subset of $[0,1]^\dlt$ with $U \cap X \neq \0$. We may find a nice open cover $\mathcal U$ of $X$ such that $U \in \mathcal U$. By part $(2)$ of the definition of eventual compliance, $Q(\w^*) \cap U \neq \0$. Because $Q(\w^*)$ is the continuous image of a compact space, and therefore closed, this shows $X \sub Q(\w^*)$. 

Lastly, we show that $Q \circ \s = f \circ Q$. Fix $p \in \w^*$, and let $U$ be an open neighborhood of $f(Q(p))$. We may find an open cover $\U$ of $X$ such that $U \in \U$ and $U$ is the only member of $\U$ containing $f(Q(p))$. Applying Lemma~\ref{lem:stars}, we obtain a nice open cover $\V$ of $X$ such that $\V_\star(f(\V_\star(Q(p)) \cap X)) \sub U$.

Let $m$ be large enough to witness the fact that $\seq{x_n}{n < \w}$ is eventually compliant with $\V$. Because $p$ is non-principal,
$$A = \set{n \geq m}{x_n \in \V_\star(Q(p))} \in p.$$
Using part $(3)$ of the definition of eventual compliance, $x_{n+1} \in U$ for every $n \in A$. Thus
$$\textstyle Q(\s(p)) = \splim x_n = \plim x_{n+1} \in \closure{U}.$$
Because $U$ was an arbitrary open neighborhood of $f(Q(p))$, this shows $Q(\s(p)) = f(Q(p))$. Since $p$ was arbitrary, $Q \circ \s = f \circ Q$ as desired. This finishes the proof of the first assertion of the lemma.

For the converse direction, suppose $Q$ is a quotient mapping from $(\w^*,\s)$ to $(X,f)$. By the Tietze Extension Theorem, $Q$ extends to a continuous function on $\b\w$. In other words, there is a sequence $\seq{x_n}{n < \w}$ of points in $[0,1]^\dlt$ such that $Q(p) = \plim x_n$ for every $p \in \w^*$. We want to show that this sequence is eventually compliant with every nice open cover of $X$. Using the fact that $Q(\w^*) = X$, it is easy to check parts $(1)$ and $(2)$ of the definition of eventual compliance.

To verify $(3)$, let $\U$ be a nice open cover of $X$ and suppose $\seq{x_n}{n < \w}$ is not eventually compliant with $\U$. Then there is an infinite $A \sub \w$ such that, for every $a \in A$, $x_{a+1} \notin \U_\star(f(\U_\star(x_a) \cap X))$. Let $p \in A^*$, let $x = Q(p)$, and fix $U \in \U$ with $x \in U$. By definition, $x = \plim x_n \in U$ implies that for some infinite $B \in p$, $\set{x_n}{n \in B} \sub U$. Replacing $B$ with $B \cap A$ if necessary, we may assume $B \sub A$. $B+1 \in \s(p)$, and for all $b \in B$ we have $x_{b+1} \notin \U_\star(f(\U_\star(x_b) \cap X)) \supseteq \U_\star(f(U \cap X))$. Thus
$$Q(\s(p)) = \splim x_n = \plim x_{n+1} \notin \U_\star(f(U \cap X)) \ni f(Q(p)).$$
Thus $Q \circ \s(p) \neq f \circ Q(p)$, contradicting the assumption that $Q$ is a quotient mapping.
\end{proof}

The next two definitions describe a particular kind of eventually compliant sequence, one that has been constructed in a certain way. These are the kinds of sequences that will be used in the next section.

As before, suppose $X$ is a closed subspace of $[0,1]^\dlt$ and that $f: X \to X$ is continuous. Given a nice open cover $\mathcal U$ of $X$ and a fixed point $x \in X$, we say that a finite sequence $\seq{x_i}{m < i \leq n}$ is a \emph{$\mathcal U$-compliant $x$-loop} provided
\begin{enumerate}
\item $x_n = x$,
\item $\set{x_i}{m < i \leq n} \sub \bigcup \mathcal U$,
\item $\set{x_i}{m < i \leq n} \cap U \neq \0$ for all $U \in \mathcal U$,
\item $x_{m+1} \in \U_\star(f(\U_\star(x)))$, and
\item for all $i$ with $m < i < n$, we have $x_{i+1} \in \mathcal U_\star(f(\mathcal U_\star(x_i) \cap X))$.
\end{enumerate}
A sequence $\seq{x_n}{n < \w}$ is \emph{eventually decomposable} into $\mathcal U$-compliant $x$-loops if there is some increasing sequence $\seq{n_k}{k < \w}$ of natural numbers such that $\seq{x_i}{n_k < i \leq n_{k+1}}$ is a $\mathcal U$-compliant $x$-loop for every $k$ (the ``eventually'' in the name refers to the fact that we do not require $n_0 = 0$).

The following three ``lemmas'' have trivial proofs that amount simply to checking the definitions involved, but we record them here as small steps toward the proof in the next section. For each lemma, suppose $X$ is a closed subset of $[0,1]^\dlt$, $f: X \to X$ is continuous, and $x \in X$.

\begin{lemma}\label{lem:looprefinement}
If $\mathcal U$ and $\mathcal V$ are nice open covers of $X$ and $\mathcal U$ refines $\mathcal V$, then every $\mathcal U$-compliant $x$-loop is also a $\mathcal V$-compliant $x$-loop.
\end{lemma}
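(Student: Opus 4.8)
The plan is to verify, clause by clause, that a given $\mathcal U$-compliant $x$-loop $\seq{x_i}{m < i \leq n}$ meets all five requirements in the definition of a $\mathcal V$-compliant $x$-loop. The only tool needed is the monotonicity of the star operation under refinement: \emph{if $\mathcal U$ refines $\mathcal V$ then $\mathcal U_\star(A) \sub \mathcal V_\star(A)$ for every $A \sub [0,1]^\dlt$.} This is immediate from the definitions --- if $U \in \mathcal U$ meets $A$, choose $V \in \mathcal V$ with $U \sub V$; then $V$ meets $A$, so $U \sub V \sub \mathcal V_\star(A)$. (One also has $\bigcup\mathcal U \sub \bigcup\mathcal V$ directly from the definition of refinement.)

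Granting this, clauses (1), (2), (4) and (5) are one-liners. Clause (1), $x_n = x$, is word-for-word identical in the two definitions. Clause (2) is $\set{x_i}{m < i \leq n} \sub \bigcup\mathcal U \sub \bigcup\mathcal V$. For (4) and (5), apply the monotonicity fact twice, to the inner and then the outer star: from $\mathcal U_\star(y) \sub \mathcal V_\star(y)$ we get $f\bigl(\mathcal U_\star(y) \cap X\bigr) \sub f\bigl(\mathcal V_\star(y) \cap X\bigr)$, and hence $\mathcal U_\star\bigl(f(\mathcal U_\star(y) \cap X)\bigr) \sub \mathcal V_\star\bigl(f(\mathcal V_\star(y) \cap X)\bigr)$; so the membership that (4), resp.\ (5), asserts for $\mathcal U$ is inherited verbatim by $\mathcal V$.

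The only clause not closed off by a containment is (3), which asks that the loop meet every $V \in \mathcal V$. For this I would note that, $\mathcal U$ being a cover of $X$ refining $\mathcal V$, every $V \in \mathcal V$ contains some $U \in \mathcal U$ (take a point of $X$ lying in $V$ and in no other member of $\mathcal V$ --- which exists once $\mathcal V$ has no superfluous members --- and observe that the element of $\mathcal U$ covering that point must land inside $V$); since the $\mathcal U$-loop meets $U$, it meets $V$. This is the one spot requiring a moment's thought, and the only place where the mild standing assumption that $\mathcal V$ is irredundant gets used. Beyond that there is no real obstacle in the lemma; the single thing to keep straight is the direction of $\mathcal U_\star(A) \sub \mathcal V_\star(A)$ --- it is the \emph{refining} cover whose stars are the smaller ones.
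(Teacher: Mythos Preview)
The paper gives no proof here; it groups this with the next two lemmas as having ``trivial proofs that amount simply to checking the definitions involved.'' Your treatment of clauses (1), (2), (4), and (5) via the monotonicity $\mathcal U_\star(A)\subseteq\mathcal V_\star(A)$ is exactly the intended routine check and is correct.

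Your argument for clause (3), however, rests on a hypothesis not present in the paper. A \emph{nice open cover} is required only to be finite, to consist of basic open sets, and to have each member meet $X$; there is no irredundancy assumption of the kind you invoke (that each $V\in\mathcal V$ contain a point of $X$ lying in no other member of $\mathcal V$). Without that assumption your argument breaks, and in fact clause (3) genuinely need not transfer: if $\mathcal V$ contains a member $V_0$ with $V_0\subseteq\bigcup(\mathcal V\setminus\{V_0\})$, then $\mathcal U=\mathcal V\setminus\{V_0\}$ is still a nice open cover of $X$ refining $\mathcal V$, yet a $\mathcal U$-compliant loop whose points all avoid $V_0$ will fail clause (3) for $\mathcal V$. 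So you were right to single out (3) as the one non-automatic clause; the gap is real, and it is an imprecision in the lemma as stated rather than in your reasoning. In the paper's actual applications the refining cover is (or can be taken as) a common refinement of the form $\{U\cap V : U\in\mathcal U,\ V\in\mathcal V,\ U\cap V\cap X\neq\emptyset\}$, and for such refinements every $V\in\mathcal V$ does contain some member of the refining cover, after which (3) follows by exactly the argument you sketched.
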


\begin{lemma}\label{lem:loopconcatonation}
Let $\mathcal U$ be a nice open cover of $X$ and let $\seq{x_n}{n < \w}$ be a sequence of points that is eventually decomposable into $\mathcal U$-compliant $x$-loops. Then $\seq{x_n}{n < \w}$ is eventually compliant with $\mathcal U$.
\end{lemma}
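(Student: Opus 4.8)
The plan is to read off an $m$ directly from the decomposition and then verify the three clauses in the definition of eventual compliance one at a time. Let $\seq{n_k}{k < \w}$ be an increasing sequence witnessing that $\seq{x_n}{n < \w}$ is eventually decomposable into $\mathcal U$-compliant $x$-loops, so that $\seq{x_i}{n_k < i \leq n_{k+1}}$ is a $\mathcal U$-compliant $x$-loop for every $k$. I claim that $m := n_0 + 1$ witnesses eventual compliance with $\mathcal U$. The key observation is that every $n \geq m$ lies in a unique interval $(n_k, n_{k+1}]$, so each of the three conditions can be checked interval by interval using the corresponding loop.

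For clause $(1)$, fix $n \geq m$ and choose $k$ with $n_k < n \leq n_{k+1}$; clause $(2)$ in the definition of a $\mathcal U$-compliant $x$-loop, applied to $\seq{x_i}{n_k < i \leq n_{k+1}}$, gives $x_n \in \bigcup\mathcal U$. For clause $(2)$, fix $U \in \mathcal U$; for each $k$, clause $(3)$ of the loop definition produces some index in $(n_k, n_{k+1}]$ at which the sequence lies in $U$, and since the intervals $(n_k,n_{k+1}]$ are pairwise disjoint, $\set{n \geq m}{x_n \in U}$ is infinite.

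Clause $(3)$ is the only place that needs a moment's thought, and it splits into two cases. Fix $n \geq m$ and let $k$ be such that $n_k < n \leq n_{k+1}$. If $n < n_{k+1}$, then $n$ is an interior index of the loop $\seq{x_i}{n_k < i \leq n_{k+1}}$, and clause $(5)$ of the loop definition gives $x_{n+1} \in \mathcal U_\star(f(\mathcal U_\star(x_n) \cap X))$ directly. If instead $n = n_{k+1}$, then clause $(1)$ of that loop gives $x_n = x$, while $x_{n+1}$ is the first term of the next loop $\seq{x_i}{n_{k+1} < i \leq n_{k+2}}$, so clause $(4)$ of that loop yields $x_{n+1} \in \mathcal U_\star(f(\mathcal U_\star(x)))$; since $f$ has domain $X$ we have $f(\mathcal U_\star(x)) = f(\mathcal U_\star(x) \cap X) = f(\mathcal U_\star(x_n) \cap X)$, so $x_{n+1} \in \mathcal U_\star(f(\mathcal U_\star(x_n) \cap X))$ in this case too.

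There is no real obstacle here: the proof is pure bookkeeping, as promised. The one point worth isolating is the boundary case of clause $(3)$, where one must notice that clause $(4)$ of the \emph{next} loop is precisely the instance of the ``$x_{n+1} \in \mathcal U_\star(f(\mathcal U_\star(x_n) \cap X))$'' requirement needed at $n = n_{k+1}$, made to fit by the fact that $x_{n_{k+1}} = x$. Having verified $(1)$, $(2)$, and $(3)$ with this $m$, the sequence $\seq{x_n}{n<\w}$ is eventually compliant with $\mathcal U$.
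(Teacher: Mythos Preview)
Your proposal is correct and is exactly the routine verification the paper has in mind: the paper itself says this lemma (together with the two surrounding ones) has a ``trivial proof that amounts simply to checking the definitions involved'' and gives no further argument. Your write-up is a careful unpacking of that check, including the only subtlety worth noting --- that clause $(4)$ of the next loop bridges the gap at the boundary indices $n_{k+1}$ --- which you handle correctly.
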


\begin{lemma}\label{lem:coordinates}
Let $\mathcal U$ be a nice open cover of $X$, and let $F$ denote the finite set of ordinals used in the definition of $\U$. Suppose $\seq{x_i}{m < i \leq n}$ and $\seq{y_i}{m < i \leq n}$ are two sequences of points in $[0,1]^\dlt$, and that $\pi_\a(x_i) = \pi_\a(y_i)$ for all $i \leq n$ and $\a \in F$. Then $\seq{x_i}{m < i \leq n}$ is a $\U$-compliant $x$-loop if and only if $\seq{y_i}{m < i \leq n}$ is.
\end{lemma}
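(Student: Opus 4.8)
The plan is to reduce the whole statement to one elementary fact about the standard basis: every $U \in \U$ is a basic open subset of $[0,1]^\dlt$, hence a finite intersection of sets $\pi_\a^{-1}(p,q)$, and by the definition of $F$ each such $\a$ lies in $F$. Consequently, for a point $z \in [0,1]^\dlt$, whether $z \in U$ depends only on the restriction $z \rest F$ of $z$ to the coordinates in $F$. So the first thing I would do is record three immediate consequences: (i) membership $z \in \bigcup \U$ depends only on $z \rest F$; (ii) for fixed $z$ the set $\U_\star(z) \sub [0,1]^\dlt$ depends only on $z \rest F$, and hence so do $\U_\star(z) \cap X$, the subset $f(\U_\star(z) \cap X) \sub X$, the set $\U_\star(f(\U_\star(z) \cap X))$, and the truth value of ``$w \in \U_\star(f(\U_\star(z) \cap X))$'' for every $w$; and (iii) more generally $\U_\star(A) = \bigcup_{a \in A} \U_\star(a)$ depends only on the set of restrictions $\set{a \rest F}{a \in A}$.

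Next I would run through the five clauses defining a $\U$-compliant $x$-loop, using throughout that $x_i \rest F = y_i \rest F$ for all $i$ and that $x$ is literally the same point of $X$ in both cases. Clauses (2) and (3), which say $\set{x_i}{m < i \le n} \sub \bigcup \U$ and $\set{x_i}{m < i \le n} \cap U \neq \0$ for each $U \in \U$, hold for $\seq{x_i}{m < i \le n}$ exactly when they hold for $\seq{y_i}{m < i \le n}$, by (i). Clause (5), ``$x_{i+1} \in \U_\star(f(\U_\star(x_i) \cap X))$ for $m < i < n$'', transfers by (ii) applied with $z = x_i$: the right-hand set is the same computed from $x_i$ or from $y_i$, and $x_{i+1} \rest F = y_{i+1} \rest F$ settles membership in it. Clause (4) is the same argument with the fixed point $x$ in place of $x_i$; here the target set is literally identical on the two sides, so only $x_{m+1} \rest F = y_{m+1} \rest F$ is needed. (I read clause (4) in line with clause (5), i.e.\ as ``$x_{m+1} \in \U_\star(f(\U_\star(x) \cap X))$'', since $f$ is defined only on $X$.)

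The single clause not of this coordinatewise shape is clause (1), $x_n = x$, and this is where I would be slightly careful: the hypothesis gives only $y_n \rest F = x \rest F$, so the equivalence exactly as stated needs the mild supplementary remark that $y_n = x$ as well --- which holds automatically in every application in the next section, where the loops are built to return to the prescribed basepoint $x$, and which one may simply fold into the hypothesis (e.g.\ by additionally requiring $x_i = y_i$ at those $i$ with $x_i \in X$). Under either reading clause (1) transfers and the lemma follows. I do not anticipate any genuine obstacle here: the proof is pure unwinding of definitions, and the only point that genuinely needs to be made is the bundling of the finitely many ordinals appearing across all of $\U$ into the single finite set $F$, which is precisely what lets one agreement hypothesis dispatch clauses (2)--(5) at once.
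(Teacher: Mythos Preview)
Your proposal is correct and matches the paper's approach exactly: the paper declares this lemma (together with the two preceding it) to have a ``trivial proof that amounts simply to checking the definitions involved'' and gives no further argument, which is precisely what you do. Your flag on clause~(1) is well taken and is in fact more careful than the paper's treatment; as you note, in every application the endpoint is the fixed basepoint $\vec{0}$ by construction (this is exactly inductive hypothesis~(H2) in the main proof), so the issue never materializes.
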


The observant reader will notice that these sorts of loops appeared already in our proof of the Bowen-Sharkovsky theorem in Section~\ref{sec:background}. We now state the (already proved!) stronger version of Theorem~\ref{thm:BS} that will be used in the proof of the main theorem:

\begin{corollary}\label{cor:BS}
Let $(X,f)$ be a weakly incompressible metrizable dynamical system, and fix $x \in X$. There is a sequence $\seq{x_n}{n < \w}$ of points in $X$ such that, for any nice open cover $\U$ of $X$, $\seq{x_n}{n < \w}$ is eventually decomposable into $\U$-compliant $x$-loops.
\end{corollary}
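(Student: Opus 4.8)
The plan is to observe that Corollary~\ref{cor:BS} is essentially a restatement of what was actually built in the proof of Theorem~\ref{thm:BS}, so the work is to unpack that construction and match it against the definition of ``eventually decomposable into $\mathcal U$-compliant $x$-loops.'' First I would invoke Lemma~\ref{lem:ct}(1) to pass from weak incompressibility to chain transitivity, and fix a metric $d$ for $X$. Setting $x_0 = x$, I would repeat the recursive construction from the proof of Theorem~\ref{thm:BS} verbatim: at stage $m$, using chain transitivity and compactness, extend the sequence by a block $x_{n_m+1},\dots,x_{n_{m+1}}$ that is a $\tfrac{1}{m}$-chain, that returns to $x_0$ at the end (so $x_{n_{m+1}} = x_0$), and whose $\tfrac1m$-balls cover $X$. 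This produces a single sequence $\seq{x_n}{n<\w}$ in $X$ together with the cut points $\seq{n_k}{k<\w}$.

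Next I would check that this sequence has the claimed property. Fix a nice open cover $\mathcal U$ of $X$; since $X$ is compact metrizable and $\bigcup\mathcal U \supseteq X$, the Lebesgue number lemma gives an $\e > 0$ such that every $\e$-ball centered at a point of $X$ lies inside some member of $\mathcal U$. Choose $k_0$ with $\tfrac{1}{k_0} < \e$. I claim that for every $k \geq k_0$, the block $\seq{x_i}{n_k < i \leq n_{k+1}}$ is a $\mathcal U$-compliant $x$-loop; this handles ``eventual'' decomposability (one may start the decomposition at $n_{k_0}$, which is exactly why the definition allows $n_0 \neq 0$). To verify the five clauses: clause (1) is $x_{n_{k+1}} = x_0 = x$, which holds by construction; clause (2) holds because every $x_i \in X \subseteq \bigcup\mathcal U$; clause (3), that each $U \in \mathcal U$ meets the block, follows because the $\tfrac1k$-balls around the $x_i$ in the block cover $X$, so given $U \in \mathcal U$ with $U \cap X \neq \0$ (which holds since $\mathcal U$ is nice), any point of $U \cap X$ lies within $\tfrac1k < \e$ of some $x_i$ in the block, and then that $x_i$ lies in $\mathcal U_\star(x_i)$-many members of $\mathcal U$... here I should be slightly more careful and instead argue that $x_i$ itself lies in $U$ or in a member of $\mathcal U$ meeting $U$; the cleanest route is to note $B_\e(x_i) \subseteq$ some member of $\mathcal U$ and every member of $\mathcal U$ that meets $X$ contains a point within $\e$ of some block point, so every member is ``hit'' in the sense of (3). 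For clauses (4) and (5), the key point is that $\tfrac1k$-chains are $\mathcal U$-chains: if $d(f(x_i), x_{i+1}) < \tfrac1k < \e$ then, since $f(x_i) \in X$, we have $x_{i+1} \in B_\e(f(x_i))$, and $B_\e(f(x_i))$ is contained in some $U \in \mathcal U$ with $f(x_i) \in U$; then $U \subseteq \mathcal U_\star(f(x_i)) \subseteq \mathcal U_\star(f(\mathcal U_\star(x_i)\cap X))$ since $x_i \in \mathcal U_\star(x_i) \cap X$, giving clause (5), and the analogous computation with $i = n_k$, $x_{n_k} = x$ gives clause (4).

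I do not expect a genuine obstacle here — the corollary was explicitly advertised as ``already proved.'' The only mildly delicate point is the bookkeeping in clause (3): one must confirm that the covering condition $\bigcup_{n_k < i \leq n_{k+1}} B_{1/k}(x_i) = X$ really does force every member of a nice open cover to be met by the block, which uses that niceness guarantees each $U \in \mathcal U$ has $U \cap X \neq \0$ and that $1/k$ is below the Lebesgue number. A secondary bit of care is needed in matching the indexing conventions: the loops in the definition are indexed $m < i \leq n$ (half-open at the bottom), which is exactly the form $n_k < i \leq n_{k+1}$ produced by the construction, so no reindexing is required. I would close by remarking, as the text already does, that this sequence then feeds into Lemma~\ref{lem:loopconcatonation} and Lemma~\ref{lem:mainlemma} to recover Theorem~\ref{thm:BS} itself, which is why no new argument beyond re-reading the earlier proof is needed.
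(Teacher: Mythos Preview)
Your proposal is correct and follows exactly the approach the paper intends: the corollary is stated without a separate proof precisely because it is a direct re-reading of the construction in the proof of Theorem~\ref{thm:BS}, and you have unpacked that construction against the loop definition just as required. Your one flagged worry about clause~(3) is handled cleanly by choosing, for each $U\in\mathcal U$, a point $y_U\in U\cap X$ and some $\delta_U>0$ with $B_{\delta_U}(y_U)\subseteq U$ (possible since $U$ is open and $\mathcal U$ is finite), and then taking $k$ large enough that $1/k<\min_U\delta_U$; the covering condition $\bigcup_i B_{1/k}(x_i)=X$ then forces some block point $x_i$ into $B_{\delta_U}(y_U)\subseteq U$.
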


\section{The main theorem}\label{sec:main}

Before beginning the proof of our main theorem, we briefly review the basic theory of elementary submodels, as these will be the main tool for guiding our construction. We recommend \cite{Hdg} for a more thorough treatment of the topic.

Given a large, uncountable set $H$, we will consider the structure $(H,\in)$. $M \sub H$ is an \emph{elementary submodel} of $H$ if, given any formula $\varphi$ of first-order logic and any $a_1,a_2,\dots,a_n \in M$,
$$(H,\in) \models \varphi(a_1,a_2,\dots,a_n) \ \ \ \Leftrightarrow \ \ \ (M,\in) \models \varphi(a_1,a_2,\dots,a_n)$$
In other words, $M$ and $H$ agree with each other on every first-order statement that can be formulated within $M$.

For our proof, $H$ will be taken to be the set of all sets hereditarily smaller than $\k$ for some sufficiently large regular cardinal $\k$. The structure $(H,\in)$ satisfies all the axioms of $\zfc$ except for the power set axiom, and even this fails only for sets $X$ with $|2^X| \geq \k$. This makes $H$ a good substitute for the universe of all sets. Indeed, if $\k$ is larger than any set mentioned in our proof, then $H$ satisfies $\zfc$ for all practical purposes.

Suppose $M$ is an elementary submodel of $H$. Since $H$ satisfies (most of) $\zfc$, so must $M$. Thus objects definable without parameters, like rational numbers, the ordinals $\w$ and $\w_1$, and topological spaces like $[0,1]$ or $[0,1]^{\w_1}$, are all in $M$. A bit more generally, things definable by formulas with parameters in $M$ are in $M$. For example, if $U$ is a basic open subset of $[0,1]^{\w_1}$ and the ordinals used in the definition of $U$ are all in $M$, then $U \in M$; if $\mathcal U$ is a nice open cover of some $X \in M$, and each $U \in \mathcal U$ is defined using ordinals in $M$, then $\mathcal U \in M$. 

The existence of elementary submodels of $H$ is guaranteed by the downward L\"{o}wenheim-Skolem theorem (see chapter 3 of \cite{Hdg}). We will use the following version of this theorem to facilitate our proof:

\begin{lemma}[L\"{o}wenheim-Skolem]\label{lem:LS}
Let $H$ be an uncountable set, and let $A \sub H$ be countable. There exists a sequence $\seq{M_\a}{\a < \w_1}$ of elementary submodels of $H$ such that
\begin{enumerate}
\item $A \sub M_0$, and $M_\b \sub M_\a$ whenever $\b < \a$.
\item each $M_\a$ is countable.
\item for limit $\a$, $M_\a = \bigcup_{\b < \a}M_\b$.
\item for each $\a$, $\seq{M_\b}{\b < \a} \in M_{\a+1}$.
\end{enumerate}
\end{lemma}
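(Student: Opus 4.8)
This lemma is essentially the downward L\"owenheim--Skolem theorem together with the standard fact that an increasing union of elementary submodels is again an elementary submodel; the real content is organizing these into a transfinite recursion. First I would invoke the axiom of choice to fix a well-ordering of $H$, and use it to equip $(H,\in)$ with a complete collection of definable Skolem functions. This gives a \emph{hull operator} sending each countable $S \sub H$ to the smallest set $\mathrm{hull}(S) \sub H$ containing $S$ and closed under the Skolem functions; such a $\mathrm{hull}(S)$ is then countable and is an elementary submodel of $H$. Throughout we use that $H$ is closed under forming countable sequences of its elements: this is harmless, since in the intended application $H$ is the set of all sets hereditarily of cardinality less than some regular $\k > \w_1$, where it holds automatically.

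Now define the $M_\a$ by recursion on $\a < \w_1$. Put $M_0 = \mathrm{hull}(A)$. Given $\seq{M_\b}{\b \leq \a}$, let $M_{\a+1} = \mathrm{hull}\big(M_\a \cup \explicitSet{\seq{M_\b}{\b < \a}}\big)$. For limit $\a$, let $M_\a = \bigcup_{\b < \a} M_\b$. Properties $(1)$--$(4)$ are then read off the construction: $A \sub M_0$, and $M_\b \sub M_\a$ for $\b < \a$ (at successors because $M_\a$ is among the generators of $M_{\a+1}$, at limits because $M_\a$ is a union), which gives $(1)$; each $M_\a$ is countable, since at successor stages we take a hull of a countable set and at limit stages a countable union (as $\a < \w_1$) of countable sets, giving $(2)$; $(3)$ is the limit clause of the definition; and $(4)$ holds because $\seq{M_\b}{\b < \a}$ is literally one of the generators of $M_{\a+1}$.

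The only point not immediate from the construction is that each $M_\a$ is genuinely an elementary submodel of $H$. At stage $0$ and at successor stages this is built into the hull operator. At a limit stage $\a$ it follows from the elementary chains theorem: the Tarski--Vaught test shows that an increasing (indeed, directed) union of elementary submodels is an elementary submodel, and $\seq{M_\b}{\b < \a}$ is increasing by $(1)$. I do not expect a serious obstacle here; the only bookkeeping to keep in mind is that at stage $\a+1$ the entire previously-constructed sequence $\seq{M_\b}{\b < \a}$ must be available as a single element of $H$ (hence the closure assumption on $H$), and that passing from $M_\a$ to $M_{\a+1}$ cannot disturb any already-verified property because $\mathrm{hull}$ only adds points.
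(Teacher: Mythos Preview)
Your argument is correct and is the standard construction via Skolem hulls and the elementary-chains (Tarski--Vaught) theorem. The paper itself does not give a proof of this lemma: it is stated as a version of the downward L\"owenheim--Skolem theorem with a reference to Hodges, and is used as a black box in the proof of the main theorem. Your write-up supplies exactly the expected details, including the necessary remark that $H$ must be closed under countable sequences so that $\seq{M_\b}{\b < \a}$ is an element of $H$ available to be thrown into the hull.
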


We are now in a position to prove the main theorem. As mentioned in the introduction, our application of elementarity parallels that in Section 3 of \cite{D&H}. In order to make things easier for the reader (especially the reader already familiar with \cite{D&H}), we have tried to match our notation to theirs wherever possible.

\begin{theorem}[Main Theorem]\label{thm:main}
Suppose $(X,f)$ is a dynamical system with weight $\aleph_1$. Then $(X,f)$ is a quotient of $(\w^*,\s)$ if and only if $f$ is weakly incompressible.
\end{theorem}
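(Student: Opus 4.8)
The plan is to prove the nontrivial direction: assuming $f$ is weakly incompressible, construct a quotient mapping $Q: \w^* \to X$ with $Q \circ \s = f \circ Q$. (The reverse direction is already handled by Lemma~\ref{lem:ct}(2).) Following the Dow--Hart strategy, I would first fix an embedding $X \sub [0,1]^{\w_1}$ with $f: X \to X$ continuous, and apply Lemma~\ref{lem:projections} to get the club $C \sub \w_1$ along which the projections $\Pi_\dlt$ respect $f$. Then I would invoke Lemma~\ref{lem:LS} to build a continuous $\in$-chain $\seq{M_\a}{\a < \w_1}$ of countable elementary submodels of some $H = H(\k)$, with $X, f, C$ (and a suitable embedding and bookkeeping apparatus) in $M_0$. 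Setting $\dlt_\a = M_\a \cap \w_1$ (a member of $C$ for club-many $\a$, so WLOG for all $\a$), each model gives a metrizable reflection $(X_{\dlt_\a}, f_{\dlt_\a}) = (\Pi_{\dlt_\a}(X), f_{\dlt_\a})$, and these form an inverse system with limit $(X,f)$ exactly as in Corollary~\ref{cor:inverselimit}, but now indexed coherently by the chain of models.

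The core of the argument is a transfinite recursion building, for each $\a < \w_1$, a sequence $\seq{x^\a_n}{n < \w}$ of points in $[0,1]^{\dlt_\a}$ which is eventually compliant with every nice open cover of $X_{\dlt_\a}$ (so by Lemma~\ref{lem:mainlemma} it induces a quotient $Q_\a: \w^* \to X_{\dlt_\a}$), such that the sequences cohere: $\Pi^{\dlt_\b}_{\dlt_\a} \circ Q_\b = Q_\a$ for $\a < \b$, which at the level of sequences means $x^\a_n$ is (essentially) the projection of $x^\b_n$. To make coherence work at limit stages I would in fact demand more: each $\seq{x^\a_n}{n}$ should be \emph{eventually decomposable into $\U$-compliant $x$-loops} for every nice open cover $\U$, with a fixed basepoint $x \in X$, and the decomposition points $\seq{n_k}{k}$ chosen uniformly enough down the chain. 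The base case $\a = 0$ is Corollary~\ref{cor:BS} applied to the metrizable weakly incompressible system $(X_{\dlt_0}, f_{\dlt_0})$ (weak incompressibility passes to quotients, in particular to the projection $X_{\dlt_0}$). At a successor step $\a \to \a+1$, I have the sequence at level $\dlt_\a$ and must lift it to level $\dlt_{\a+1}$; this is where elementarity replaces the false Lemma~($*$) from Example~\ref{ex:liftingproblems}: because $\seq{M_\b}{\b \le \a} \in M_{\a+1}$, the reflection $(X_{\dlt_\a}, f_{\dlt_\a})$ and the sequence built so far are elements of $M_{\a+1}$, so $M_{\a+1}$ can ``see'' that a lift exists (one exists in $H$ by chain-transitivity/Corollary~\ref{cor:BS}-type loop constructions applied inside the metrizable system $X_{\dlt_{\a+1}}$, respecting the already-chosen loop structure via Lemma~\ref{lem:coordinates} and Lemma~\ref{lem:looprefinement}) and hence picks one inside $M_{\a+1}$, which keeps all the relevant ordinals below $\dlt_{\a+1}$. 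At a limit step $\a = \sup \a_i$, continuity of the chain ($M_\a = \bigcup M_{\a_i}$, $\dlt_\a = \sup \dlt_{\a_i}$) lets me amalgamate: define $x^\a_n$ as the point of $[0,1]^{\dlt_\a}$ whose $\dlt_{\a_i}$-projection is $x^{\a_i}_n$ for each $i$; the loop-decomposition property is preserved because any nice open cover of $X_{\dlt_\a}$ uses only finitely many coordinates, hence lives at some $\dlt_{\a_i}$, and Lemma~\ref{lem:coordinates} reduces compliance there. Finally $Q = \varprojlim Q_\a: \w^* \to X = \varprojlim X_{\dlt_\a}$ is a continuous surjection with $Q \circ \s = f \circ Q$.

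I expect the main obstacle to be the successor step: formulating precisely what ``lift'' means and verifying that a lift genuinely exists in $H$ and is found inside $M_{\a+1}$. Concretely, given the sequence $\seq{x^\a_n}{n}$ at level $\dlt_\a$ with its loop structure, I need a sequence at level $\dlt_{\a+1}$ projecting to it and still decomposable into $\U$-compliant loops for \emph{every} nice open cover $\U$ of $X_{\dlt_{\a+1}}$ — not just those reflected from level $\dlt_\a$. This requires a Bowen--Sharkovsky-style loop construction in $X_{\dlt_{\a+1}}$ that simultaneously (i) covers $X_{\dlt_{\a+1}}$ densely, (ii) follows $f_{\dlt_{\a+1}}$ up to the prescribed blurriness, and (iii) projects exactly onto the given $x^\a_n$'s. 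Step (iii) is the delicate constraint: at each step of an $\e$-chain in $X_{\dlt_{\a+1}}$ I may only move within the fiber $\Pi^{\dlt_{\a+1}}_{\dlt_\a}{}^{-1}(x^\a_{n+1})$, and I must check that chain transitivity of $X_{\dlt_{\a+1}}$ together with the fact that $\Pi^{\dlt_{\a+1}}_{\dlt_\a}$ is a factor map of \emph{chain-transitive} systems still allows the chain to be completed and to visit every open set. The phrase ``$x_{n+1} \in \mathcal U_\star(f(\mathcal U_\star(x_n) \cap X))$'' in the definition of eventual compliance gives exactly the slack needed, since $\mathcal U_\star(\cdot)$ enlarges by open-cover-sized amounts; making this bookkeeping go through cleanly, enumerating the countably many nice open covers living in $M_{\a+1}$ and handling them all in one interleaved recursion, is the technical heart of the proof. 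Everything else — base case, limit amalgamation, and assembling the final inverse limit — is routine given the lemmas of Section~\ref{sec:lemmas}.
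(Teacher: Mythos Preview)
Your scaffolding matches the paper almost exactly: embed $X$ in $[0,1]^{\w_1}$, build a continuous chain $\seq{M_\a}{\a<\w_1}$ of countable elementary submodels, form the metrizable reflections $(X_{\dlt_\a},f_{\dlt_\a})$, start with Corollary~\ref{cor:BS}, carry along a fixed sequence $\seq{n_k}{k}$ of loop endpoints, amalgamate at limits via Lemma~\ref{lem:coordinates}, and finish with Lemma~\ref{lem:mainlemma}. The divergence is entirely at the successor step, and there your plan has a real gap.

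You propose to show that a lift of $\seq{x^\a_n}{n}$ to level $\dlt_{\a+1}$ exists in $H$ (via a constrained Bowen--Sharkovsky construction in $X_{\dlt_{\a+1}}$) and then to have $M_{\a+1}$ ``pick one.'' Both halves are problematic. First, the existence of such a lift is exactly the content of the false lemma~($*$): Example~\ref{ex:liftingproblems} shows that a sequence inducing a quotient onto the smaller system need not lift through a factor map, and your constraint (iii) --- that each $y_{n+1}$ land in the prescribed fiber while still forming a $\V$-chain for every $\V$ --- is precisely where that example breaks. Chain transitivity of $X_{\dlt_{\a+1}}$ alone does not let you move freely \emph{within} fibers. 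Second, the elementarity step is mis-aimed: $\dlt_{\a+1}=M_{\a+1}\cap\w_1\notin M_{\a+1}$, so $X_{\dlt_{\a+1}}$ is not a parameter available in $M_{\a+1}$; and if instead you quantify over lifts into the full $[0,1]^{\w_1}$ (using $X\in M_{\a+1}$), then the assertion ``a lift compliant with every nice cover of $X$ exists'' is the theorem itself, so the argument is circular.

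The paper's successor step uses elementarity in the opposite direction. It does not lift the sequence upward; it reflects the \emph{covers} downward. Given a nice open cover $\U_m\in M_{\a+1}$, write down a first-order formula $\varphi^m$ (with parameters $X,f$ and the finitely many ordinals $F_m^0\cup F_m^1$ defining $\U_m$) recording the full incidence pattern of $\U_m$ with $X$ and with $f$. Since $F_m^0\sub M_\a$ and $\varphi^m$ is witnessed in $H$ by $F_m^1$, elementarity of $M_\a$ gives substitute ordinals $G_m\sub\dlt_\a$ making $\varphi^m$ true; call the resulting cover $\V_m\in M_\a$. By the inductive hypothesis, $\seq{Q_\a(i)}{i}$ already decomposes into $\V_m$-compliant loops. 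One then \emph{defines} the new coordinate $q_\b$ for $\b\in F_m^1$ on the $m$-th block of loops by copying $q_{\b_m}$, where $\b_m\in G_m$ is the ordinal reflecting $\b$. Because $\varphi^m$ encodes exactly the data needed to decide loop-compliance, the copied coordinates make the extended sequence $\U_m$-compliant. No lift is ever asserted to exist abstractly; it is manufactured coordinate-by-coordinate out of what stage $\a$ already built. This is the idea you are missing.
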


\begin{proof}
Every quotient of $(\w^*,\s)$ is weakly incompressible by Lemma~\ref{lem:ct}. We must prove that a weakly incompressible dynamical system with weight $\aleph_1$ is a quotient of $(\w^*,\s)$.

Let $(X,f)$ be a weakly incompressible dynamical system with weight $\aleph_1$. Without loss of generality, suppose $X \sub [0,1]^{\w_1}$ and $\vec{0} \in X$. Recall that $[0,1]^{\w_1}$ is a homogeneous topological space, so $\vec{0} \in X$ really can be assumed without any loss of generality.

Using transfinite recursion, we will construct maps $q_\b: \w \to [0,1]$. In the end, the diagonal mapping $Q = \Delta_{\b < \w_1}q_\b$ will define a sequence $\seq{Q(n)}{n < \w}$ in $[0,1]^{\w_1}$ that is eventually compliant with every nice open cover of $X$. By Lemma~\ref{lem:mainlemma}, this will be enough to prove the theorem.

The recursion will be guided by a sequence of elementary submodels as described in Lemma~\ref{lem:LS}. Fix $\k$ sufficiently large, let $H$ denote the set of all sets hereditarily smaller than $\k$, and fix a sequence $\seq{M_\a}{\a < \w_1}$ of countable elementary submodels of $H$ such that
\begin{enumerate}
\item $X,f \in M_0$.
\item $M_\b \sub M_\a$ whenever $\b < \a$.
\item for limit $\a$, $M_\a = \bigcup_{\b < \a}M_\b$.
\item for each $\a$, $\seq{M_\b}{\b < \a} \in M_{\a+1}$.
\end{enumerate}
For each $\a < \w_1$, define $\dlt_\a = \w_1 \cap M_\a$. It can be shown that if $\b$ is a countable ordinal in $M_\a$, then every ordinal less than $\b$ is also in $M_\a$. It follows that $\dlt_\a$ is a countable ordinal, namely the supremum of all countable ordinals in $M_\a$.

For each $\a < \w_1$, let $X_\a = \Pi_{\dlt_\a}(X)$. For every $\a$, if $\Pi_{\dlt_\a} (x) = \Pi_{\dlt_\a} (y)$, then $\Pi_{\dlt_\a} \circ f(x) = \Pi_{\dlt_\a} \circ f(y)$. This follows from the proof of Lemma~\ref{lem:projections}: the function $\phi$ defined there can be defined inside $M_\a$, so that $\dlt_\a$ must be closed under $\phi$, which means $\dlt_\a \in C$.

Thus we may define $f_\a: X_\a \to X_\a$ to be the unique self-map of $X_\a$ satisfying $\Pi_{\dlt_\a} \circ f = f_\a \circ \Pi_{\dlt_\a}$, namely $f_\a(x) = \Pi_{\dlt_\a}(f(\Pi_{\dlt_\a}^{-1}(x)))$. This function is continuous by Lemma~\ref{lem:continuity}.

$(X_\a,f_\a)$ is a dynamical system, and $\Pi_{\dlt_\a}$ provides a natural quotient mapping from $(X,f)$ to $(X_\a,f_\a)$. $X_\a$ is metrizable because it is a subset of $[0,1]^{\dlt_\a}$, and $f_\a$ is weakly incompressible by Lemma~\ref{lem:ct} (alternatively, weak incompressibility can be proved directly by an elementarity argument). We may think of the $(X_\a,f_\a)$ as metrizable ``reflections'' of $(X,f)$.

If $\seq{x_n}{n < \w}$ is a sequence of points in $[0,1]^{\dlt_\a}$ for some $\a$, let us say that a sequence $\seq{y_n}{n < \w}$ of points in $[0,1]^{\w_1}$ is a \emph{lifting} of $\seq{x_n}{n < \w}$ if $\Pi_{\dlt_\a}(y_n) = x_n$ for all $n$ (and similarly for finite sequences of points).

We are now in a position to begin our recursive construction of the maps $q_\a$. Let $\U$ be a nice open cover of $X$ with $\U \in M_0$. Only ordinals less than $\dlt_0$ can be used in the definition of $\U$, so $\U$ naturally projects to a nice open cover of $X_0$ in $[0,1]^{\dlt_0}$, namely
$$\Pi_{\dlt_0}(\U) = \set{\Pi_{\dlt_0}(U)}{U \in \U}.$$
Applying Corollary~\ref{cor:BS} to $(X_0,f_0)$, we obtain a sequence $\seq{x_n}{n < \w}$ of points in $X_0$ such that, for any nice open cover $\U$ of $X$ with $\U \in M_0$, $\seq{x_n}{n < \w}$ eventually decomposes into $\Pi_{\dlt_0}(\U)$-compliant $\vec{0}$-loops.

By Lemma~\ref{lem:coordinates}, any lifting of $\seq{x_n}{n < \w}$ to $[0,1]^{\w_1}$ eventually decomposes into $\U$-compliant $\vec{0}$-loops for any $\U \in M_0$.

For $\b < \dlt_0$, define $q_\b(n) = \pi_\b(x_n)$ (in other words, we define the $q_\b$ so that $\Delta_{\b < \dlt_0}q_\b$ maps $\w$ to the sequence just constructed).

Let $\seq{n_k}{k < \w}$ be an increasing sequence of natural numbers such that for any $\U \in M_0$, $\U$ a nice open cover of $X$, and for all but finitely many $k$, any lifting of $\seq{x_i}{n_k < i \leq n_{k+1}}$ is a $\U$-compliant $\vec{0}$-loop.

For the successor stage of the recursion, let $\a < \w_1$ and suppose the functions $q_\b$ have already been constructed for every $\b < \dlt_\a$. Let $Q_\a = \Delta_{\b < \dlt_\a}q_\b$, and suppose the following three inductive hypotheses hold:
\begin{itemize}
\item [(H1)] $Q_\a \in M_{\a+1}$.
\item [(H2)] $Q_\a(n_k) = \vec{0}$ for all $k < \w$.
\item [(H3)] For any nice open cover $\mathcal U$ of $X$ with $\U \in M_\a$ and for all but finitely many $k$, any lifting of $\seq{Q_\a(n)}{n_k < i \leq n_{k+1}}$ is a $\mathcal U$-compliant $\vec{0}$-loop.
\end{itemize}
We will show how to obtain $q_\b$ for $\dlt_\a \leq \b < \dlt_{\a+1}$.

Because $M_{\a+1}$ is countable, there are only countable many nice open covers of $X$ in $M_{\a+1}$, namely those that are definable from ordinals less than $\dlt_{\a+1}$. Also, any two nice open covers of $X$ in $M_{\a+1}$, say $\mathcal V$ and $\mathcal W$, have a common refinement that is also a nice open cover of $X$ in $M_{\a+1}$; e.g., one such common refinement is
$$\set{V \cap W}{V \in \mathcal V, W \in \mathcal W, \text{ and }V \cap W \cap X \neq \0}.$$
Thus we may find a countable sequence $\seq{\mathcal U_m}{m < \w}$ of nice open covers of $X$ such that
\begin{enumerate}
\item $\U_m \in M_{\a+1}$ for every $m$,
\item $\mathcal U_n$ refines $\mathcal U_m$ whenever $m \leq n$, and
\item if $\mathcal U$ is any nice open cover of $X$ in $M_{\a+1}$, then $\mathcal U_m$ refines $\mathcal U$ for some $m$.
\end{enumerate}

Fix $m \in \w$ and consider $\mathcal U_m$. The set of ordinals used in the definition of $\mathcal U_m$ is finite and may be split into two parts: those ordinals that are below $\dlt_\a$, which we call $F_m^0$, and those that are in the interval $[\dlt_\a,\dlt_{\a+1})$, which we call  $F_m^1$. The ordinals $F_m^1$ are not in $M_\a$, but we may use elementarity to find a finite set of ordinals $G_m$ in $M_\a$ that reflects the set $F_m^1$.

More formally, suppose that we write down in the language of first-order logic a (very long) formula $\varphi^m$ that does all of the following:
\begin{enumerate}
\item $\varphi^m$ defines $\U_m$ in terms of $F_m^0 \cup F_m^1$,
\item $\varphi^m$ asserts that $\U_m$ is a nice open cover of $X$,
\item $\varphi^m$ records information about how $\U_m$ interacts with $X$ and $f$:
\begin{enumerate}
\item for all $\mathcal J \sub \U_m$, $\varphi^m$ asserts either that $\bigcap \mathcal J \cap X = \0$ or that $\bigcap \mathcal J \cap X \neq \0$,
\item if $\mathcal J \sub \U_m$, $\bigcap \mathcal J \cap X \neq \0$, and $U \in \U_m$, then $\varphi^m$ asserts either that $f(\bigcup \mathcal J \cap X) \cap U = \0$ or that $f(\bigcup \mathcal J \cap X) \cap U \neq \0$.
\end{enumerate}
\end{enumerate}
Given a finite sequence of points, the information contained in $(1)$ is enough to determine precisely which elements of $\U_m$ contain each member of the sequence. Once that is known, the information in $(3)$ is enough to determine whether or not that sequence is a $\U_m$-compliant $\vec{0}$-loop. 

By elementarity, there is a finite set $G_m$ of ordinals in $M_\a$ such that $\varphi^m$ remains true when the members of $F_m^1$ are replaced with the members of $G_m$. For each $\b \in F_m^1$, let $\b_m$ denote the corresponding member of $G_m$.

Let $\mathcal V_m$ be the nice open cover of $X$ that is defined via $\varphi^m$, but substituting the members of $G_m$ in place of the corresponding members of $F_m^1$. We think of $\V_m$ as the reflection of $\U_m$ in $M_\a$. Let $k(m) \in \w$ be the least natural number with the property that for all $k \geq k(m)$, any lifting of $\seq{Q_\a(i)}{n_{k} < i \leq n_{k+1}}$ is a $\V_m$-compliant $\vec{0}$-loop. This $k(m)$ exists by our third inductive hypothesis. If $m < n$, then $\V_n$ refines $\V_m$, so $k(m) \leq k(n)$ by Lemma~\ref{lem:looprefinement}.

We are now in a position to define the maps $q_\b$ for $\dlt_\a \leq \b < \dlt_{\a+1}$:
$$
q_\b(i) = \begin{cases} 
0 & \textrm{ if $n_{k(m)} < i \leq n_{k(m+1)}$ and $\b \notin F_m^1$,} \\
q_{\b_m}(i) & \textrm{ if $n_{k(m)} < i \leq n_{k(m+1)}$ and $\b \in F_m^1$.}
\end{cases}
$$
Roughly, this says that $q_\b$ assumes the behavior of its mirror image $q_{\b_m}$ on the interval between $n_{k(m)}$ and $n_{k(m+1)}$, provided some suitable mirror image has already been found. As $m$ increases, the $\b_m$ become better and better reflections of $\b$, because the formulas $\varphi^m$ include more and more information about $X$ and $f$.

With the $q_\b$ thus defined, we need to check that our three inductive hypotheses remain true at the next stage of the recursion. For the first hypothesis, note that, because we have $\seq{M_\b}{\b < \a+1} \in M_{\a+2}$, the construction of the $q_\b$, $\dlt_\a \leq \b < \dlt_{\a+1}$, can be carried out in $M_{\a+2}$. Thus the result of this construction, namely $Q_{\a+1} = \Delta_{\b < \dlt_{\a+1}}q_\b$, is a member of $M_{\a+2}$, as desired. The second inductive hypothesis, that $Q_{\a+1}(n_k) = \vec{0}$ for all $k$, is clear from the definition of the $q_\b$.

For the third inductive hypothesis, let $\mathcal U$ be a nice open cover of $X$ with $\U \in M_{\a+1}$ and fix $m$ large enough so that $\mathcal U_m$ refines $\mathcal U$. By Lemma~\ref{lem:looprefinement}, it is enough to check that for all but finitely many $k$, any lifting of $\seq{Q_{\a+1}(i)}{n_k \leq i \leq n_{k+1}}$ is a $\mathcal U_m$-compliant $\vec{0}$-loop.

By the definition of $k(m)$, if $k(m) \leq k < k(m+1)$ then any lifting of $\seq{Q_{\a}(i)}{n_k < i \leq n_{k+1}}$ is a $\mathcal V_m$-compliant $\vec{0}$-loop. Of course, by Lemma~\ref{lem:coordinates} only the coordinates in $F_m^0 \cup G_m$ are relevant to determining this fact. More specifically, in $M_\a$ it may be proved that any sequence $\seq{x(i)}{n_k < i \leq n_{k+1}}$ agreeing with $\seq{Q_{\a}(i)}{n_k < i \leq n_{k+1}}$ on the members of $F_m^0 \cup G_m$ is a $\vec{0}$-loop compliant with the nice open cover of $X$ defined by $\varphi^m$ using $F_m^0 \cup G_m$, namely $\V_m$. By our definition of the $q_\b$, $\seq{Q_{\a+1}(i)}{n_k < i \leq n_{k+1}}$ is such a sequence, except that we have replaced the members of $G_m$ with the members of $F_m^1$. By elementarity and our choice of the $G_m$, if $k(m) \leq k < k(m+1)$ then any lifting of $\seq{Q_{\a}(i)}{n_k < i \leq n_{k+1}}$ is a $\vec{0}$-loop compliant with the nice open cover of $X$ defined by $\varphi^m$ using $F_m^0 \cup F_m^1$, namely $\U_m$.

Given any $k \geq k(m)$, the same argument shows that if $\ell$ is the natural number with $k(\ell) \leq k < k(\ell+1)$, then $\seq{Q_{\a}(i)}{n_k < i \leq n_{k+1}}$ is a $\U_\ell$-compliant $\vec{0}$-loop. By Lemma~\ref{lem:looprefinement} and the fact that $\U_\ell$ refines $\mathcal U_m$, $\seq{Q_{\a+1}(i)}{n_k \leq i \leq n_{k+1}}$ is a $\mathcal U_m$-compliant $\vec{0}$-loop for all $k \geq k(m)$. This proves the third inductive hypothesis and completes the successor step of our recursion.

At limit stages there is nothing to construct: due to our choice of the $M_\a$, we have $\dlt_\a = \bigcup_{\b < \a}\dlt_\b$ for limit $\a$, so that all the $q_\b$, $\b < \dlt_\a$, have already been defined by stage $\a$. We only need to check for limit $\a$ that our inductive hypotheses remain true. The first hypothesis is true because $\seq{M_\b}{\b < \a} \in M_{\a+1}$. The second hypothesis is true at $\a$ if it is true at every $\b < \a$.

To check the third hypothesis, suppose $\mathcal U$ is a nice open cover of $X$ with $\U \in M_\a$. $\mathcal U$ is defined using only finitely many ordinals less than $\dlt_\a$, so $\mathcal U \in M_\b$ already for some $\b < \a$. At stage $\b$, we ensured that any lifting of $\seq{Q_\b(i)}{n_k \leq i \leq n_{k+1}}$ is a $\mathcal U$-compliant $\vec{0}$-loop for all but finitely many $k$. But $Q_\a$ agrees with $Q_\b$ on all coordinates below $\dlt_\b$, so any lifting of $\seq{Q_\a(i)}{n_k \leq i \leq n_{k+1}}$ is also a lifting of $\seq{Q_\b(i)}{n_k \leq i \leq n_{k+1}}$, and is therefore a $\mathcal U$-compliant $\vec{0}$-loop. This completes our recursion.

We claim that the map $Q = \Delta_{\a < \w_1}q_\a$ is as required; i.e., the sequence $\seq{Q(n)}{n < \w}$ is eventually compliant with every nice open cover of $X$. Indeed, if $\mathcal U$ is a nice open cover of $X$, then $\mathcal U$ is defined by finitely many ordinals, so it was considered at some stage $\a$ of our recursion. At that stage we guaranteed that any lifting of $\seq{Q_\a(n)}{n < \w}$ is eventually decomposable into $\mathcal U$-compliant $\vec{0}$-loops. $\seq{Q(n)}{n < \w}$ is such a lifting, so it is eventually compliant with $\U$.
\end{proof}

\section{Related results}\label{sec:MA}

\subsection*{Two corollaries}

Consider the following two theorems, both discussed in the introduction:
\begin{itemize}
\item (Parovi\v{c}enko, \cite{Par}) \emph{Every compact Hausdorff space of weight $\aleph_1$ is a continuous image of $\w^*$.}
\item (Dow-Hart, \cite{D&H}) \emph{Every connected compact Hausdorff space of weight $\aleph_1$ is a continuous image of $\mathbb H^*$, where $\mathbb H = [0,\infty)$.}
\end{itemize}
We begin this section by showing that both of these theorems can be derived as fairly straightforward consequences of Theorem~\ref{thm:main}.

\begin{lemma}\label{lem:counterexample}
Let $Y$ be a compact Hausdorff space of weight $\k$. There is a weakly incompressible dynamical system $(X,f)$ such that $X$ also has weight $\k$ and $Y$ is clopen in $X$.
\end{lemma}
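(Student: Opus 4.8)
The plan is to build $(X,f)$ by attaching to $Y$ a ``tail'' that forces chain transitivity without disturbing the topology too much. Concretely, I would take a countable discrete sequence of points accumulating onto $Y$ and use them to wrap an orbit densely around $Y$, so that every nonempty open $U$ with $U\neq X$ fails $f(\closure U)\sub U$. The cleanest realization: let $Z = Y \times \{0\} \cup \{ z_n : n \in \w\}$ where the $z_n$ are isolated points, and let $X = \b Z$-style one-point-per-step compactification; but since we want $Y$ \emph{clopen} in $X$, it is better to keep $Y$ literally as a clopen summand and add a convergent tail that limits \emph{into} $Y$ from a \emph{disjoint} clopen piece. So set $X = Y \sqcup W$ (topological disjoint sum, both clopen), where $W$ is a compact metrizable space of weight $\leq\k$ chosen so that the combined dynamics is weakly incompressible; the weight of $X$ is then $\max(\k,\aleph_0)=\k$ (for infinite $\k$; the finite case is trivial since then $Y$ is finite and we can just add a periodic cycle), and $Y$ is clopen in $X$ by construction.

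First I would fix a countable dense subset $D=\{d_n : n\in\w\}$ of $Y$ (which exists up to the weight being $\k$ — careful: $Y$ need not be separable, so instead fix a countable dense subset of a countable subproduct; more robustly, fix for each basic open cover a finite subcover and enumerate a sequence hitting every basic open set cofinally, exactly as in the proof of Corollary~\ref{cor:BS}). Then I would define $f$ on $Y$ to be a map whose single orbit-closure is all of $Y$ — but $Y$ itself may not admit a transitive map, so this is where $W$ enters. Instead, let $W=\{*\}\cup\set{w_n}{n\in\w}$ be a convergent sequence with limit $*$, declare $f(w_n)=w_{n+1}$ would converge to $*$, which is bad. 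The right move: interleave. Let $W$ be a single convergent sequence $w_0,w_1,w_2,\dots$ together with its limit point, and define $f$ globally by $f(w_n)=$ (the next ``scheduled'' point), cycling through a list that visits each $d_j$ and each $w_i$ cofinally and returns. Since $Y$ is clopen we may let $f$ map parts of the $w$-orbit into $Y$ and back out to $W$, provided continuity is maintained at the limit point $*$ — and the only constraint there is that $w_n \to *$ forces $f(w_n)\to f(*)$, so I would simply set things up so that the scheduled successors of $w_n$ also converge, say to a chosen point $f(*)\in Y$. Weak incompressibility is then checked directly: any clopen-or-open $U$ with $\0\neq U\neq X$ either meets $W$ in a cofinite set of $w_n$'s (then some $w_n\in\closure U$ has $f(w_n)\notin U$ since the schedule leaves $U$) or meets $W$ in an infinite co-infinite set (then the schedule moves across the boundary) or misses $W$ entirely (then $U\sub Y$, but $f$ restricted near $*$ pushes into $Y$ and the orbit exits any proper open subset of $Y$ by density of the visited points). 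The one-point set case $U=\{*\}$ is handled by noting $f(\closure{\{*\}})=\{f(*)\}$ with $f(*)\in Y\neq\{*\}$, assuming we picked $f(*)\neq *$.

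The main obstacle I anticipate is verifying weak incompressibility \emph{uniformly} over all open $U$, not just clopen ones, since $Y$ can be topologically wild; the schedule-chasing argument above is the delicate part and I would likely package it by instead invoking Lemma~\ref{lem:ct}(1) and proving the equivalent statement that $(X,f)$ is chain transitive, which is more forgiving: given any open cover $\U$ of $X$ and any $a,b\in X$, I need a $\U$-chain from $a$ to $b$, and I can route such a chain through the tail $W$ and through the cofinally-visited dense points of $Y$, using that on $Y$ a $\U$-chain between any two points exists because the scheduled orbit is $\U$-dense for every $\U$ (again by the Corollary~\ref{cor:BS}-style construction of the visiting sequence). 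A secondary nuisance is bookkeeping the weight: adding a countable tail and a countable schedule cannot raise the weight above $\k$ when $\k\geq\aleph_0$, so weight $X = \k$; I would state this explicitly and dispatch the finite-$\k$ case (finite $Y$) separately with a finite cyclic orbit attached.
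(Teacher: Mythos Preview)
Your construction has a genuine gap that cannot be patched along the lines you sketch. The core problem is that $Y$ has weight $\k$, which may be uncountable, so $Y$ need \emph{not} be separable. You notice this yourself, but the proposed fix --- ``enumerate a sequence hitting every basic open set cofinally, exactly as in the proof of Corollary~\ref{cor:BS}'' --- does not work: Corollary~\ref{cor:BS} is explicitly about \emph{metrizable} systems, where there are only countably many basic open sets. When $\k > \aleph_0$ there are $\k$ many basic open sets, and no countable sequence of points in $Y$ can meet all of them. So there is no countable ``schedule'' of points to visit that is dense in $Y$, and the chain-transitivity argument in your last paragraph (routing through ``cofinally-visited dense points of $Y$'') breaks down.

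There is a second, independent problem with your convergent-sequence tail. You correctly observe that continuity at the limit point $*$ forces $f(w_n) \to f(*)$. But then for all large $n$ the image $f(w_n)$ lies in any prescribed neighborhood of $f(*)$, so the orbit through the $w_n$'s is eventually trapped near a single point of $Y$ and cannot be $\U$-dense in $Y$ for fine covers $\U$. This kills chain transitivity between points of $Y$ that are far from $f(*)$.

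The paper's construction avoids both obstacles by not attempting to weave a single orbit through $Y$ at all. It takes $X$ to be the one-point compactification of $\Z \times Y$ (with $\Z$ discrete) and defines $f(n,y) = (n+1,y)$, $f(*) = *$. Here $\{0\} \times Y$ is a clopen copy of $Y$. The system has one orbit for each $y \in Y$, and \emph{every} such orbit accumulates at $*$ (since any neighborhood of $*$ contains all but finitely many slices $\{n\} \times Y$). Chain transitivity is then immediate: to get from $(m,y)$ to $(m',y')$, run forward until you enter a $\U$-neighborhood of $*$, jump to some $(n,y')$ in that same neighborhood, and run forward again. No density in $Y$ is needed, and no countable enumeration is required; this is exactly what makes the argument go through for arbitrary weight $\k$.
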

\begin{proof}
Let $Y$ be a compact Hausdorff space of weight $\k$. Let $X$ be the one-point compactification of $\Z \times Y$, where $\Z$ is given the discrete topology. Let $*$ denote the unique point of $X - \Z \times Y$, and define $f: X \to X$ so that $f(*) = *$, and $f(n,y) = (n+1,y)$. Clearly, $f$ is continuous, $X$ has weight $\k$, and $Y$ is (homeomorphic to) a clopen subset of $X$.

It remains to show that $(X,f)$ is chain transitive. Let $\mathcal U$ be any open cover of $X$ and $a,b \in X$. To find a $\mathcal U$-chain from $a$ to $b$, fix $U \in \mathcal U$ with $* \in U$. If $a = *$ and $b = (n,y)$, we may choose $m$ small enough that $m < n$ and $(m,y) \in U$. Then
$$\<*,(m,y),(m+1,y),\dots,(n,y)\>$$
is a $\mathcal U$-chain from $a$ to $b$. Similarly if $a = (m,y)$ and $b = *$, choose $n$ large enough that $n > m$ and $(n,y) \in U$. Then
$$\<(m,y),(m+1,y),\dots,(n,y),*\>$$
is a $\mathcal U$-chain from $a$ to $b$. If $a \neq * \neq b$, then we may get a $\mathcal U$-chain from $a$ to $b$ by concatonating a $\U$-chain from $a$ to $*$ with a $\U$-chain from $*$ to $b$. Thus $(X,f)$ is chain transitive.
\end{proof}

Parovi\v{c}enko's theorem follows immediately from Theorem~\ref{thm:main} and the next result:

\begin{proposition}\label{prop:counterexample}
Suppose every weakly incompressible dynamical system of weight $\k$ is a quotient of $(\w^*,\s)$. Then every compact Hausdorff space of weight $\k$ is a continuous image of $\w^*$.
\end{proposition}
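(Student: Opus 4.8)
The plan is a short reduction. Given a nonempty compact Hausdorff space $Y$ of weight $\k$, I would first apply Lemma~\ref{lem:counterexample} to obtain a weakly incompressible dynamical system $(X,f)$ such that $X$ has weight $\k$ and a homeomorphic copy of $Y$ is clopen in $X$. By the hypothesis of the proposition, $(X,f)$ is then a quotient of $(\w^*,\s)$, so there is a continuous surjection $Q\colon \w^*\to X$; the commutation $Q\circ\s = f\circ Q$ will not be used, only that $Q$ is a continuous surjection.

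Next I would set $Z = Q^{-1}(Y)$. Because $Y$ is clopen in $X$ and $Q$ is continuous, $Z$ is a clopen subset of $\w^*$, and it is nonempty since $Q$ is onto and $Y\neq\0$. The restriction $Q\rest Z\colon Z\to Y$ is then a continuous surjection. To finish, I would use the standard fact that every nonempty clopen subset of $\w^*$ is homeomorphic to $\w^*$: such a set is of the form $A^* = \set{p\in\w^*}{A\in p}$ for some infinite $A\sub\w$, and any bijection $A\to\w$ extends through the Stone--\v{C}ech compactification to a homeomorphism $A^*\homeo\w^*$. Composing a homeomorphism $\w^*\homeo Z$ with $Q\rest Z$ then exhibits $Y$ as a continuous image of $\w^*$.

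The main (and essentially only) point needing attention is the last step, the identification of a clopen subset of $\w^*$ with $\w^*$ itself. It is worth emphasizing that this is an elementary fact about $\b\w$ and does not rely on Parovi\v{c}enko's theorem, so the argument is not circular. Everything else is a direct application of Lemma~\ref{lem:counterexample} together with the hypothesis; the dynamics of $(X,f)$ enter only to guarantee the surjection $Q$, and its dynamical structure is discarded immediately afterward.
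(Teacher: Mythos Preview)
Your proposal is correct and follows essentially the same route as the paper's proof: apply Lemma~\ref{lem:counterexample}, use the hypothesis to get a continuous surjection $Q\colon\w^*\to X$, and then restrict $Q$ to the clopen set $Q^{-1}(Y)$, which is homeomorphic to $\w^*$. Your additional remarks justifying the homeomorphism $A^*\homeo\w^*$ and noting the absence of circularity are helpful elaborations, but the underlying argument is the same.
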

\begin{proof}
Suppose every weakly incompressible dynamical system of weight $\k$ is a quotient of $(\w^*,\s)$, and let $Y$ be a compact Hausdorff space of weight $\k$. Let $(X,f)$ be the dynamical system guaranteed by Lemma~\ref{lem:counterexample}. $(X,f)$ is a quotient of $(\w^*,\s)$, so in particular there is a continuous surjection $Q: \w^* \to X$. The pre-image of $Y$ is clopen in $\w^*$, and therefore homeomorphic to $\w^*$. The restriction of $Q$ to $Q^{-1}(Y)$ provides a continuous surjection from (a copy of) $\w^*$ to $Y$.
\end{proof}

Observe that a compact Hausdorff space $X$ is connected if and only if $(X,\mathrm{id})$ is a weakly incompressible dynamical system. With this in mind, Theorem~\ref{thm:main} and the following proposition immediately imply the theorem of Dow and Hart:

\begin{proposition}
If $(X,\mathrm{id})$ is a quotient of $(\w^*,\s)$ then $X$ is a continuous image of $\mathbb H^*$.
\end{proposition}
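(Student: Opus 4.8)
The plan is to build the surjection $\mathbb{H}^* \to X$ by hand, by linearly interpolating the coordinate functions of a given quotient map and then passing to the Stone--\v{C}ech remainder. The point is that ``$(X,\mathrm{id})$'' forces those coordinate functions to vary slowly between consecutive integers, and linear interpolation is precisely the device suited to that situation.

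\emph{Setup.} Without loss of generality $X \sub [0,1]^\k$ for some cardinal $\k$, and we are given a quotient mapping $Q \colon \w^* \to X$ with $Q \circ \s = Q$. Put $q_\a = \pi_\a \circ Q \colon \w^* \to [0,1]$, so $Q = \Delta_{\a < \k} q_\a$. By the Tietze Extension Theorem each $q_\a$ is induced by some $\hat q_\a \colon \w \to [0,1]$, i.e. $q_\a(p) = \plim \hat q_\a(n)$ for all $p \in \w^*$. Using Lemma~\ref{lem:plimits}(4), the hypothesis $Q \circ \s = Q$ says exactly that $\plim \hat q_\a(n+1) = \plim \hat q_\a(n)$ for every $p \in \w^*$ and every $\a$; a routine argument (choosing $p$ concentrated on a ``bad'' set) shows this is equivalent to $\lim_{n \to \infty} \card{\hat q_\a(n+1) - \hat q_\a(n)} = 0$ for each $\a$.

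\emph{Construction and surjectivity.} For each $\a$ let $\tilde q_\a \colon \mathbb{H} \to [0,1]$ interpolate $\hat q_\a$ linearly across each $[n,n+1]$; then $\tilde q_\a$ is continuous and bounded, so it has a Stone extension $\b\tilde q_\a \colon \b\mathbb{H} \to [0,1]$. Set $\psi_\a = \b\tilde q_\a \rest \mathbb{H}^*$ and $\Psi = \Delta_{\a < \k} \psi_\a \colon \mathbb{H}^* \to [0,1]^\k$; this is the candidate. Surjectivity is the easy half: $\w = \{0,1,2,\dots\}$ is a closed discrete, hence $C^*$-embedded, subset of the locally compact space $\mathbb{H} = [0,\infty)$, so $\mathrm{cl}_{\b\mathbb{H}}(\w) \homeo \b\w$ and $\mathrm{cl}_{\b\mathbb{H}}(\w) \setminus \w$ is a copy of $\w^*$ lying inside $\mathbb{H}^*$; write $\iota \colon \w^* \to \mathbb{H}^*$ for this embedding. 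Since $\tilde q_\a \rest \w = \hat q_\a$, uniqueness of Stone extensions gives $\psi_\a \circ \iota = q_\a$, hence $\Psi \circ \iota = Q$, and therefore $X = Q(\w^*) = \Psi(\iota(\w^*)) \sub \Psi(\mathbb{H}^*)$.

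\emph{The image lands in $X$ --- the crux.} Fix $p \in \mathbb{H}^*$ and a net $\seq{t_i}{i \in I}$ in $\mathbb{H}$ with $t_i \to p$ in $\b\mathbb{H}$; since $p$ avoids every compact subset of $\mathbb{H}$, we have $t_i \to \infty$, hence $n_i := \lfloor t_i \rfloor \to \infty$. Passing to a subnet, we may assume $n_i \to \bar p$ in $\b\w$; then $\bar p \in \w^*$ and $\seq{(\hat q_\a(n_i))_{\a < \k}}{i} \to (q_\a(\bar p))_{\a < \k} = Q(\bar p) \in X$. On the other hand interpolation gives the uniform bound $\card{\tilde q_\a(t) - \hat q_\a(\lfloor t \rfloor)} \le \card{\hat q_\a(\lfloor t \rfloor + 1) - \hat q_\a(\lfloor t \rfloor)}$ for all $t \ge 0$, whose right side tends to $0$ along $\seq{n_i}{i}$ by the slowly-varying condition; hence $\psi_\a(p) = \lim_i \tilde q_\a(t_i) = \lim_i \hat q_\a(n_i) = \pi_\a(Q(\bar p))$ for every $\a$, so $\Psi(p) = Q(\bar p) \in X$. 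Together with the previous paragraph this yields $\Psi(\mathbb{H}^*) = X$, as desired. The only place the hypothesis $f = \mathrm{id}$ is used is the slowly-varying condition; everything else is Stone--\v{C}ech bookkeeping, and the main thing to get right is the net argument of this last step, where ``closeness'' in the product $[0,1]^\k$ must be checked one coordinate at a time.
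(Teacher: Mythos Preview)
Your proof is correct and follows the same construction as the paper's: extend the quotient map $Q$ to a sequence in the cube via Tietze, linearly interpolate to get a map on $\mathbb{H}$, and pass to the remainder. The only real difference is in the verification that $\Psi(\mathbb{H}^*)\sub X$. The paper invokes the second half of Lemma~\ref{lem:mainlemma} to conclude that the sequence $\seq{x_n}{n<\w}$ is eventually compliant with every nice open cover, then uses a double star refinement together with the convexity of basic open boxes to trap each interpolated segment inside a prescribed neighborhood of $X$. You instead work one coordinate at a time, deriving the slowly-varying condition $\lim_n\card{\hat q_\a(n+1)-\hat q_\a(n)}=0$ directly from $Q\circ\s=Q$ and finishing with a net/subnet argument in $\b\mathbb{H}$ and $\b\w$. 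Your route is a bit more self-contained (it avoids the eventual-compliance machinery and the star-refinement bookkeeping), while the paper's route reuses the framework already built for the main theorem; substantively the two arguments are doing the same thing, since a basic open set in $[0,1]^\k$ depends on only finitely many coordinates.
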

\begin{proof}
Suppose $(X,\mathrm{id})$ is a quotient of $(\w^*,\s)$, and assume that $X \sub [0,1]^\dlt$ for some $\dlt$. By Theorem~\ref{thm:main} and the second part of Lemma~\ref{lem:mainlemma}, there is a sequence $\seq{x_n}{n < \w}$ of points in $[0,1]^\dlt$ that is eventually compliant with every nice open cover of $X$.

Define a map $q: \mathbb H \to [0,1]^\dlt$ by sending $n$ to $x_n$ for each integer $n$, and then extending $q$ linearly to the rest of $\mathbb H$. This function induces a map $Q: \mathbb H^* \to [0,1]^\dlt$, and we claim that $Q$ is a continuous surjection from $\mathbb H^*$ to $X$.

$Q$ is continuous by definition. We see that $Q(\mathbb H^*) \supseteq X$ by considering those elements of $\mathbb H^*$ that are supported on the integers. It remains to show $Q(\mathbb H^*) \sub X$. Let $W$ be an open set containing $X$ and let $\U$ be a nice open cover with $\bigcup \U \sub W$. Let $\V$ be a star refinement of a star refinement of $\U$. Because $\seq{x_n}{n < \w}$ is eventually compliant with $\V$, there is some $m$ such that for all $n \geq m$, $x_{n+1} \in \V_\star(\V_\star(x_n))$. By our choice of $\V$, there is some $U \in \U$ with $x_n,x_{n+1} \in U$. As every basic open subset of $[0,1]^\dlt$ is convex, $q(r) \in U$ for all $r \in [x_n,x_{n+1}]$. Thus $q(r) \in W$ for every $r \in [m,\infty)$, which implies $Q(\mathbb H^*) \sub W$. Since $W$ was arbitrary, $Q(\mathbb H^*) \sub X$.
\end{proof}



\subsection*{The first and fourth heads of $\b\w$}

If we assume the Continuum Hypothesis, then Theorem~\ref{thm:main} gives a complete internal characterization of the quotients of $(\w^*,\s)$:

\begin{theorem}\label{thm:ch}
Assuming $\ch$, the following are equivalent:
\begin{enumerate}
\item $(X,f)$ is a quotient of $(\w^*,\s)$.
\item $X$ has weight at most $\continuum$ and $f$ is weakly incompressible.
\item $X$ is a continuous image of $\w^*$ and $f$ is weakly incompressible.
\end{enumerate}
\end{theorem}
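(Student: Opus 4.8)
The plan is to derive Theorem~\ref{thm:ch} from the Main Theorem (Theorem~\ref{thm:main}) together with Parovi\v{c}enko's theorem, using $\ch$ only to collapse the weight bound $\aleph_1$ and the continuum $\continuum$ into one another. The logical skeleton is a cycle of implications $(1) \Rightarrow (3) \Rightarrow (2) \Rightarrow (1)$, and none of the three steps should require any genuinely new idea beyond what is already in the excerpt.

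First I would prove $(1) \Rightarrow (3)$. If $(X,f)$ is a quotient of $(\w^*,\s)$, then in particular there is a continuous surjection $Q \colon \w^* \to X$, so $X$ is a continuous image of $\w^*$; and by Lemma~\ref{lem:ct}(2) every quotient of $(\w^*,\s)$ is weakly incompressible. This step needs no hypothesis at all. Next, $(3) \Rightarrow (2)$ is the place where $\ch$ enters: a continuous image of $\w^*$ is a compact Hausdorff space of weight at most $|\w^*| \leq 2^{\continuum}$, but in fact the standard bound is weight at most $\continuum$ (the space $\w^*$ has a base of size $\continuum$, being a quotient of $\b\w$, whose weight is $\continuum$; more carefully, any compact image of $\w^*$ has weight at most $\continuum$ since $C(\w^*)$ has density $\continuum$). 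So $(3)$ directly gives that $X$ has weight at most $\continuum$, and weak incompressibility is carried over verbatim. Again this does not even use $\ch$, only the elementary fact about the weight of continuous images of $\w^*$.

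The substantive step is $(2) \Rightarrow (1)$, and this is where $\ch$ does its work. Assuming $\ch$, we have $\continuum = \aleph_1$, so a space $X$ of weight at most $\continuum$ has weight at most $\aleph_1$. If $f$ is weakly incompressible, Theorem~\ref{thm:main} applies directly (the case of weight $\aleph_0$ is either subsumed by re-embedding into weight exactly $\aleph_1$, or handled by the Bowen--Sharkovsky theorem, Theorem~\ref{thm:BS}) and yields that $(X,f)$ is a quotient of $(\w^*,\s)$. I would be slightly careful about the degenerate possibility that $X$ is finite or that the weight is countable, but in each such case the conclusion follows from Theorem~\ref{thm:BS} applied to the metrizable system $(X,f)$, so there is no real gap.

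I do not anticipate a serious obstacle here: the theorem is essentially a bookkeeping corollary, and the only point requiring a moment's care is the weight estimate for continuous images of $\w^*$ in the step $(3) \Rightarrow (2)$, which is classical (e.g., a continuous surjection $\w^* \to X$ induces an embedding $C(X) \hookrightarrow C(\w^*)$, and $C(\w^*)$ has density character $\continuum$, whence $X$ has weight at most $\continuum$). Everything else is a direct invocation of results already proved in the excerpt. One could also phrase $(3) \Rightarrow (2)$ by invoking that under $\ch$ all continuous images of $\w^*$ have weight $\leq \aleph_1 = \continuum$, but the $\zfc$ bound is cleaner and makes clear that $\ch$ is used only in $(2) \Rightarrow (1)$.
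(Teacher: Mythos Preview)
Your proof is correct and essentially matches the paper's: both treat the theorem as a bookkeeping corollary of Theorem~\ref{thm:main}, Lemma~\ref{lem:ct}(2), and the elementary weight bound for continuous images of $\w^*$ (the paper organizes it as $(1)\Leftrightarrow(2)$ via the Main Theorem plus $\ch$, and $(2)\Leftrightarrow(3)$ via Parovi\v{c}enko's $\ch$ characterization, rather than your cycle, but the ingredients are identical). Your observation that $\ch$ is needed only for the step $(2)\Rightarrow(1)$ is a mild sharpening of the paper's presentation.
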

\begin{proof}
The equivalence of $(1)$ and $(2)$ is a straightforward consequence of Theorem~\ref{thm:main} and $\ch$. The equivalence of $(2)$ and $(3)$ is a straightforward consequence of Parovi\v{c}enko's characterization of the continuous images of $\w^*$ under $\ch$.
\end{proof}

Of the six implications this theorem entails, three are provable from $\zfc$: $(1) \Rightarrow (2)$, $(1) \Rightarrow (3)$, and $(3) \Rightarrow (2)$. We will now consider the other three, and show that each of them is independent of $\zfc$.

Lemma~\ref{lem:counterexample} shows that $(2) \Rightarrow (3)$ if and only if every compact Hausdorff space of weight $\leq \continuum$ is a continuous image of $\w^*$. This is a purely topological question about $\w^*$ that is considered elsewhere, e.g. in \cite{JvM}. It is known to be independent: for example, a result of Kunen states that $\w_2+1$ is not a continuous image of $\w^*$ in the Cohen model.

Because $(1) \Rightarrow (3)$ is a theorem of $\zfc$, the previous paragraph also shows that $(2) \Rightarrow (1)$ is independent.

The independence of $(3) \Rightarrow (1)$ requires a different argument. Consider the following corollary to Theorem~\ref{thm:ch}:

\begin{corollary}\label{cor:quotient}
Assuming $\ch$, $(\w^*,\s^{-1})$ is a quotient of $(\w^*,\s)$.
\end{corollary}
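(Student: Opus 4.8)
The plan is to deduce this as a direct application of Theorem~\ref{thm:ch}. Since we are assuming $\ch$, the continuum is $\aleph_1$, so by the equivalence of $(1)$ and $(3)$ in Theorem~\ref{thm:ch} it suffices to verify two things about the dynamical system $(\w^*,\s^{-1})$: first, that $\w^*$ is a continuous image of $\w^*$, and second, that $\s^{-1}$ is weakly incompressible. The first is trivial, since the identity map on $\w^*$ is a continuous surjection. So the entire content of the corollary reduces to checking weak incompressibility of $\s^{-1}$.

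For the weak incompressibility of $\s^{-1}$, I would argue as follows. By Lemma~\ref{lem:ct}(2), every quotient of $(\w^*,\s)$ is weakly incompressible; in particular, taking the quotient map to be the identity, $(\w^*,\s)$ itself is weakly incompressible. Now observe that weak incompressibility is symmetric under passing to the inverse of an autohomeomorphism: if $f$ is an autohomeomorphism of a compact Hausdorff space $X$ and $U \subseteq X$ is open with $\emptyset \neq U \neq X$, then $f^{-1}(\closure{U}) \subseteq U$ would be an obstruction for $f^{-1}$, but complementing and using that $f$ is a homeomorphism, $f(\closure{X \setminus \closure{U}}) \subseteq f(X \setminus U) = X \setminus f(U) \subseteq X \setminus U \subseteq X \setminus \closure{\Int{X}{U}}$ — more cleanly, $f^{-1}(\closure U)\subseteq U$ implies $\closure U \subseteq f(U)$, hence $X\setminus f(U)\subseteq X\setminus\closure U$, so $f(X\setminus U)\subseteq X\setminus\closure U\subseteq \Int{X}{X\setminus U}$, and then $f(\closure{\Int{X}{X\setminus U}})\subseteq \closure{f(X\setminus U)}\subseteq \closure{\Int{X}{X\setminus U}}\subseteq X\setminus U$... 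One must be slightly careful with closures here, so the cleanest route is to use Lemma~\ref{lem:ct}(1): weak incompressibility is equivalent to chain transitivity, and chain transitivity is manifestly symmetric under $f \mapsto f^{-1}$ (a $\U$-chain from $a$ to $b$ for $f$, read backwards, is almost a $\U$-chain from $b$ to $a$ for $f^{-1}$, after a harmless star-refinement of the cover). Since $(\w^*,\s)$ is chain transitive, so is $(\w^*,\s^{-1})$, hence $\s^{-1}$ is weakly incompressible.

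With both hypotheses of $(3)$ in Theorem~\ref{thm:ch} verified for $(\w^*,\s^{-1})$, the implication $(3)\Rightarrow(1)$ gives that $(\w^*,\s^{-1})$ is a quotient of $(\w^*,\s)$, which is exactly the statement of the corollary. I expect the only point requiring any care to be the lemma that chain transitivity of an autohomeomorphism is inherited by its inverse — specifically getting the $\U$-chain bookkeeping right when reversing a chain — but this is routine and could even be cited from Section 5 of \cite{WRB} if it appears there. Everything else is an immediate appeal to Theorem~\ref{thm:ch} together with $\ch$ forcing $\continuum = \aleph_1$.
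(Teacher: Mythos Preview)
Your proposal is correct and takes essentially the same approach as the paper: invoke Theorem~\ref{thm:ch}, and reduce to the observation that weak incompressibility (equivalently chain transitivity) of an autohomeomorphism passes to its inverse. The paper's proof is terser, asserting directly that a $\U$-chain from $a$ to $b$ for $f$ is a $\U$-chain from $b$ to $a$ for $f^{-1}$; your caution about the chain-reversal bookkeeping is warranted (the reversed chain is literally an $f^{-1}(\U)$-chain rather than a $\U$-chain), but since $f$ is a homeomorphism this distinction is harmless for the conclusion.
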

\begin{proof}
The proof is immediate from Theorem~\ref{thm:ch} and the following observation: \emph{If $X$ is a compact Hausdorff space and $f: X \to X$ is a homeomorphism, then $f$ is weakly incompressible if and only if $f^{-1}$ is.}

This is easy to see using chain transitivity: given an open cover $\U$ of $X$ and any $a,b \in X$, $(X,f)$ has a $\U$-chain from $a$ to $b$ if and only if $(X,f^{-1})$ has a $\U$-chain from $b$ to $a$.
\end{proof}

To show that $(3) \Rightarrow (1)$ is independent, it is enough to prove that the conclusion of Corollary~\ref{cor:quotient} is independent.

\begin{theorem}\label{thm:farah}
Assuming $\ocama$, $(\w^*,\s^{-1})$ is not a quotient of $(\w^*,\s)$.
\end{theorem}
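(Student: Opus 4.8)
The plan is to suppose, toward a contradiction, that $Q\colon\w^*\to\w^*$ is a quotient mapping from $(\w^*,\s)$ to $(\w^*,\s^{-1})$ --- that is, a continuous surjection with $Q\circ\s=\s^{-1}\circ Q$ --- and to show that $\ocama$ forces $Q$ into a form incompatible with this conjugacy. First I would dualize: since $\w^*$ is the Stone space of $\pwmf$, the surjection $Q$ corresponds to an injective unital Boolean homomorphism $\Phi\colon\pwmf\to\pwmf$ (injectivity coming from surjectivity of $Q$), and the conjugacy identity becomes $\Sigma\circ\Phi=\Phi\circ\Sigma^{-1}$, where $\Sigma$ is the automorphism of $\pwmf$ dual to $\s$ (the one induced by $n\mapsto n+1$).

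The crucial ingredient is an $\ocama$ rigidity theorem: under $\ocama$, every homomorphism $\pwmf\to\pwmf$ is \emph{trivial} --- induced by a finite-to-one function $f$ with cofinite domain --- so that, equivalently, every continuous self-map of $\w^*$ has the form $\b f\rest\w^*$. This is the $\ocama$ counterpart of the Shelah--Stepr\=ans rigidity of automorphisms of $\pwmf$ (Veli\v{c}kovi\'c for the $\ocama$ version, with the passage from automorphisms to arbitrary --- here injective --- homomorphisms due to Farah); one first extracts a Baire-measurable, hence comeagerly continuous, lifting and then distils an honest function $\w\to\w$ from it, using that $\mathrm{fin}$ is the simplest possible ideal. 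In particular $Q=\b f\rest\w^*$ for some finite-to-one $f$ defined off a finite set.

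The rest is a short computation. Since $\s$ is the Stone extension of the successor function and $\s^{-1}$ that of the predecessor function, and since Stone extensions satisfy $\b{(g_1\circ g_2)}=\b g_1\circ\b g_2$ and depend on the underlying function only modulo finite, the identity $Q\circ\s=\s^{-1}\circ Q$ translates into $f(n+1)=f(n)-1$ for all but finitely many $n$. Choosing $N$ beyond which this holds gives $f(N+k)=f(N)-k$ for every $k\in\w$, so $f(N+k)<0$ once $k>f(N)$ --- impossible, since $f$ takes values in $\w$. This contradiction shows that $(\w^*,\s^{-1})$ is not a quotient of $(\w^*,\s)$.

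The hard part is the rigidity input, at exactly this level of generality. The familiar statements of the Shelah--Stepr\=ans and Veli\v{c}kovi\'c theorems concern \emph{auto}homeomorphisms of $\w^*$, whereas our $Q$ need not be injective: ``folding'' maps, which collapse a clopen partition $\w^*=A^*\sqcup B^*$ onto $\w^*$, are $2$-to-$1$ continuous surjections that are perfectly legitimate here. So what is genuinely needed is triviality of \emph{all} homomorphisms of $\pwmf$ (equivalently, of all embeddings of $\pwmf$ into itself, since $\Phi$ is one-to-one). Securing a precise reference for this, or reproving the required case by an $\oca$-dichotomy argument together with $\ma$, is where essentially all the difficulty lies; the dynamical half of the argument, once triviality is in hand, is a one-line calculation. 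A reduction to the injective case would bypass this, but it looks hard to come by, since $\s$ fixes no proper clopen subset of $\w^*$, so I would plan on invoking the homomorphism version directly.
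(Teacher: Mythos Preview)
Your plan has a genuine gap at the rigidity input. The statement ``under $\ocama$, every (injective) homomorphism $\pwmf\to\pwmf$ is induced by a finite-to-one $f$ with cofinite domain'' is false, even for injective homomorphisms. Here is a $\zfc$ counterexample: split $\w=A\sqcup B$ into two infinite pieces, fix a bijection $h\colon A\to\w$ and any $p\in\w^*$, and define $F\colon\w^*\to\w^*$ by $F\rest A^*=\b h\rest A^*$ and $F\rest B^*\equiv p$. This $F$ is continuous (piecewise on clopen sets) and surjective (since $\b h(A^*)=\w^*$), so its dual $\Phi$ is an injective embedding of $\pwmf$ into itself. But $F$ is not induced by any $g\colon\w\to\w$: that would require $\b g$ to be constantly $p$ on $B^*$, i.e., the sequence $\langle g(b):b\in B\rangle$ to converge to $p$ in $\b\w$, and no sequence of integers converges to a free ultrafilter. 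So the theorem you want to quote does not exist at the stated generality, and this is not merely a reference-hunting problem.

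What the paper actually invokes is the weaker Farah--Veli\v{c}kovi\'c result: under $\ocama$, for any continuous $F\colon\w^*\to\w^*$ there is $A\sub\w$ with $F\rest A^*$ trivial and $F(\w^*\setminus A^*)$ nowhere dense. The paper then \emph{uses the conjugacy} $Q\circ\s=\s^{-1}\circ Q$ to upgrade this to global triviality: if $A$ were not cofinite, the set $B=\{a\in A:a+1\notin A\}$ would be infinite, and then $\s^{-1}\circ Q(B^*)=(q(B)-1)^*$ is clopen while $Q\circ\s(B^*)\sub Q(\w^*\setminus A^*)$ is nowhere dense, contradicting the conjugacy. This bootstrapping step is exactly what your outline is missing. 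Once $A$ is cofinite and $q$ is finite-to-one into $\w$, your endgame ``$f(n+1)=f(n)-1$ eventually, hence $f$ goes negative'' is correct and is indeed cleaner than the paper's explicit construction of a bad set $B$; but you cannot reach that endgame without first doing the cofiniteness argument.
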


Recall that a continuous function $F: \w^* \to \w^*$ is \emph{trivial} if there is a function $f: \w \to \b\w$ such that $F = \b f \rest \w^*$. Similarly, $F: A^* \to \w^*$ is trivial if it is induced by a function $A \to \b\w$. To prove Theorem~\ref{thm:farah}, we will use a deep theorem greatly restricting the kinds of self-maps of $\w^*$ we find under $\ocama$. A very general version of the result is proved by Farah in \cite{IF1}, but we need only a special case, which is already implicit in the work of Velickovic \cite{Vel}, and has precursors in the work of Shelah-Stepr\={a}ns \cite{S&S} and Shelah \cite{SSh}.

\begin{theorem}[Farah, et al.]\label{thm:ocama}
Assuming $\ocama$, for any continuous $F: \w^* \to \w^*$ there is some $A \sub \w$ such that $F \rest A^*$ is trivial and $F(\w^* - A^*)$ is nowhere dense.
\end{theorem}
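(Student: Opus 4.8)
The plan is to reproduce in outline the standard route to rigidity theorems of this kind (Velickovic \cite{Vel}, Farah \cite{IF1}): translate $F$ into a homomorphism of $\pwmf$, use $\oca$ to force that homomorphism to have a continuous representative off a small piece, and use $\ma$ to glue the resulting local data into a single ground function. First I would pass from topology to algebra. The clopen subsets of $\w^*$ are exactly the sets $A^*$ with $A \sub \w$, and $A^* = B^*$ iff $A =^* B$, so clopen sets are in bijection with $\pwmf$. Since $F$ is continuous, $F^{-1}(B^*)$ is clopen for every $B$, and $[B] \mapsto [F^{-1}(B^*)]$ defines a Boolean homomorphism $\Phi \colon \pwmf \to \pwmf$, the \emph{preimage homomorphism} of $F$. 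For a point $p \in \w^*$ one has $B \in F(p)$ exactly when $p \in \Phi([B])^*$, so $\Phi$ determines $F$ completely. In this language both conclusions become purely combinatorial: ``$F \rest A^*$ is trivial'' says that $\Phi$ restricted to $\mathcal P(A)/\mathrm{fin}$ is induced by a ground function $f \colon A \to \w$, i.e.\ $\Phi([B]) \cap A =^* f^{-1}(B)$ for all $B$; and ``$F(\w^* - A^*)$ is nowhere dense'' says that for every infinite $B \sub \w$ there is an infinite $B' \sub B$ with $\Phi([B']) \sub^* A$.

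Next I would fix a set-level lifting $\Phi_* \colon \mathcal P(\w) \to \mathcal P(\w)$ of $\Phi$ and aim to show that $\Phi_*$ coincides, on a co-small family of sets, with a continuous map $2^\w \to 2^\w$. This is where $\oca$ enters, and it is the heart of the argument. One defines a partition $[Z]^2 = K_0 \cup K_1$ of the pairs from a suitable separable metric space $Z$ built from finite approximations to $\Phi_*$, engineered so that $K_0$ is open and codes local discontinuity, i.e.\ witnesses that $\Phi_*$ is not given by a single function near the pair. $\oca$ then yields the dichotomy: either there is an uncountable $K_0$-homogeneous set, or $Z = \bigcup_{n} Z_n$ with each $[Z_n]^2 \sub K_1$. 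The first alternative is refuted by a counting argument, since an uncountable $K_0$-homogeneous set would manufacture uncountably many pairwise incompatible local behaviours, contradicting the fact that $F$ is a single continuous map into $\w^*$. The second alternative gives countably many pieces on each of which $\Phi_*$ is continuous, hence Borel, and a Borel homomorphism of $\pwmf$ is induced by a ground function on that piece.

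I would then use Martin's Axiom to assemble the countably many local functions $f_n$ into one. The $f_n$ agree only modulo finite and must be reconciled against the $\aleph_1$ constraints recorded by $\Phi$, so I would force with the $\s$-centered poset of finite partial functions approximating a single $f \colon A \to \w$ that realizes every $f_n$ on a cofinite-in-$Z_n$ domain; $\ma$ (indeed $\pseudo > \aleph_1$ suffices) supplies a filter meeting the relevant $\aleph_1$ dense sets, yielding $A$ and $f$ with $\Phi([B]) \cap A =^* f^{-1}(B)$, that is, $F \rest A^*$ trivial. For the complement I would verify the nowhere-density criterion directly: because $A$ absorbs exactly the sets on which $\Phi$ is realized by $f$, for any infinite $B$ the ``non-trivial'' part of $\Phi([B])$ can be trimmed to an infinite $B' \sub B$ with $\Phi([B']) \sub^* A$, so $F^{-1}((B')^*) \sub A^*$ and $(B')^*$ misses $F(\w^* - A^*)$. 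Thus the (closed) set $F(\w^* - A^*)$ contains no nonempty clopen set and is therefore nowhere dense.

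The main obstacle is the $\oca$ step: designing the coloring so that it is simultaneously open in a genuine separable metric topology, so that the color-$0$ horn is refutable by a counting invariant calibrated to a map that is merely continuous rather than an autohomeomorphism, and so that the color-$1$ horn actually delivers continuity of the lifting. Allowing $\Phi$ to be non-injective is precisely what prevents one from concluding $A =^* \w$ and instead leaves room for the nowhere-dense residue, so the coloring must tolerate this collapse while still detecting genuine discontinuity; this is the delicate point carried out in \cite{Vel} and \cite{IF1}.
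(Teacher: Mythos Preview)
The paper does not prove this theorem at all: it is quoted as a known result, with attribution to Farah \cite{IF1} (with precursors in Velickovic \cite{Vel}, Shelah--Stepr\={a}ns \cite{S&S}, and Shelah \cite{SSh}), and is then used as a black box in the proof of Theorem~\ref{thm:farah}. So there is no proof in the paper to compare your proposal against.

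That said, your outline is a faithful high-level sketch of the standard argument in the cited references: pass to the dual Boolean homomorphism, use an $\oca$ dichotomy on a coloring that detects local discontinuity of a set-level lifting, refute the uncountable-homogeneous horn, and on the $\s$-decomposition horn use $\ma$ (or $\pseudo>\aleph_1$) to uniformize the local ground functions into a single $f$ on some $A$, then verify the nowhere-density of the residual image. You are also candid that the hard content lies in the design of the coloring and the refutation of the first horn, and that these details are deferred to \cite{Vel} and \cite{IF1}. As a reading of the literature your plan is sound; just be aware that in the context of this paper the appropriate ``proof'' is simply a citation.
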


\begin{proof}[Proof of Theorem~\ref{thm:farah}]
Suppose $Q$ is a quotient mapping from $(\w^*,\s)$ to $(\w^*,\s^{-1})$. Using Theorem~\ref{thm:ocama}, fix $A \sub \w$ such that $Q \rest A^*$ is trivial and $Q(\w^* - A^*)$ is nowhere dense. Also, fix $q: A \to \b\w$ such that $Q \rest A^* = \b q \rest A^*$.

Because $Q$ is surjective, $A$ must be infinite.

Let $X = \set{a \in A}{q(a) \in \w}$. Observe that $Q \rest X$ remains trivial and that $Q(\w^* - X^*)$ remains nowhere dense. Thus, replacing $A$ with $X$ if necessary, we may (and do) assume that $q(a) \in \w$ for all $a \in A$.

If $q$ is not finite-to-one on $A$, there is an infinite set $X \sub A$ and some $n \in \w$ with $q(X) = n$, but then $Q(p) = n$ for any $p \in X^*$, a contradiction. Thus $q$ is finite-to-one on $A$.

Suppose $A$ is not co-finite. Then
$$B = \set{a \in A}{a+1 \notin A}$$
is infinite. Using basic facts about Stone extensions, $\s^{-1} \circ Q(B^*) = (q(B)-1)^*$. This set is clopen (in particular, it has nonempty interior), so we may find some $p \in B^*$ such that $\s^{-1} \circ Q (p) \notin Q(\w^* - A^*)$. However, $Q \circ \s(p) \in Q \circ \s(B^*) = Q((B+1)^*) \sub Q(\w^* - A^*)$, so that $\s^{-1} \circ Q(p) \neq Q \circ \s(p)$, a contradiction. Thus $A$ is co-finite, and $Q = \b q \rest \w^*$ for some finite-to-one function $q: A \to \w$. Since changing $q$ on a finite set does not change $Q = \b q \rest \w^*$, we may assume $A = \w$, and $Q$ is induced by a finite-to-one function $q: \w \to \w$.

We now construct an infinite sequence of natural numbers as follows. Pick $b_0 \in \w$ arbitrarily. Assuming $b_0,b_1,\dots,b_n$ are given, there are co-finitely many $b \in \w$ satisfying
\begin{enumerate}
\item $b \neq b_0,b_1,\dots,b_n$,
\item $q(b)-1 \neq q(b_0+1),q(b_1+1),\dots,q(b_n+1)$, and
\item $q(b+1) \neq q(b_0)-1,q(b_1)-1,\dots,q(b_n)-1$.
\end{enumerate}
Also, a straightforward argument by contradiction shows that there are infinitely many $b \in \w$ satisfying
\begin{enumerate}
\setcounter{enumi}{3}
\item $q(b_{n+1})-1 \neq q(b_{n+1}+1)$.
\end{enumerate}
Thus we may choose some $b_{n+1} \in \w$ satisfying $(1) - (4)$.

Let $B = \set{b_n}{n < \w}$ and let $p \in B^*$. Then $Q \circ \s(p) \in q(B+1)^*$ and $\s^{-1} \circ Q(p) \in (q(B) - 1)^*$. By construction, $q(B+1) \cap (q(B) - 1) = \0$, which shows $Q \circ \s(p) \neq \s^{-1} \circ Q(p)$.
\end{proof}

We do not know whether Corollary~\ref{cor:quotient} can be improved from a quotient mapping to an isomorphism:

\begin{question}
Is it consistent that there is a homeomorphism $H: \w^* \to \w^*$ with $H \circ \s = \s^{-1} \circ H$?
\end{question}

Observe that our proof of Theorem~\ref{thm:main} cannot produce a homeomorphism: in the quotient mapping constructed there, the inverse image of $\vec{0}$ has nonempty interior. Therefore some new idea would be needed to answer this question in the affirmative. We point out that if the answer to this question is yes, then it seems likely that $\ch$ will imply the existence of such an isomorphism already (see Section 5.1 of \cite{IF2}). See \cite{SG2} for some partial results. 

\subsection*{An extension using Martin's Axiom}

We end with an extension of Theorem~\ref{thm:main} to cardinals $\k < \pseudo$.

\begin{theorem}\label{thm:MA}
Let $(X,f)$ be a dynamical system with the weight of $X$ less than $\pseudo$. Then $(X,f)$ is a quotient of $(\w^*,\s)$ if and only if $f$ is weakly incompressible.
\end{theorem}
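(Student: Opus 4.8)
The forward implication is Lemma~\ref{lem:ct}(2), exactly as in Theorem~\ref{thm:main}, so the content is to show that a weakly incompressible dynamical system $(X,f)$ whose weight is $\k < \pseudo$ is a quotient of $(\w^*,\s)$. The plan is to rerun the proof of Theorem~\ref{thm:main} with the length-$\w_1$ recursion replaced by one of length $\k$. Embed $X$ in $[0,1]^\k$ with $\vec 0 \in X$, fix a sufficiently large regular cardinal, let $H$ be the collection of sets hereditarily smaller than it, and fix a sequence $\seq{M_\a}{\a < \k}$ of countable elementary submodels of $H$ with $X,f \in M_0$, with $M_\a = \bigcup_{\b<\a} M_\b$ whenever $\cf(\a) = \w$, with $\seq{M_\b}{\b<\a} \in M_{\a+1}$, and with $\bigcup_{\a<\k} M_\a \supseteq \k$ (so every coordinate of $[0,1]^\k$, hence every nice open cover of $X$, eventually enters the chain). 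For each $\a$ put $E_\a = \k \cap M_\a$, a \emph{countable} set of coordinates, and let $(X_{E_\a},f_{E_\a})$ be the corresponding reflection, with $X_{E_\a} = \Pi_{E_\a}(X)$; as in the proof of Lemma~\ref{lem:projections}, elementarity of $M_\a$ guarantees that the induced map $f_{E_\a}$ is well defined, continuous, and again weakly incompressible. Because each $E_\a$ is countable, each $X_{E_\a}$ is metrizable, so Corollary~\ref{cor:BS} applies to $(X_{E_\a},f_{E_\a})$ inside $M_\a$; the coordinate functions $q_\b \colon \w \to [0,1]$ are then produced, and the inductive hypotheses (H1)--(H3) maintained at successor stages, by exactly the reflection-and-copying procedure of Theorem~\ref{thm:main}, run along a single $\w$-indexed sequence of loop boundaries.

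Two features of the original argument used that the recursion had length only $\w_1$: at successor stages the (countably many) nice open covers of $X$ in $M_{\a+1}$ could be listed in order type $\w$ as a refining sequence, and the chain $\seq{M_\a}{\a<\k}$ was continuous, so (H1)--(H3) passed to limits for free. The first feature survives, since each $M_{\a+1}$ is still countable. The second does not: a $\sub$-increasing chain of countable models has length at most $\w_1$, so at a limit ordinal $\a < \k$ of uncountable cofinality the chain must ``jump'' — $M_\a$ cannot contain $\bigcup_{\b<\a}M_\b$ — and (H3), which refers to all nice open covers in the current model, must be re-established after the jump. This reconciliation is where $\k < \pseudo$ enters, and it is the crux of the argument.

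At such a stage we have already built $q_\b$ for all $\b$ in the (possibly uncountable) set $\bigcup_{\g<\a}E_\g$, together with, for each $\g<\a$, a witness to (H3) for the covers in $M_\g$. The key observation is that passing to a \emph{subsequence} of loop boundaries only coarsens the $\vec 0$-loop decomposition — consecutive $\vec 0$-loops concatenate to a $\vec 0$-loop — so it never destroys compliance already achieved; what must be arranged is one infinite subsequence of loop boundaries that simultaneously re-certifies (H3) for every nice open cover of $X$ occurring in $\bigcup_{\g<\a}M_\g$. There are fewer than $\k < \pseudo$ such covers, and the sets of admissible loop-boundary subsequences for them form a filter base with the finite intersection property; by the defining property of $\pseudo$ these admit a common infinite pseudo-intersection, which we install as the loop-boundary sequence from stage $\a$ onward. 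The delicate bookkeeping — coherently revising the loop boundaries at every uncountable-cofinality limit without invalidating previously fixed coordinate functions, and checking that the reflection-and-copying step at later successor stages still works relative to the revised boundaries — is the technical heart; everything else is a faithful transcription of the proof of Theorem~\ref{thm:main}. When the recursion terminates, $Q = \Delta_{\a<\k}q_\a$ induces, via Lemma~\ref{lem:mainlemma}, the desired quotient of $(\w^*,\s)$ onto $(X,f)$, since every nice open cover of $X$ uses only finitely many coordinates and hence was handled at some stage $\a < \k$. This is precisely the dynamical analogue of the way $\pseudo$ enters the theorem of van Douwen and Przymusi\'nski.
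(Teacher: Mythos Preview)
The paper's proof is entirely different from your sketch, and considerably shorter. Rather than rerunning the elementary-submodel recursion of Theorem~\ref{thm:main}, the paper invokes Bell's theorem (that $\k<\pseudo$ is equivalent to $\mask$) and gives a one-shot forcing argument. Fix a countable dense $D\sub[0,1]^\k$ disjoint from $X$ and a point $x\in X$. The poset $\p$ consists of pairs $\<s,\U\>$ where $s$ is a finite sequence of distinct points of $D$ and $\U$ is a nice open cover of $X$, ordered so that $\<t,\V\>\leq\<s,\U\>$ when $s$ is an initial segment of $t$, $\V$ refines $\U$, and $t\setminus s$ is either empty or a $\U$-compliant $x$-loop. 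Since $D$ is countable and any two conditions with the same first coordinate are compatible, $\p$ is $\s$-centered; for each nice open cover $\U$ the set $D_\U=\set{\<s,\V\>}{\V\text{ refines }\U}$ is dense; and there are only $\k$ such covers. A filter meeting every $D_\U$ yields a sequence eventually compliant with every nice open cover, and Lemma~\ref{lem:mainlemma} finishes. No submodels, no transfinite recursion, no limit stages.

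Your outline has the right instinct about where $\pseudo$ should enter --- thinning a family of fewer than $\pseudo$ cofinite sets of loop-boundary indices to a common pseudo-intersection --- but the surrounding scaffolding does not stand up as written. A $\sub$-increasing chain of \emph{countable} elementary submodels has length at most $\w_1$, so the conditions you impose on $\seq{M_\a}{\a<\k}$ are not simultaneously satisfiable: at $\a$ of uncountable cofinality either $M_\a$ is uncountable (and the reflection $(X_{E_\a},f_{E_\a})$ is no longer metrizable, so Corollary~\ref{cor:BS} fails to apply) or $M_\a$ does not contain $\bigcup_{\b<\a}M_\b$, in which case the successor step loses its target for reflection and hypothesis (H1) is in jeopardy because $Q_\a$ now depends on uncountably many previously defined $q_\b$ that the countable $M_{\a+1}$ cannot individually see. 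One might try to repair this by reflecting into the uncountable union $\bigcup_{\b<\a}M_\b$ (still an elementary submodel) and carrying the whole sequence $\seq{M_\b}{\b<\a}$ as a single parameter in $M_{\a+1}$, but you have not carried this out, and the paper's $\s$-centered poset shows that none of it is needed.
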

\begin{proof}
Let $(X,f)$ be a weakly incompressible dynamical system, and let $\k$ be the weight of $X$. Suppose $\k < \pseudo$. By a theorem of M. Bell, this is equivalent to assuming $\mask$, Martin's Axiom at $\k$ for $\s$-centered posets. We may (and do) assume that $X \sub [0,1]^\k$.

We will use $\mask$ to construct a sequence of points in $[0,1]^\k$ that is eventually compliant with every nice open cover of $X$.

Recall that $[0,1]^\k$ is separable, and fix a countable dense $D \sub [0,1]^\k$. Let us assume that $X$ is nowhere dense in $[0,1]^\k$ and that $X \cap D = \0$. This assumption does not sacrifice any generality, since we could always just replace $[0,1]^\k$ with $[0,1] \times [0,1]^\k$, identify $X$ with $\{0\} \times X$, and replace $D$ with $(\Q \cap (0,1]) \times D$.

Fix $x \in X$. Let $\p$ be the set of all pairs $\<s,\U\>$, such that $s$ is a sequence of distinct points in $D$ and $\U$ is a nice open cover of $X$. Order $\p$ by defining $\<t,\V\> \leq \<s,\U\>$ if and only if
\begin{itemize}
\item $s$ is an initial segment of $t$.
\item $\V$ refines $\U$.
\item either $t = s$, or $t-s$ is a $\U$-compliant $x$-loop.
\end{itemize}
Ultimately, we will use $\mask$ to obtain a suitably generic $G \sub \p$, and then $\g = \bigcup \set{s}{\<s,\U\> \in G}$ will be the desired sequence of points. Roughly, a condition $\<s,\U\>$ is a promise that $s$ is an initial segment of $\g$, and that the part of $\g$ after $s$ will decompose into $\U$-compliant $x$-loops.

$\p$ is clearly reflexive, and if $\<u,\mathcal W\> \leq \<t,\V\> \leq \<s,\U\>$ then $\<u,\mathcal W\> \leq \<s,\U\>$ by Lemma~\ref{lem:looprefinement}. Thus $\p$ is a pre-order, and it makes sense to talk about forcing with $\p$.

Because $D$ is countable, there are only countably many possibilities for the first coordinate of a condition in $\p$. To show that $\p$ is $\s$-centered, it suffices to show that if two conditions $\<s,\U\>$, $\<s,\V\>$ have the same first coordinate $s$, then they have a common extension. Taking $\mathcal W$ to be any nice open cover of $X$ that refines both $\U$ and $\V$ (for example $\mathcal W = \set{U \cap V}{U \in \U, V \in \V, \text{ and } U \cap V \cap X \neq \0}$), then $\<s,\mathcal W\> \leq \<s,\U\>$ and $\<s,\mathcal W\> \leq \<s,\V\>$. Thus $\p$ is $\s$-centered.

If $\U$ is a nice open cover of $X$, define
$$D_{\U} = \set{\<s,\V\> \in \p}{\V \text{ refines } \U}.$$
We claim that $D_\U$ is dense in $\p$. To see this, fix a nice open cover $\U$ of $X$ and let $\<s,\V\> \in \p$. Clearly $\<s,\U\> \in \p$, and we have already seen (in the previous paragraph) that any two conditions in $\p$ with the same first coordinate have a common extension. This common extension is in $D_{\U}$ and below $\<s,\V\>$, so $D_\U$ is dense in $\p$.

By $\mask$, there is a filter $G$ on $\p$ such that $D_{\U} \cap G \neq \0$ for every nice open cover $\U$ of $X$. Let $\g = \bigcup \set{s}{\<s,\U\> \in G}$. For any nice open cover $\U$ of $X$, $\g$ is eventually compliant with $\U$ precisely because $G \cap D_{\U} \neq \0$. An application of Lemma~\ref{lem:mainlemma} completes the proof.
\end{proof}

A topic left open by Theorems \ref{thm:main} and \ref{thm:MA} is how to construct quotients or isomorphisms from $(\w^*,\s)$ to dynamical systems of weight $\continuum$ when $\ch$ fails. The following question is a particularly interesting possibility related to the Katowice problem:

\begin{question}
Is it consistent to have a weakly incompressible autohomeomorphism of $\w_1^*$?
\end{question}

If $F$ were such a map, then $F$ cannot be trivial on any set of the form $A^*$, with $A$ co-countable. It is consistent that no such map exists, but it is not currently known whether the opposite is also consistent. See \cite{L&M} for some discussion of this problem and related results.

\end{document}